\documentclass[11pt,letterpaper]{article}

\usepackage{amsmath,amssymb,amsthm}
\usepackage{MnSymbol} 
\usepackage{graphicx,color}
\usepackage[hyphens]{url}
\usepackage{dsfont}
\usepackage{booktabs}
\usepackage[normalem]{ulem}
\usepackage{mathtools}
\usepackage[hidelinks,colorlinks=true,linkcolor=blue,citecolor=blue]{hyperref}
\usepackage[nameinlink,capitalize]{cleveref}
\usepackage{multirow}
\usepackage{algorithm}
\usepackage[noend]{algpseudocode}
\usepackage{caption}
\usepackage{subcaption}
\usepackage{enumitem}
\usepackage{soul} 
\usepackage[square,sort,numbers]{natbib}
\usepackage[sort,nocompress,noadjust]{cite}

\usepackage{tabularx}
\newcolumntype{L}{>{\raggedright\arraybackslash}X}

\captionsetup[figure]{name={\footnotesize \textsc{Figure}}}
\captionsetup[table]{name={\footnotesize \textsc{Table}}}

\numberwithin{equation}{section}
\newtheorem{theorem}{Theorem}[section]
\newtheorem{cor}[theorem]{Corollary}
\newtheorem{lemma}[theorem]{Lemma}
\newtheorem{remark}[theorem]{Remark}
\newtheorem{fact}[theorem]{Fact}

\newtheorem{obs}[theorem]{Observation}

\newtheorem{example}[theorem]{Example}

\newtheorem{defin}[theorem]{Definition}

\newcommand{\cF}{\mathcal{F}}

\newcommand{\cM}{\mathcal{M}}

\newcommand{\cQ}{\mathcal{Q}}
\newcommand{\cR}{\mathcal{R}}

\newcommand{\R}{\mathbb{R}}
\newcommand{\C}{\mathbb{C}}

\newcommand{\Prob}{\mathbb{P}}
\newcommand*{\E}{\mathbb{E}}
\newcommand{\supp}{\operatorname{supp}}

\newcommand*{\eps}{\varepsilon}

\newcommand*{\kap}{\kappa}

\renewcommand{\leq}{\leqslant}
\renewcommand{\geq}{\geqslant}

\providecommand{\abs}[1]{\left\lvert#1\right\rvert}

\DeclareMathOperator{\Racc}{\mathrm{R_{acc}}}  
\DeclareMathOperator{\Cov}{\mathrm{Cov}}
\DeclareMathOperator{\Var}{\mathrm{Var}}
\DeclareMathOperator*{\asc}{\overset{\text{a.s.}}{\longrightarrow}}
\DeclareMathOperator*{\ase}{\overset{\text{a.s.}}{=}}

\newcommand{\Arc}[2]{\mathrm{Arcsine}(#1,#2)}
\DeclareMathOperator{\ArcmM}{{\mathrm{Arcsine}}(m,M)}
\DeclareMathOperator{\ArcmMt}{{\mathrm{Arcsine}}(\tilde{m},\tilde{M})}

\DeclareMathOperator{\FlippedArcmM}{\mathrm{FlippedArcsine}(m,M)}
\DeclareMathOperator{\FlippedArcmMt}{\mathrm{FlippedArcsine}(\tilde{m},\tilde{M})}

\newcommand{\limn}{\lim_{n \to \infty}}
\DeclareMathOperator{\calM}{\cM}

% ===== Simple color-coded editing macros =====

\usepackage[margin=1in]{geometry}

\makeatletter
\def\blfootnote{\gdef\@thefnmark{}\@footnotetext}
\makeatother

\begin{document}

	\title{Acceleration by Random Stepsizes:  \\ Hedging, Equalization, and the Arcsine Stepsize Schedule} 
	
	\author{
		Jason M. Altschuler
		\\	UPenn \\	\texttt{alts@upenn.edu}
		\and
		Pablo A. Parrilo \\
		LIDS - MIT \\	\texttt{parrilo@mit.edu}
	}
	% \date{}
	\maketitle

    \blfootnote{This result first appeared in Chapter 6 of the 2018 MS Thesis~\citep{altschuler2018greed} of the first author (advised by the second author). This was the first convergence rate for any stepsize schedule (deterministic or randomized) that improved asymptotically over the standard unaccelerated rate for constant-stepsize GD, in any setting beyond quadratics.\label{fn:history}}

\begin{abstract}
    We show that for separable convex optimization, random stepsizes fully accelerate Gradient Descent. Specifically, using inverse stepsizes i.i.d.\ from the Arcsine distribution improves the convergence rate from $O(\kappa)$ to $O(\kappa^{1/2})$, where $\kappa$ is the condition number. No momentum or other algorithmic modifications are required. Our starting point is a remarkable ``equalization property'' of the Arcsine distribution: it yields an identical convergence rate for all quadratic functions. A key technical insight is that martingale arguments extend this phenomenon to all separable convex functions. We interpret this equalization as an extreme form of hedging: by using this random distribution over stepsizes, Gradient Descent converges at exactly the same rate for all functions in the function class.
\end{abstract}

%	\setcounter{tocdepth}{2}
%	\tableofcontents

\section{Introduction}\label{sec:intro} 

It is classically known that by using certain deterministic stepsize schedules $\{\alpha_t\}$, the Gradient Descent algorithm (GD)
\begin{align}
	x_{t+1} = x_t - \alpha_t \nabla f(x_t), \qquad t=0,1,2,\dots
	\label{eq-intro:gd}
\end{align}
solves convex optimization problems $\min_{x \in \R^d} f(x)$ to arbitrary precision from an arbitrary initialization~$x_0$. This result can be found in any textbook on convex optimization, e.g.,~\citep{boyd,bubeck-book,nesterov-survey,polyakbook,hazan2016introduction,luenberger1984linear,BertsekasNonlinear,lan2020first}. The present paper investigates the following question:
\begin{align}
	\text{Is it possible to accelerate the convergence of GD by using \emph{random} stepsizes?}
	\label{eq-intro:motivation}
\end{align}
This is different from the traditional approach for  obtaining accelerated convergence, which modifies the GD algorithm beyond just stepsizes (e.g., using momentum~\citep{nesterov-agd}); see the survey~\citep{d2021acceleration}. 

Our main result is that for separable convex optimization, the answer is yes. Specifically, we provide an i.i.d.\ stepsize schedule that makes GD converge at the optimal accelerated rate. See \S\ref{ssec:intro:cont} for a precise statement; here we first contextualize with the relevant literature on random stepsizes.

\paragraph*{The special case of quadratics.} This question is understood (only) for optimizing convex \emph{quadratic} functions. In particular, the line of work~\citep{pronzato11,pronzato13,kalousek} shows that for quadratic functions~$f$ that are $m$-strongly convex and $M$-smooth (i.e., $m I \preceq \nabla^2 f \preceq MI$), there is a unique optimal distribution for i.i.d.\ stepsizes that makes GD converge at a rate of
\begin{align}
	\left( \frac{\|x_n - x^*\|}{\|x_0 - x^*\|} \right)^{1/n}  \longrightarrow\; \frac{\sqrt{\kappa} - 1}{\sqrt{\kappa} + 1}\,,
	\label{eq-intro:rate}
\end{align}
where $x^*$ denotes the minimizer of $f$, $\kappa := \tfrac{M}{m}$ denotes the condition number, and this convergence is in an appropriate probabilistic sense (e.g., almost-sure or $L^1$). 
This stepsize distribution is most naturally stated in terms of the inverse stepsizes
$\beta_t := 1/\alpha_t$ having the Arcsine distribution on the interval $(m,M)$, i.e., 
density
\begin{align}
	\frac{d \mu}{d\beta} = \frac{\mathds{1}_{\beta \in (m,M)}}{\pi \sqrt{(M-\beta)(\beta-m)}}  \,.
	\label{eq-intro:arcsine}
\end{align}
Briefly, this Arcsine distribution arises naturally for the special case of quadratics because: (1) the convergence rate is invariant w.r.t.\ the stepsize order and thus depends only on the empirical measure of the stepsizes; and (2) the Arcsine distribution is the limit as $n \to \infty$ of the empirical measure of the optimal $n$ (deterministic) stepsizes for GD~\citep{young53}. See \S\ref{ssec:hi:quad} for details.

The resulting convergence rate~\eqref{eq-intro:rate} requires roughly $\Theta(\sqrt{\kappa} \log \tfrac{1}{\eps})$ iterations to produce an $\eps$-accurate solution. Such a rate is called ``accelerated'' because its dependence on $\kappa$ improves over the standard $\Theta(\kappa \log \tfrac{1}{\eps})$ convergence rate required by GD with constant stepsizes. Moreover, this convergence rate $\Theta(\sqrt{\kappa} \log \tfrac{1}{\eps})$ is information-theoretically optimal for any first-order algorithm~\citep{nesterov-survey,nem-yudin}, and matches the worst-case convergence rate of accelerated algorithms that change GD beyond just its stepsizes, e.g., the Conjugate Gradient method~\citep{hestenes1952methods}, Polyak's Heavy Ball method~\citep{polyak1964some}, Nesterov-style momentum methods~\citep{nesterov-agd}, and more~\citep{d2021acceleration}. 
% See Table~\ref{tab:comparison}, left.

\begin{figure}
	\centering
	\includegraphics[width=0.6\linewidth]{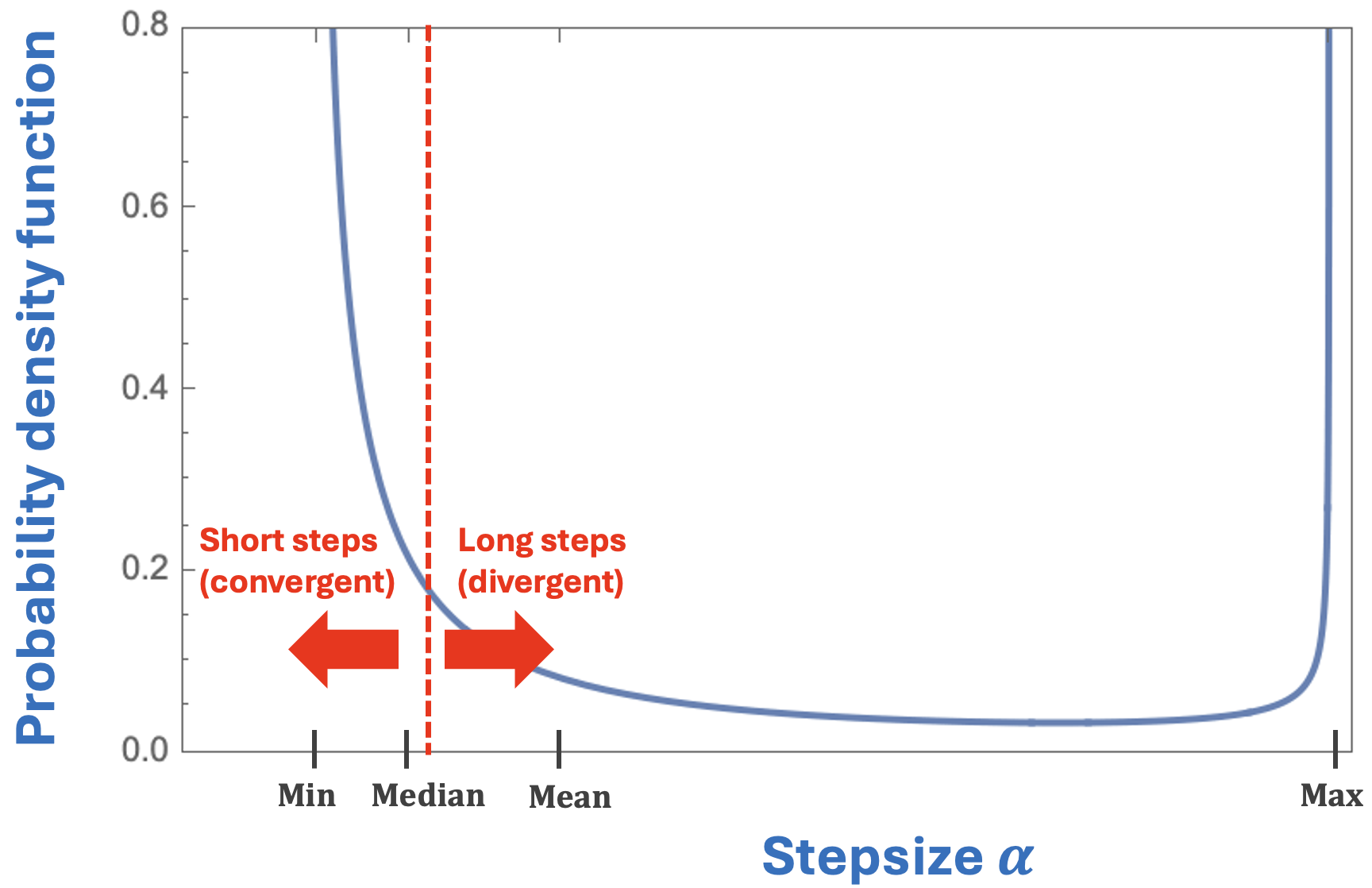}
	\caption{\footnotesize The induced distribution of stepsizes $\alpha$, for inverse stepsizes $\alpha^{-1}$ taken from the Arcsine distribution on $(m,M)$. 
			The minimum stepsize is $1/M$, the maximum is $1/m$, and the median is $2/(M+m)$, which is the optimal value for constant stepsize schedules. For constant stepsize schedules, the dashed red line $\bar{\alpha} = 2/M$ is the threshold for convergence; larger stepsizes $\alpha$ lead to divergence, and shorter stepsizes $\alpha$ lead to slow convergence. 
			This distribution optimally \emph{hedges} between short and long steps. Note that the mean stepsize is $1/\sqrt{Mm}$ which is larger than the divergence threshold for constant stepsize schedules, by a factor of $\Theta(\sqrt{\kappa})$. This plot sets $\kappa = 10$, $m=1/\kappa$, $M=1$; the discrepancy between this distribution and standard constant stepsizes is even more dramatic for larger $\kappa$.
		 }
	\label{fig:distribution}
\end{figure}

\paragraph*{Beyond quadratics?} For general convex optimization, momentum-based algorithms still achieve these optimally accelerated rates $\Theta(\sqrt{\kappa} \log \tfrac{1}{\eps})$~\citep{nesterov-agd}. However, existing acceleration results for random stepsizes do not extend beyond the special case of quadratic functions $f$---otherwise $\nabla f$ is non-linear, making GD a non-linear map, which completely breaks all previous analyses. This failure of the analyses is due to fundamental differences between quadratic optimization and convex optimization:
\begin{itemize}
	\item The optimal deterministic stepsizes for quadratic optimization~\citep{young53} (which the random stepsizes are built upon) can lead to divergent behavior for convex optimization~\citep{altschuler2018greed}.
	\item The convergence rate is invariant w.r.t.\ the stepsize order for quadratic optimization (which motivates the opportunity for random stepsizes), but this is false for convex optimization~\citep{altschuler2018greed}.
	\item Using time-varying stepsizes is equivalent to using momentum for quadratic optimization (which enables GD to achieve accelerated rates), but this is false for convex optimization~\citep{d2021acceleration}.
\end{itemize}
Due to these challenges, it remained unclear whether random stepsizes could lead to the accelerated rate $\Theta(\sqrt{\kappa} \log \tfrac{1}{\eps})$ for convex optimization. In fact, it was unknown even if random stepsizes could provide an arbitrary small improvement over the $\Theta(\kappa \log \tfrac{1}{\eps})$ rate of constant-stepsize GD in any setting beyond quadratic optimization.

\subsection{Contribution}\label{ssec:intro:cont}

This paper shows that random stepsizes provably accelerate GD beyond the setting of quadratics. Specifically, our main result (Theorem~\ref{thm:sep:main}) shows that GD converges at the optimally accelerated rate for separable convex functions. This separability property is defined below and, for $C^2$ functions~$f$, is equivalent to commutativity of the Hessians $\{\nabla^2 f(x)\}_{x \in \R^d}$, see Appendix~\ref{app:sep-commute}.

\begin{defin}[Separable functions]\label{def:sep}
	A function $f : \R^d \to \R$ is \emph{separable} if it admits a decomposition of the form
	\begin{align}
		f(x) = \sum_{i=1}^d f_i([Ux]_i)
		\label{eq:def-separability}
	\end{align} 
	for some orthogonal matrix $U \in \R^{d \times d}$ and some functions $f_1, \dots, f_d : \R \to \R$.
\end{defin}

\begin{theorem}[Random stepsizes accelerate GD for separable convex optimization]\label{thm:sep:main}
	Consider any dimension $d$, any separable function $f : \R^d \to \R$ that is $m$-strongly convex and $M$-smooth, and any initialization point $x_0$ that is not equal to the minimizer $x^*$ of $f$. By using i.i.d.\ inverse stepsizes $\alpha_t^{-1}$ from the Arcsine distribution~\eqref{eq-intro:arcsine}, GD converges at the rate
	\begin{align}
		\left( \frac{\|x_n - x^*\|}{\|x_0 - x^*\|} \right)^{1/n} \longrightarrow \;\; \frac{\sqrt{\kappa} - 1}{\sqrt{\kappa} + 1}\,,
	\end{align}
	where this convergence is in the almost sure and $L^1$ sense. Moreover, this is the unique distribution for which GD achieves this optimal rate. 
\end{theorem}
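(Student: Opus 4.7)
The plan is to exploit separability to decouple GD into $d$ independent one-dimensional dynamics sharing a common random stepsize, convert each update into a multiplicative contraction via the mean value theorem, and then apply the equalization property of the Arcsine distribution together with a martingale law of large numbers to control the resulting products.

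First I would apply the orthogonal change of variables $y = Ux$ provided by Definition~\ref{def:sep}. Since GD, the Euclidean norm, and the smoothness/strong-convexity constants are all equivariant under orthogonal transformations, this reduces the problem to $f(x) = \sum_{i=1}^d f_i(x_i)$ with $m \leq f_i'' \leq M$ for every $i$. GD then decouples coordinatewise, and for the error $e_{t,i} := x_{t,i} - x^*_i$ the mean value theorem produces $f_i'(x_{t,i}) = \beta_{t,i} e_{t,i}$ with $\beta_{t,i} \in [m,M]$ measurable with respect to $\cF_{t-1} := \sigma(\alpha_0,\dots,\alpha_{t-1})$. Telescoping,
\begin{align*}
\log |e_{n,i}| - \log|e_{0,i}| \;=\; \sum_{t=0}^{n-1} \log|1-\alpha_t \beta_{t,i}|.
\end{align*}

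The crux is the equalization property of the Arcsine distribution, which asserts that for every $\beta \in [m,M]$
\begin{align*}
\Phi(\beta) \;:=\; \E_{\alpha^{-1} \sim \mathrm{Arcsine}(m,M)}\bigl[\log|1-\alpha\beta|\bigr] \;=\; \log r, \qquad r \;:=\; \tfrac{\sqrt\kappa - 1}{\sqrt\kappa + 1},
\end{align*}
reflecting the fact (highlighted in the introduction) that the Arcsine measure is the logarithmic equilibrium measure of $[m,M]$. Because $\alpha_t$ is independent of $\cF_{t-1}$ and $\beta_{t,i}$ is $\cF_{t-1}$-measurable with values in $[m,M]$, equalization gives $\E[\log|1-\alpha_t\beta_{t,i}|\mid \cF_{t-1}] = \log r$, so the centered sum $\sum_{t=0}^{n-1}(\log|1-\alpha_t\beta_{t,i}| - \log r)$ is a martingale. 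Uniform boundedness of $\E[\log^2|1-\alpha\beta|]$ over $\beta \in [m,M]$ (the only singularity of the integrand, at $\alpha = 1/\beta$, is a square-integrable $\log$-type one, and continuity in $\beta$ on the compact interval $[m,M]$ upgrades this to a uniform bound) then lets a standard $L^2$ martingale SLLN conclude $\tfrac{1}{n}\log|e_{n,i}/e_{0,i}| \to \log r$ almost surely and in $L^1$. Since $\|x_n-x^*\|^2 = \sum_i e_{n,i}^2$ is a finite sum in which every coordinate with $e_{0,i} \neq 0$ contracts at the same rate, the claim $\tfrac{1}{n}\log \|x_n - x^*\| \to \log r$ follows.

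For uniqueness I would specialize to the one-dimensional quadratics $f(x) = \tfrac{\beta}{2}(x-x^*)^2$ with $\beta \in [m,M]$, which are trivially separable elements of the function class. For these the analysis above is exact, so any i.i.d.\ inverse-stepsize distribution $\mu$ delivers asymptotic rate $\Phi_\mu(\beta)$; requiring this to be $\leq \log r$ for every $\beta \in [m,M]$ forces $\sup_\beta \Phi_\mu(\beta) \leq \log r$, and the classical uniqueness of logarithmic equilibrium measures pins $\mu$ to the Arcsine distribution~\eqref{eq-intro:arcsine}. The main obstacle throughout is the martingale step: the contraction factor $\beta_{t,i}$ depends on the whole random trajectory, so no i.i.d.\ SLLN applies directly. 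Equalization is precisely the miracle that saves the day --- it collapses the conditional expectation to a single constant independent of $\beta_{t,i}$, reducing everything else to routine second-moment bookkeeping.
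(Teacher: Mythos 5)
Your proposal follows essentially the same route as the paper: reduce by separability to $d$ decoupled scalar dynamics, write the per-coordinate update as a multiplicative contraction with an adapted ratio in $[m,M]$, invoke the Arcsine equalization property to make the conditional expectations constant, and close with a martingale strong law (the paper does this via Kronecker's lemma plus the $L^1$-bounded martingale convergence theorem, but that is the same device as your $L^2$ martingale SLLN). The only small caveat is that your invocation of the mean value theorem with ``$m \leq f_i'' \leq M$'' implicitly assumes $f$ is $C^2$, which the theorem does not; the paper's Lemma~\ref{lem:sep:uni:helper} derives the bound $f_i'(x)/(x-x^*) \in [m,M]$ directly from strong convexity and smoothness without differentiating twice, and you would want to substitute that argument.
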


\begin{table}[t]
	\footnotesize
	\centering
	\begin{tabular}{|c|c|c|c|}
		\hline
		\textbf{Stepsizes$\;\;$\textbackslash$\;\;$Function class} & \textbf{Quadratic }        & \textbf{Separable}                  & \textbf{Convex}                              \\ \hline
		\textbf{Constant}        & $\Theta(\kappa)$ folklore   & $\Theta(\kappa)$ folklore  & $\Theta(\kappa)$ folklore    \\
		\textbf{Deterministic (Chebyshev/Silver)}   & $\Theta(\kappa^{1/2})$~\citep{young53} & $O(\kappa^{\log_{1+\sqrt{2}} 2})$~\citep{alt23hedging1}  & $O(\kappa^{\log_{1+\sqrt{2}} 2})$~\citep{alt23hedging1}   \\
		\textbf{Random (Arcsine)}    & $\Theta(\kappa^{1/2})$~\citep{kalousek,pronzato11,pronzato13} & \color{blue}{ $\Theta(
			\kappa^{1/2})$ Theorem~\ref{thm:sep:main}} & unknown if randomness helps
			\\ \hline
	\end{tabular}
    
	\caption{\footnotesize{Convergence rates for minimizing $\kappa$-conditioned functions using GD~\eqref{eq-intro:gd} with different stepsize choices. For brevity, we omit the dependence on the accuracy as it is always $\log 1/\eps$.
    The main result is in blue: for separable convex functions, random stepsizes enable accelerated convergence at the rate $\kappa^{1/2}$, which is known to be information-theoretically optimal~\citep{nem-yudin}. 
    The asymptotic notation $O$ refers to an upper bound; $\Theta$ refers to matching upper and lower bounds. See the future work in \S\ref{sec:future} for conjectures motivated by our result, about potential gaps between deterministic and randomized stepsizes for GD, and gaps between separable and general convex optimization for GD.
	}}
	\label{tab:comparison}
\end{table}

\subsection{Discussion}

\paragraph*{Optimality.} The rate in Theorem~\ref{thm:sep:main} is optimal since there is an exactly matching lower bound: there is an explicit $\kappa$-conditioned quadratic function for which any (Krylov) first-order algorithm cannot contract to the optimum faster than this rate $\tfrac{\sqrt{\kappa}-1}{\sqrt{\kappa}+1}$~\citep[Theorem 2.1.12]{nesterov-survey}.

\paragraph*{Relation with other stepsize schedules.} 
Here we provide a brief historical perspective to contextualize the random stepsizes studied in this paper.
As mentioned earlier, the classic result of~\citep{young53} showed that GD can be accelerated for the special case of quadratics by using deterministic stepsizes related to Chebyshev polynomials. Since then, many alternative stepsize schedules have been investigated, some adaptive, e.g., exact line search~\citep{polyakbook}, Armijo-Goldstein schedules~\citep{nesterov-survey}, Polyak-type schedules~\citep{polyakbook}, and Barzilai-Borwein-type schedules~\citep{barzilai1988two}. While some of these schedules can be quite effective in practice, none has led to an analysis for any setting beyond quadratics that outperforms the ``unaccelerated'' contraction factor 
% $(\|x_n - x^*\|/\|x_0 - x^*\|)^{1/n} \leq (\kappa-1)/(\kappa+1)$
$
(\tfrac{\|x_n - x^*\|}{\|x_0 - x^*\|})^{1/n} \leq \tfrac{\kappa - 1}{\kappa + 1}$
of constant stepsize GD, which corresponds to a convergence rate of $\Theta(\kappa \log \tfrac{1}{\eps})$. Starting with~\citep{altschuler2018greed}, a recent line of work has shown partially accelerated convergence rates for general convex optimization using deterministic stepsize schedules suggested by semidefinite-programming-based estimates of convergence rates~\citep{altschuler2018greed,daccache2019performance,eloi2022worst,gupta22,grimmer23,alt23hedging1,alt23hedging2,GrimmerShuWang,GSW24,wang2024relaxed,zhang2024accelerated,grimmer2024composing,bok24,bok2025optimized}, see the recent survey~\citep{altpar26expository}. In particular,~\citep{alt23hedging1} showed the silver convergence rate
$O(\kappa^{\log_{1 + \sqrt{2}} 2} \log \tfrac{1}{\eps}) \approx O(\kappa^{0.78} \log \tfrac{1}{\eps})$ by using a certain fractal-like deterministic stepsize schedule called the silver stepsizes. This rate is conjecturally optimal among all deterministic stepsize schedules~\citep{alt23hedging1}. 
Theorem~\ref{thm:sep:main} is incomparable: it shows a fully accelerated rate $O(\kappa^{1/2} \log \tfrac{1}{\eps})$, but under the additional assumption of separability. See Table~\ref{tab:comparison}. This incomparability raises interesting questions about the possible benefit of randomization for GD, see the discussion of future work in~\S\ref{sec:future}.

\paragraph*{Generality.} The phenomenon of acceleration via random stepsizes in Theorem~\ref{thm:sep:main} extends to other structural assumptions on $f$ and other notions of convergence (see \S\ref{sec:extensions}), and to settings where GD is performed inexactly (see \S\ref{ssec:discussion:inexact}). Note also that since GD does not depend on the choice of the basis $U$ in the separable decomposition~\eqref{eq:def-separability}, Theorem~\ref{thm:sep:main} only requires the existence of some such $U$ (which is the definition of separability). In particular, the algorithm need not know $U$.

\paragraph*{Variability.}
% for randomized stepsizes.} 
Randomized algorithms have random trajectories.
% \delete{This provides potential opportunities for running multiple times in parallel and selecting the best run, see \S\ref{ssec:discussion:parallel}.}
The best run can be significantly better than the typical run (whose rate converges almost surely to the accelerated rate, see \S\ref{ssec:discussion:hp}) and the worst run (whose rate can diverge but only with exponentially small probability, see \S\ref{ssec:discussion:notions}).

\paragraph*{Game-theoretic connections and lower bounds.} This paper considers random stepsizes in order to accelerate GD. This randomized perspective is also helpful for the dual problem of constructing lower bounds, i.e., exhibiting functions for which GD converges slowly. In particular, lifting to \emph{distributions} over functions leads to a game between an algorithm (who chooses a distribution over stepsizes) and an adversary (who chooses a distribution over functions) which is symmetric in that the Arcsine distribution is the optimal answer for both. This leads to new perspectives on classical lower bounds for GD; details in \S\ref{ssec:discussion:lb}.

\paragraph*{The equalization property and logarithmic potential theory.} A distinguishing feature of our analysis is a connection to potential theory: the variational characterization for the optimal stepsize distribution mirrors the variational characterization for a certain equilibrium distribution which configures a unit of charged particles so as to minimize the logarithmic potential energy. The optimal distribution satisfies a remarkable ``equalization property'' which in the physical context amounts to a constant potential over space, and in the optimization context amounts to an identical convergence rate over all quadratic functions. A key technical insight is that martingale arguments extend this phenomenon to all separable convex functions. Links between Krylov-subspace algorithms and potential theory are classical~\citep{6steps}; we point out that these connections extend beyond quadratic objectives and to random stepsizes. 
See \S\ref{sec:hi} for a conceptual overview of our analysis techniques and how we exploit this equalization property to prove Theorem~\ref{thm:sep:main}.

\subsection{Organization}

\S\ref{sec:hi} provides a conceptual overview of our analysis approach.~\S\ref{sec:separable} proves our main result, Theorem~\ref{thm:sep:main}.~\S\ref{sec:extensions} discusses the generality of our result.~\S\ref{sec:discussion} provides auxiliary results about notions of convergence, stability, and game-theoretic interpretations.~\S\ref{sec:future} describes several directions for future work that are motivated by our results. For brevity, several proofs are deferred to the appendix.

\section{Conceptual overview: acceleration by random stepsizes}\label{sec:hi}

Here we provide informal derivations of our main result in order to overview the key conceptual ideas. We begin by briefly recalling in \S\ref{ssec:hi:quad} how random stepsizes can accelerate GD for the special case of quadratic optimization. Then in \S\ref{ssec:hi:convex}, we describe the challenges for acceleration beyond quadratics, and our new techniques for circumventing these challenges.

\subsection{Quadratic optimization}\label{ssec:hi:quad}

Consider running GD on the class $\cQ_{m,M}$ of quadratic functions $f$ that are $m$-strongly convex and $M$-smooth. 
% (To avoid notational overhead, henceforth we simply write $\cQ$ for $\cQ_{m,M}$).
What is the optimal distribution from which to draw i.i.d.\ stepsizes? In order to understand this, we first recall the optimal deterministic stepsizes. 

\paragraph*{Optimal deterministic stepsizes.} Without loss of generality after translating the solution $x^* = 0$, we have
\begin{align}
	f(x) = \frac{1}{2} x^T H x\, \qquad \text{ where } \qquad mI \preceq H \preceq MI\,.
	\label{eq:hi-quad:obj}
\end{align}
Since the function is quadratic, its gradient is linear $\nabla f(x) = Hx$, hence GD is a linear map
\begin{align}
	x_{t+1} = x_t - \alpha_t \nabla f(x_t) = (I - \alpha_t H) x_t\,,
	\label{eq:hi-quad:alg}
\end{align}
and therefore the final iterate is
\begin{align}
	x_n = \prod_{t=0}^{n-1} (I - \alpha_t H) x_0\,.
	\label{eq:hi-quad:poly}
\end{align}
The convergence rate for a given stepsize schedule---in the worst case over objective functions in $\cQ_{m,M}$ and initializations $x_0$---is therefore
\begin{align}
	\sup_{f \in \cQ_{m,M}, \, x_0 \neq x^*}
	\left( \frac{\|x_n - x^*\|}{\|x_0 - x^*\|} \right)^{1/n}
	&=
	\sup_{mI \preceq H \preceq MI, \, x_0 \neq x^*}
	\left( \frac{\|\prod_{t=0}^{n-1} (I - \alpha_t H) x_0\|}{\|x_0\|}  \right)^{1/n} \nonumber \\
	&=
	\sup_{mI \preceq H \preceq MI}
	\| \prod_{t=0}^{n-1} (I - \alpha_t H) \|^{1/n}
	\label{eq:hi-quad:rate-mid}
	\\
	&=
	\sup_{\lambda \in [m,M]} \prod_{t=0}^{n-1} |1 - \alpha_t \lambda|^{1/n}\,.
	\label{eq:hi-quad:rate}
\end{align}
Above, the penultimate step is by definition of the spectral norm, and the final step is by diagonalizing.
It is a simple and celebrated fact, dating back to Young in 1953~\citep{young53}, that minimizing this convergence rate~\eqref{eq:hi-quad:rate} over stepsizes is equivalent to a certain extremal polynomial problem, and that as a consequence, the optimal stepsizes minimizing~\eqref{eq:hi-quad:rate} are the inverses of the roots of a (translated and scaled) Chebyshev polynomial, in any order.
Said more precisely: for any horizon length $n$, the optimal $n$ stepsizes are any permutation of 
\begin{align}
	\left\{ \left( \frac{M+m}{2} + \frac{M-m}{2} \cos \left( \frac{2t+1}{2n}\pi \right) \right)^{-1} \right\}_{t=0}^{n-1}
	\label{eq:quad-cheb}
\end{align}
and the corresponding $n$-step convergence rate in~\eqref{eq:hi-quad:rate} is
\begin{align}
	\sup_{f \in \cQ_{m,M}, \, x_0 \neq x^*}
	\left( \frac{\|x_n - x^*\|}{\|x_0 - x^*\|} \right)^{1/n}
	=
    \left[ 2\frac{(\sqrt{\kappa} + 1)^n(\sqrt{\kappa} - 1)^n}{(\sqrt{\kappa} + 1)^{2n} + (\sqrt{\kappa} - 1)^{2n}} \right]^{1/n}
\overset{n \to \infty}{\longrightarrow}
\frac{\sqrt{\kappa} - 1}{\sqrt{\kappa} + 1}\,.
	\label{eq:quad-rate}
\end{align}

\paragraph*{Optimal random stepsizes.} Much less well-known is the fact that this accelerated rate 
can also be asymptotically achieved by simply using i.i.d.\ stepsizes from an  appropriate distribution~\citep{kalousek,pronzato11,pronzato13}. 
The intuition is that the convergence rate~\eqref{eq:hi-quad:rate} is invariant in the order of the stepsizes, instead depending only on their empirical measure, and as $n \to \infty$ one can achieve the same empirical measure (and thus the same convergence rate) by replacing the carefully chosen deterministic Chebyshev stepsize schedule~\eqref{eq:quad-cheb} with i.i.d.\ stepsizes from their limiting empirical measure. See~\citep[Chapter 3]{altschuler2018greed}
for a discussion of this through the lens of exchangeability and De Finetti's Theorem.

\par Concretely, if the inverse stepsizes $\beta_t := 1/\alpha_t$ are i.i.d.\ from some distribution $\mu$, then (forgoing measure-theoretic technicalities) the Law of Large Numbers ensures that the $n$-step convergence rate in~\eqref{eq:hi-quad:rate} converges almost surely and in expectation to
\begin{align}
	\sup_{f \in \cQ_{m,M}, \, x_0 \neq x^*}
	\left( \frac{\|x_n - x^*\|}{\|x_0 - x^*\|} \right)^{1/n}
	&=
	\sup_{\lambda \in [m,M]} \prod_{t=0}^{n-1} |1 - \alpha_t \lambda|^{1/n} \nonumber
	\\ &=
	\exp\left( \sup_{\lambda \in [m,M]} \frac{1}{n} \sum_{t=0}^{n-1} \log |1 - \beta_t^{-1} \lambda| \right) \nonumber
	\\ &\overset{n \to \infty}{\longrightarrow}
	\exp\left( \sup_{\lambda \in [m,M]} \E_{\beta \sim \mu}\log |1 - \beta^{-1} \lambda| \right) 
	\,
	\label{eq:quad-rand-rate}
\end{align}
This informal derivation suggests that the distribution $\mu$ minimizing the asymptotic convergence rate~\eqref{eq:quad-rand-rate} should be the appropriate limit as $n \to \infty$ of the empirical measure for the inverse Chebyshev stepsize schedule (this limit is the $\ArcmM$ distribution, since $a + b\cos (\Theta)$ has the Arcsine distribution on $[a-b,a+b]$ if $\Theta$ is uniformly distributed on $[0,\pi],$ see Figure~\ref{fig:cheby}), and moreover the optimal value should be the accelerated rate $\frac{\sqrt{\kappa}-1}{\sqrt{\kappa}+1}$. This intuition is formalized by the following lemma.

\begin{figure}
	\centering
	\begin{subfigure}{.32\textwidth}
		\centering
		\includegraphics[width=0.9\linewidth]{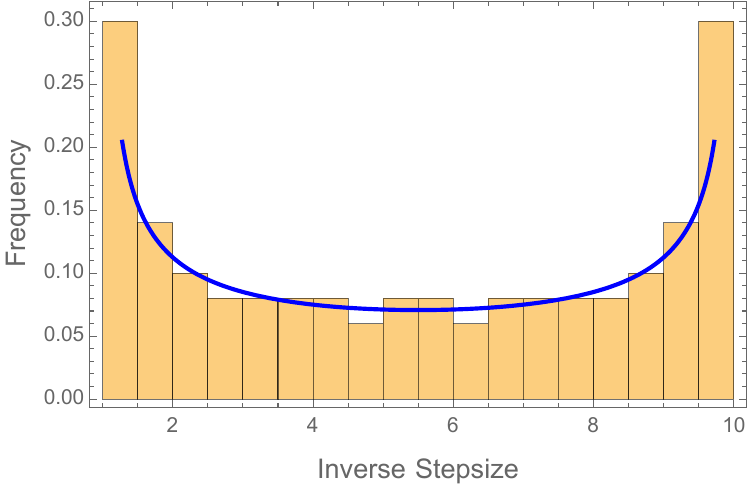}
		\caption{$n=10^2$}
		\label{fig:cheby1}
	\end{subfigure}
	\begin{subfigure}{.3\textwidth}
		\centering
		\includegraphics[width=0.9\linewidth]{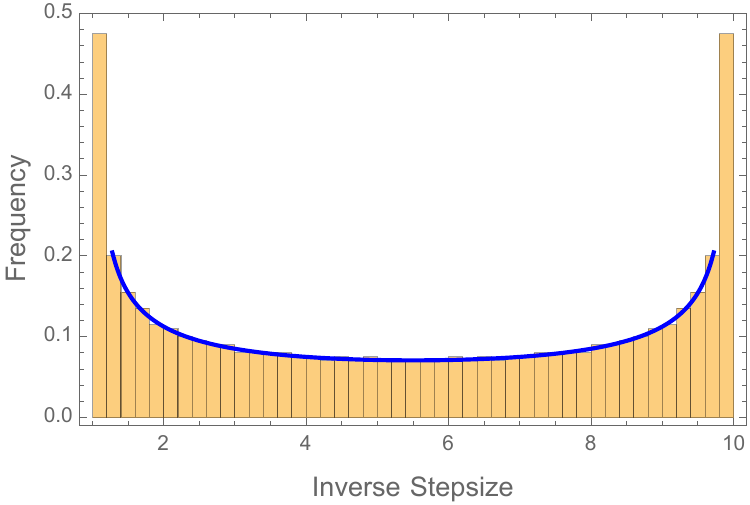}
		\caption{$n=10^3$}
		\label{fig:cheby2}
	\end{subfigure}
		\begin{subfigure}{.3\textwidth}
		\centering
		\includegraphics[width=0.9\linewidth]{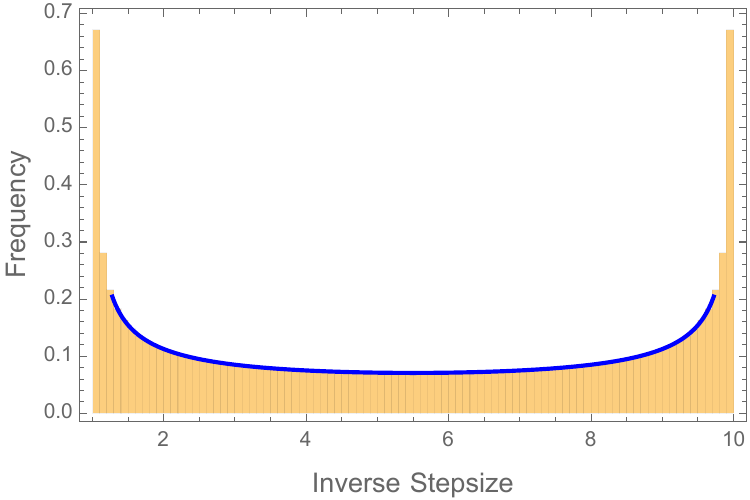}
		\caption{$n=10^4$}
		\label{fig:cheby3}
	\end{subfigure}
	\caption{\footnotesize As $n \to \infty$, the empirical distribution (yellow) of the $n$-step inverse Chebyshev stepsize schedule~\eqref{eq:quad-cheb} converges to the Arcsine distribution (blue). The fit is increasingly accurate as $n$ increases from $10^2$ (left) to $10^4$ (right). Demonstrated here for $m=1$ and $M=10$.}
	\label{fig:cheby}
\end{figure}

\begin{lemma}[Extremal property of the Arcsine distribution]\label{lem:arc-extremal}
	The extremal problem
	\begin{align}
		\inf_{\mu} \sup_{\lambda \in [m,M]} \E_{\beta \sim \mu} \log \abs{1 - \beta^{-1} \lambda }
		\label{eq:extremal}
	\end{align}
	has optimal value $\log \frac{\sqrt{\kappa} - 1}{\sqrt{\kappa} + 1}$, achieved by the unique optimal solution $\mu = \ArcmM$.
\end{lemma}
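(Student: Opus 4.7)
The plan is to establish the upper bound and lower bound (with matching value $c^* \defeq \log\tfrac{\sqrt{\kappa}-1}{\sqrt{\kappa}+1}$) separately via classical results from logarithmic potential theory, then deduce uniqueness from Frostman's theorem. For the upper bound, I would verify the key equalization property
\begin{align*}
\E_{\beta \sim \ArcmM}\log|1-\beta^{-1}\lambda| \;=\; c^* \qquad \text{for every } \lambda \in [m,M],
\end{align*}
which immediately yields $\sup_{\lambda\in[m,M]}\E_{\mu^*}\log|1-\beta^{-1}\lambda| = c^*$. The derivation uses the splitting $\log|1-\beta^{-1}\lambda|=\log|\beta-\lambda|-\log\beta$: the first term integrates to the Robin constant $\log\tfrac{M-m}{4}$ of the interval $[m,M]$ (by the equilibrium property of the Arcsine, whose logarithmic potential is constant on its support), while the second term evaluates to $\log\tfrac{(\sqrt{M}+\sqrt{m})^2}{4}$ by the classical external-point formula for the Arcsine's logarithmic potential at $z=0$. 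Subtracting the two produces exactly $c^*$.

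For the lower bound, I would dualize by introducing a measure $\nu^*$ on $[m,M]$ for the adversary, then apply the inequality $\sup_\lambda(\cdot) \geq \int (\cdot)\,d\nu^*$ together with Fubini. The correct candidate --- suggested by the involutive symmetry $\beta \leftrightarrow 1/\lambda$ of the kernel $\log|1-\beta^{-1}\lambda|$ --- is the measure $\nu^*$ defined by $1/\lambda \sim \mathrm{Arcsine}(1/M,1/m)$ when $\lambda \sim \nu^*$, with explicit density $\tfrac{\sqrt{Mm}}{\pi\lambda\sqrt{(M-\lambda)(\lambda-m)}}$ on $[m,M]$. Running the analogous equalization calculation --- this time with the Arcsine equilibrium measure on $[1/M,1/m]$ after substituting $\gamma = 1/\lambda$ --- I would show
\begin{align*}
g(\beta) \;\defeq\; \int \log|1-\beta^{-1}\lambda|\,d\nu^*(\lambda) \;=\; c^* \text{ for } \beta \in [m,M],\qquad g(\beta) > c^* \text{ for } \beta \notin [m,M],
\end{align*}
where the strict inequality off the support is Frostman's theorem. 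Then for any probability measure $\mu$ on $(0,\infty)$,
\begin{align*}
\sup_{\lambda \in [m,M]} \E_\mu\log|1-\beta^{-1}\lambda| \;\geq\; \int \E_\mu\log|1-\beta^{-1}\lambda|\,d\nu^*(\lambda) \;=\; \E_\mu[g(\beta)] \;\geq\; c^*.
\end{align*}

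For uniqueness, equality throughout the above chain forces both $\mathrm{supp}(\mu) \subseteq \{g = c^*\} = [m,M]$ and the map $\lambda \mapsto \E_\mu\log|1-\beta^{-1}\lambda|$ to be identically $c^*$ on $\mathrm{supp}(\nu^*) = [m,M]$. Via the same decomposition, the latter says the logarithmic potential of $\mu$ is constant on $[m,M]$, and Frostman's uniqueness theorem for the equilibrium measure on $[m,M]$ then pins down $\mu = \ArcmM$. The main technical obstacle is identifying the correct dual $\nu^*$ and verifying its equalization property; the equivalent Hilbert-transform identity $\mathrm{p.v.}\!\int\!\tfrac{d\nu^*(\lambda)}{\lambda-\beta} = -1/\beta$ for $\beta \in (m,M)$ can be verified via the partial-fraction decomposition $\tfrac{1}{\lambda(\lambda-\beta)} = \tfrac{1}{\beta}\bigl(\tfrac{1}{\lambda-\beta} - \tfrac{1}{\lambda}\bigr)$, reducing it to the classical vanishing of the Hilbert transform of the Arcsine density on its own support.
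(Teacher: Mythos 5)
Your proof is correct, but takes a genuinely different route from the paper's. You prove the lower bound $\inf_\mu \sup_\lambda (\cdot) \geq \log\Racc$ via a primal--dual certificate: you exhibit the measure $\nu^*$ (the $\FlippedArcmM$ distribution, i.e.\ $\lambda$ with $1/\lambda \sim \Arc{1/M}{1/m}$), verify that $g(\beta) := \int \log|1-\beta^{-1}\lambda|\,d\nu^*(\lambda) \geq \log\Racc$ with equality on $[m,M]$ (which follows from the equalization property of the Arcsine applied on the flipped interval $[1/M,1/m]$, via the kernel symmetry $\log|1-\beta^{-1}\lambda| = \log|1-\tilde\lambda^{-1}\tilde\beta|$ under $\tilde\beta = 1/\beta$, $\tilde\lambda = 1/\lambda$), and then Fubini gives $\sup_\lambda \E_\mu[\cdot] \geq \int \E_\mu[\cdot]\,d\nu^* = \E_\mu[g(\beta)] \geq \log\Racc$ for any $\mu$. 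The paper's rigorous proof of this lemma (Appendix~\ref{app:arc-extremal}) takes the complementary potential-theoretic route: after reducing to $\mu \in \cM([m,M])$ (Observation~\ref{obs:constrained}), it assumes $\nu$ is at least as good as $\ArcmM$, forms $f := \phi_{\ArcmM} - \phi_\nu$, and shows $f$ is constant on $[m,M]$ by the Principle of Domination, the Maximum Principle for harmonic functions, and semicontinuity of potentials, before invoking the converse to Frostman's theorem. Interestingly, your duality argument is precisely the one the paper presents \emph{heuristically} in \S\ref{app:pot-equalization-stepsize} and \S\ref{ssec:discussion:lb} (via the game with the adversary's mixed strategy $\nu^*=\FlippedArcmM$), but the paper's rigorous proof deliberately avoids it in favor of the domination argument; your version is a correct rigorous instantiation and avoids circularity because you establish the equalization of $\nu^*$ directly from Lemma~\ref{lem:arc-equalization} applied on the inverted interval, rather than appealing to the game-theoretic Lemma~\ref{lem:flipped-arc} (which itself cites the present lemma). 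Your route buys directness and arguably uses less machinery for the value computation; the paper's route folds value and uniqueness into one argument and avoids having to guess the dual certificate.

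Two small points you should tighten. First, you attribute the strict inequality $g(\beta) > \log\Racc$ for $\beta \notin [m,M]$ to ``Frostman's theorem,'' but as stated (Theorem~\ref{thm:pt:frostman}) Frostman only yields the weak inequality $\phi_{\mu^*}(z) \leq I(\mu^*)$ off $E$; the strictness requires an additional application of the Maximum Principle (the potential is harmonic on the connected set $\C\setminus[m,M]$ and tends to $-\infty$ at $\infty$, so it cannot attain its max $I(\mu^*)$ off $E$ without being constant). Second, in the uniqueness step you pass from ``$\lambda\mapsto\E_\mu\log|1-\beta^{-1}\lambda|$ equals $\log\Racc$ $\nu^*$-a.e.'' to ``identically on $[m,M]$''; this needs the observation that the map is upper semicontinuous (being $\phi_\mu(0) - \phi_\mu(\lambda)$ with $\phi_\mu$ lower semicontinuous, cf.\ Lemma~\ref{lem:quad-infinite:pt:potential-harmonic}) and that $\nu^*$ has full support, so a.e.\ equality together with the uniform bound $\leq \log\Racc$ upgrades to everywhere. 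Both are routine but worth stating explicitly.
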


This lemma was proven in the optimization literature via a long, tour-de-force integral calculation~\citep{kalousek}. In Appendix~\ref{app:pot}, we present an alternative, short proof from Altschuler's thesis~\citep{altschuler2018greed} that argues by connecting this extremal problem for stepsize distributions to a classical extremal problem in potential theory about electrostatic capacitors. The key feature of this connection is the following \emph{equalization property} of the Arcsine distribution. 

\begin{lemma}[Equalization property of the Arcsine distribution]\label{lem:arc-equalization}
	For all $\lambda \in [m, M]$,
	\begin{align}
	\E_{\beta \sim \ArcmM} \log \abs{1 - \beta^{-1}\lambda} =
	\log \frac{\sqrt{\kappa} - 1}{\sqrt{\kappa} + 1}\,.
	\label{eq:equalization}
	\end{align}
In particular, this value does not depend on $\lambda$.
\end{lemma}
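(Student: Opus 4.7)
The plan is to reduce the Arcsine integral to a uniform integral on $[0,\pi]$ via a trigonometric substitution, and then split the integrand into two pieces whose expectations are given by classical closed-form formulas. I would start by substituting $\beta = \tfrac{M+m}{2} + \tfrac{M-m}{2} \cos \theta$ with $\theta \in [0,\pi]$; a direct calculation shows that the Jacobian exactly cancels the $\sqrt{(M-\beta)(\beta - m)}$ factor in the Arcsine density, pushing it forward to the uniform density $d\theta/\pi$. Since $\lambda \in [m,M]$, one can similarly write $\lambda = \tfrac{M+m}{2} + \tfrac{M-m}{2} \cos \phi$ for some $\phi \in [0,\pi]$. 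Using $1 - \lambda/\beta = (\beta - \lambda)/\beta$ together with $\beta - \lambda = \tfrac{M-m}{2}(\cos\theta - \cos\phi)$, the target expectation decomposes as
\[
\E_{\beta \sim \ArcmM} \log\abs{1 - \beta^{-1} \lambda} \;=\; \log \tfrac{M-m}{2} + I_1(\phi) - I_2,
\]
where $I_1(\phi) := \int_0^\pi \log|\cos\theta - \cos\phi|\,\tfrac{d\theta}{\pi}$ and $I_2 := \int_0^\pi \log\bigl(\tfrac{M+m}{2} + \tfrac{M-m}{2}\cos\theta\bigr)\,\tfrac{d\theta}{\pi}$.

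The crux is to evaluate these two integrals. For $I_1$, I would use the product-to-sum identity $\cos\theta - \cos\phi = -2\sin\tfrac{\theta+\phi}{2}\sin\tfrac{\theta-\phi}{2}$ to unfold $I_1(\phi) = \log 2 + J(\phi) + J(-\phi)$, where $J(\phi) := \int_0^\pi \log|\sin\tfrac{\theta+\phi}{2}|\,\tfrac{d\theta}{\pi}$. A change of variables $u = (\theta \pm \phi)/2$ together with the $\pi$-periodicity and reflection symmetry of $|\sin(\cdot)|$ folds $J(\phi) + J(-\phi)$ into a full-period integral, whose value via the classical evaluation $\int_0^\pi \log|\sin u|\,du = -\pi \log 2$ is $-2\log 2$. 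This gives $I_1(\phi) = -\log 2$ \emph{for every} $\phi \in [0,\pi]$, and this $\phi$-independence is the equalization phenomenon manifested analytically. For $I_2$, I would invoke the standard formula $\int_0^\pi \log(a + b\cos\theta)\,\tfrac{d\theta}{\pi} = \log \tfrac{a + \sqrt{a^2 - b^2}}{2}$ (valid for $a > |b|$ and provable via Jensen's formula or direct contour integration), applied with $a = (M+m)/2$ and $b = (M-m)/2$, so that $a^2 - b^2 = Mm$ and $I_2 = \log \tfrac{(\sqrt{M}+\sqrt{m})^2}{4}$.

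Plugging these back in and factoring $M - m = (\sqrt{M} - \sqrt{m})(\sqrt{M} + \sqrt{m})$ yields
\[
\log \tfrac{M-m}{2} - \log 2 - \log \tfrac{(\sqrt{M}+\sqrt{m})^2}{4} \;=\; \log \tfrac{M-m}{(\sqrt{M}+\sqrt{m})^2} \;=\; \log \tfrac{\sqrt{M} - \sqrt{m}}{\sqrt{M}+\sqrt{m}} \;=\; \log \tfrac{\sqrt{\kappa}-1}{\sqrt{\kappa}+1},
\]
which is independent of $\phi$, and hence of $\lambda$, as claimed. The hard step is clearly the $\phi$-independence of $I_1$: although the integrand depends nontrivially on $\cos\phi$, the product-to-sum expansion produces two shifted copies of $\log|\sin(\cdot)|$ whose supports, after exploiting the reflection $|\sin(-v)| = |\sin v|$, tile a full period and therefore yield a $\phi$-free contribution. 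This is precisely where the equalization property enters the calculation, and it mirrors the coordinate-free potential-theoretic fact (to be pursued in Appendix~\ref{app:pot}) that the Arcsine law is the equilibrium measure on $[m,M]$ and so its logarithmic potential is constant on the interval.
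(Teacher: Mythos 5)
Your proof is correct, and it takes a genuinely different route from the paper. The paper's Appendix proof (\S\ref{app:arc-equalization}) reduces everything to a single cited black box, Fact~\ref{fact:eq} (the known logarithmic potential of the Arcsine/equilibrium measure on $[-1,1]$ from potential theory): after an affine change of variables it writes $\E_{\beta\sim\mu}\log|1-\beta^{-1}\lambda| = \phi_\mu(0) - \phi_\mu(\lambda)$ and simply reads off both values from that fact, which yields both the constancy on $[m,M]$ and the numerical value $\log\tfrac{\sqrt{\kappa}-1}{\sqrt{\kappa}+1}$ in one stroke. You instead compute the two integrals from scratch: push the Arcsine law forward to the uniform law on $[0,\pi]$, apply the product-to-sum identity, and invoke the two classical evaluations $\int_0^\pi\log|\sin u|\,du = -\pi\log 2$ and $\int_0^\pi\log(a+b\cos\theta)\,\tfrac{d\theta}{\pi} = \log\tfrac{a+\sqrt{a^2-b^2}}{2}$. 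I checked the details: the Jacobian cancellation, the tiling argument (the shifted intervals from $J_+$ and $J_-$ glue into one full period of the $\pi$-periodic $\log|\sin(\cdot)|$, giving $I_1(\phi)=-\log 2$ independent of $\phi$), the Jensen-formula evaluation of $I_2$ with $a^2-b^2 = Mm$, and the final factoring $M-m=(\sqrt{M}-\sqrt{m})(\sqrt{M}+\sqrt{m})$ all check out. In effect you are re-deriving the specific instances of Fact~\ref{fact:eq} you need; the paper itself notes that a direct trigonometric computation (as in \citep{pronzato11}) is an alternative to the potential-theoretic argument, and you have essentially reconstructed that alternative in a clean, self-contained form. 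What the paper's route buys is conceptual economy and reuse: Fact~\ref{fact:eq} and the surrounding Frostman machinery are needed anyway to prove uniqueness (Lemma~\ref{lem:arc-extremal}), so citing the equilibrium potential once does double duty. What your route buys is elementary self-containment, at the cost of a slightly longer computation and of not connecting directly to the uniqueness argument.
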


This equalization property uniquely defines the Arcsine distribution in that there is no other distribution satisfying~\eqref{eq:equalization}; see Appendix~\ref{app:pot}. In words, this property shows that the Arcsine distribution ``equalizes'' the convergence rate over all quadratic functions (parameterized by their curvatures $\lambda$). We emphasize that this is not just an algebraic identity: this equalization property also has a natural interpretation in terms of a game between an optimization algorithm (who chooses a stepsize to minimize the convergence rate) and an adversary (who chooses a quadratic function to maximize the convergence rate), see \S\ref{ssec:discussion:lb}. In Appendix~\ref{app:pot} we elaborate on the connections to potential theory, and leverage that connection to provide short proofs of these lemmas. For pedagogical reasons and to provide further intuition, in Appendix~\ref{app:equalizing-alternative} we also provide a short semi-rigorous proof using only elementary single-variable calculus.

\subsection{Beyond quadratic optimization}\label{ssec:hi:convex}

We now turn to accelerating GD beyond the special case of quadratic functions $f$. The central challenge is that the curvature, as measured by the Hessian $\nabla^2 f$, is no longer constant. This breaks the above quadratic analysis in several ways since it is no longer possible to summarize or re-parameterize the Hessians along the GD trajectory in terms of a single scalar $\lambda \in [m,M]$. To explain these issues, let us attempt to adapt this analysis to the general convex setting, assuming for simplicity here that $f$ is twice differentiable (our actual analysis in \S\ref{sec:separable} avoids any such assumption).

\paragraph*{Time-varying curvature.} Clearly, if $f$ is not quadratic, then the gradient $\nabla f$ is not linear, and as a consequence the GD map is not linear. Assuming as before that $x^* = 0$ without loss of generality, one can generalize the linear GD map~\eqref{eq:hi-quad:poly} to the non-quadratic setting as
\begin{align}
	x_n = \prod_{t=0}^{n-1} (I - \alpha_t H_t) x_0 \qquad \text{where} \qquad H_t := \int_0^1 \nabla^2 f( ux_t) du\,,
	\label{eq:hi-convex:poly}
\end{align}
since by the Fundamental Theorem of Calculus, $\nabla f(x_t) = H_tx_t$ and thus $x_{t+1} = x_t - \alpha_t \nabla f(x_t) = (I - \alpha_t H_t) x_t$. 
The interpretation of $H_t$ is that it is the average Hessian between $x^* = 0$ and $x_t$, i.e., it measures the relevant local curvature in the $t$-th iteration of GD. In the quadratic setting, $H_t = H$ for all $t$ since the Hessian $\nabla^2f \equiv H$ is constant, which recovers~\eqref{eq:hi-quad:poly} from~\eqref{eq:hi-convex:poly}. However, for the general non-quadratic setting, the curvature $H_t$ varies in $t$. 
This is the overarching challenge mentioned above. For the purpose of accelerating GD, this challenge manifests in two interrelated ways: the lack of \emph{commutativity} and \emph{predictability} of the curvature $H_t$ along the trajectory of GD. 
Below we describe both challenges and how we circumvent them.

\subsubsection{Issue 1: commutativity (deriving the extremal problem)} 
The first issue is that for convex functions, the Hessians at different points do not necessarily commute. Thus, in general, $\{H_t\}$ do not commute and therefore are not simultaneously diagonalizable. This precludes the key step~\eqref{eq:hi-quad:rate} in the quadratic analysis above, which decomposes the convergence rate across eigendirections, thereby reducing the (multivariate) problem of convergence analysis to an analytically tractable (univariate) extremal problem.
This is the only reason we assume separability: it is equivalent to commuting Hessians (Lemma~\ref{lem:sep-commute}).
Concretely, separability allows us to show an analog of~\eqref{eq:hi-quad:rate} in which the convergence rate of a stepsize schedule $\{\alpha_t\}$ is bounded by
\begin{align}
	\prod_{t=0}^{n-1} |1 - \alpha_t \lambda_t|^{1/n}\,,
	\label{eq:hi-convex:rate}
\end{align}
the difference to the quadratic case being that there all $\lambda_t \in [m,M]$ are the same, whereas now $\lambda_t \in [m,M]$ can be time-varying and can depend on all previous stepsizes $\{\alpha_s\}_{s < t}$. 
\par By taking an appropriate\footnote{A subtle but important technical issue is that one cannot simply use the Law of Large Numbers to derive the asymptotic convergence rate from the $n$-step rate (c.f.,~\eqref{eq:quad-rand-rate} for how that worked in the quadratic case). Indeed, in the convex case here, the relevant empirical sum $\frac{1}{n} \sum_{t=0}^{n-1} \log |1 - \beta_t^{-1} \lambda_t|$ has non-i.i.d.\ summands: they are neither identically distributed (since $\lambda_t$ is time-varying) nor independent (since $\lambda_t$ can depend on previous stepsizes). Briefly, we overcome the identically distributed issue by using the equalization property of the Arcsine distribution (Lemma~\ref{lem:arc-equalization}) to show that these summands still have the desired expectation, and we overcome the independence issue by using a martingale-based argument to show that the fresh randomness in $\alpha_t$ makes these summands ``independent enough'' to prove a specially tailored version of Kolmogorov's Strong Law of Large Numbers for this setting. Details in \S\ref{ssec:sep:uni}.} limit of the $n$-step convergence rate~\eqref{eq:hi-convex:rate}, it can be shown that the (logarithm of the) asymptotic rate for i.i.d.\ inverse stepsizes $\beta_t := \alpha_t^{-1}$ from a distribution $\mu$ is governed by the extremal problem:
\begin{align}
	\inf_\mu \sup_{\{\mathcal{A}_t\}-\text{adapted process }\{\lambda_t \in [m,M] \}} \limsup_{n \to \infty} \frac{1}{n} \sum_{t=0}^{n-1} \E_{\beta_t \sim \mu} \log \abs{1 - \beta_t^{-1} \lambda_t}
	\label{eq:extremal-convex}
\end{align}
where $\{\mathcal{A}_t\}$ denotes the natural filtration corresponding to the inverse stepsizes $\{\beta_t\}$.  Observe that~\eqref{eq:extremal-convex} recovers the extremal problem~\eqref{eq:quad-rand-rate} for the quadratic case if $\lambda_t$ are constant, but in general is different because $\lambda_t$ can be time-varying.
This brings us to the second issue.

\subsubsection{Issue 2: unpredictability (solving the extremal problem)}

The second issue is that the curvature is time-varying (as measured by $\lambda_t$), thus less predictable, which makes it difficult to choose stepsizes that lead to fast convergence (as measured by the extremal problem~\eqref{eq:extremal-convex}). 
The key upshot of our random stepsizes is that the randomness of $\beta_t$ prevents an adversarial choice of $\lambda_t \in [m,M]$ since $\lambda_t$ must be ``chosen'' before $\beta_t$ is drawn. More precisely, we exploit the equalization property of the Arcsine distribution (Lemma~\ref{lem:arc-equalization}) to argue that for \emph{any choice of $\lambda_t$}, the distribution of $\beta_t$ ensures that 
\begin{align}
\E_{\beta_t \sim \mu} \log |1 - \beta_t^{-1} \lambda_t|
=
\log \frac{\sqrt{\kappa} - 1}{\sqrt{\kappa} + 1}\,.
\label{eq:equalization-unpredictability}
\end{align}
Intuitively, this can be interpreted as using random stepsizes to \emph{hedge} between the possible curvatures $\lambda_t$: the expected ``progress'' from an iteration is the same regardless of the curvature. Quantitatively, this equalization property~\eqref{eq:equalization-unpredictability} lets us argue an accelerated convergence rate using a martingale version of the Law of Large Numbers. In particular, this implies that the Arcsine distribution---which was optimal for the extremal problem~\eqref{eq:extremal} in the quadratic setting---achieves the same value for the extremal problem~\eqref{eq:extremal-convex} in the convex setting, and therefore is also optimal for this setting (as it is more general).

\begin{remark}[Overcoming unpredictability via random hedging]\label{rem:heging}
		Observe that in the extremal problem~\eqref{eq:extremal-convex}, we allow $\{\lambda_t\}$ to vary arbitrarily in $t$, and in particular we do not enforce these curvatures to be consistent with a single convex function. This consistency is the defining feature of Performance Estimation Problems~\citep{DT14,pesto} and is essential to the line of work on accelerating GD using deterministic stepsize schedules~\citep{altschuler2018greed,daccache2019performance,eloi2022worst,gupta22,grimmer23,alt23hedging1,alt23hedging2,GrimmerShuWang,GSW24,wang2024relaxed,zhang2024accelerated,grimmer2024composing,bok24}. Remarkably, \emph{random} stepsizes obtain the fully accelerated rate without needing to enforce any consistency conditions. This can be interpreted through the lens of ``hedging'' between worst-case functions, as described in~\citep{altschuler2018greed}. On one hand, the opportunity for faster convergence via \emph{deterministic} time-varying stepsizes arises due to the rigidity of convex functions: a function being worst-case for the stepsize in one iteration may constrain its curvature from being worst-case in the next iteration. On the other hand, the opportunity for faster convergence via \emph{random} stepsizes arises because the worst-case function does not ``know'' the realization of the random stepsizes. It is an interesting question if these two opportunities can be combined, e.g., can randomized analyses also exploit consistency conditions on $\{\lambda_t\}$ in order to generalize beyond separability? See the future work discussion in \S\ref{sec:future}.
\end{remark}

\section{Proof of main result}\label{sec:separable}

Here we prove Theorem~\ref{thm:sep:main}. We build up to the full result by first proving it for univariate objectives, and then extending to the general (separable) multivariate setting. See \S\ref{sec:hi} for a high-level overview of the analysis techniques and the two key conceptual challenges regarding unpredictability of the Hessian (dealt with in the univariate proof via the equalization property of the Arcsine distribution) and commutativity of the Hessian (dealt with in the multivariate proof via separability). 
For shorthand, we denote the accelerated rate by 
\begin{align}
    \Racc := \frac{\sqrt{\kappa} - 1}{\sqrt{\kappa} + 1
    }\,.
\end{align}

\subsection{Univariate setting}\label{ssec:sep:uni}

We begin by remarking that unforeseen complications can occur even in the univariate setting. For example, Polyak's Heavy Ball method fails to converge for such functions~\citep[\S4.6]{LRP16}.

We first state a simple helper lemma that lets us avoid assumptions of twice-differentiability when expanding the convergence rate of GD in the form~\eqref{eq:hi-convex:rate} described in the overview \S\ref{ssec:hi:convex}.

\begin{lemma}\label{lem:sep:uni:helper}
    Suppose that $f : \R \to \R$ is $m$-strongly convex and $M$-smooth, and let $x^*$ denote its minimizer. Then for any $x \neq x^*$, it holds that $$m \leq \frac{f'(x)}{x - x^*} \leq M\,.$$
\end{lemma}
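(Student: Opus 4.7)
The plan is to derive both inequalities from the standard first-order characterizations of strong convexity and smoothness in the univariate case, combined with the optimality condition $f'(x^*)=0$.

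First I would recall that $M$-smoothness of $f$ means $f'$ exists and is $M$-Lipschitz, and that $m$-strong convexity is equivalent to $f'$ being $m$-strongly monotone, i.e.\ for all $x,y$,
\begin{align*}
    \bigl(f'(y)-f'(x)\bigr)(y-x) \;\geq\; m(y-x)^2,
\end{align*}
while $M$-smoothness (together with convexity) yields the co-coercivity/monotonicity bound
\begin{align*}
    \bigl(f'(y)-f'(x)\bigr)(y-x) \;\leq\; M(y-x)^2.
\end{align*}
Both are standard equivalences; I would cite a convex analysis reference (e.g.\ Nesterov's textbook) rather than re-derive them. Note also that since $x^*$ is the minimizer of a differentiable $f$, we have $f'(x^*)=0$.

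Next I would set $y = x$ and specialize the two inequalities to the pair $(x,x^*)$. Substituting $f'(x^*)=0$, the strong monotonicity inequality becomes $f'(x)(x-x^*) \geq m(x-x^*)^2$, and the smoothness inequality becomes $f'(x)(x-x^*) \leq M(x-x^*)^2$. Since $x \neq x^*$, the quantity $(x-x^*)^2$ is strictly positive, so I can divide both inequalities by $(x-x^*)^2$ to obtain
\begin{align*}
    m \;\leq\; \frac{f'(x)}{x-x^*} \;\leq\; M,
\end{align*}
which is the desired conclusion.

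There is no real obstacle here; the whole statement is essentially a restatement of the standard $m$--$M$ bounds on the secant slope of $f'$ through $x^*$, and the only care needed is to note that $M$-smoothness is what guarantees that $f'$ exists everywhere (so that the quotient $f'(x)/(x-x^*)$ is well-defined) and that $f'(x^*)=0$ by optimality. If the paper prefers a self-contained argument avoiding the equivalences cited above, one could alternatively apply the mean-value-style identity $f'(x)=\int_0^1 \phi'(t)\,dt$ for $\phi(t):=f'(x^*+t(x-x^*))$ whenever $f$ happens to be twice differentiable, but since the lemma is stated without $C^2$ assumptions, invoking the monotonicity characterizations is the cleanest route.
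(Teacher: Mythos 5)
Your proof is correct and takes essentially the same approach as the paper: both bounds come from the Lipschitz upper bound and strong-monotonicity lower bound on $f'(x) - f'(x^*)$ together with $f'(x^*)=0$. The paper phrases it with absolute values plus an explicit sign observation (that $\mathrm{sign}(f'(x)-f'(x^*)) = \mathrm{sign}(x-x^*)$ in one dimension), whereas you build the sign handling into the monotonicity inequalities by working with the product $(f'(x)-f'(x^*))(x-x^*)$ directly; the content is the same, and your parenthetical "(together with convexity)" for the upper bound is actually unnecessary, since $M$-Lipschitzness of $f'$ plus Cauchy--Schwarz already gives $(f'(y)-f'(x))(y-x)\leq M(y-x)^2$.
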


For intuition, if $f$ is twice-differentiable, then this lemma follows immediately from the Intermediate Value Theorem since $f'(x) = f''(z) (x - x^*)$ for some $z$ between $x$ and $x^*$, and the Hessian $f''(z) \in [m,M]$ by $m$-strong convexity and $M$-smoothness. The proof is not much more difficult without the assumption of twice-differentiability.

\begin{proof}
	For the upper bound, $M$-smoothness of $f$ implies $|f'(x)| = |f'(x) - f'(x^*)| \leq M |x - x^*|$. Note that $\text{sign}(f'(x) - f'(x^*)) = \text{sign}(x - x^*)$ since $f$ is univariate. Thus, by re-arranging, we obtain the desired inequality $f'(x)  / (x - x^*) \leq M$. The lower bound follows from an analogous argument, since $m$-strong convexity of $f$ implies $|f'(x) - f'(x^*)| \geq m|x - x^*|$. 
\end{proof}

\begin{proof}[Proof of Theorem~\ref{thm:sep:main} for univariate $f$]
	Let $\beta_t$ denote the inverse stepsizes; these are i.i.d.\ $\ArcmM$. Defining $\lambda_t := \frac{f'(x_t)}{x_t - x^*}$, we have by definition of GD that
	\[
	x_{t+1} - x^* = x_t - x^* - \beta_t^{-1}  f'(x_t) = (1 - \beta_t^{-1} \lambda_t) (x_t - x^*).
	\]
	Therefore for any $n \in \mathbb{N}$, the average rate after $n$ steps is 
	\begin{align}
	R_n := \Bigg|\frac{x_{n} - x^*}{x_0 - x^*} \Bigg|^{1/n}
	=
	\Bigg|
	\prod_{t=0}^{n-1} (1 - \beta_t^{-1} \lambda_t)
	\Bigg|^{1/n}.
	\label{eq:rate-main-proof}
	\end{align}
	Note that this sequence of random variables is uniformly integrable. Indeed for each $n \in \mathbb{N}$, a crude bound gives $|
	\prod_{t=0}^{n-1} (1 - \beta_t^{-1} \lambda_t)|^{1/n}
	\leq \max_{\beta,\lambda \in [m,M]} |1 - \beta^{-1}\lambda|
	= \kappa - 1
	< \infty$.
	Therefore, by the Dominated Convergence Theorem, $L_1$ convergence of $R_n$ to $\Racc$ will follow immediately from almost-sure convergence. To show the latter, note that since almost-sure convergence is preserved under continuous mappings (in particular the $\exp(\cdot)$ function), it suffices to show that
	\begin{align}
	\frac{1}{n}
	\sum_{t=0}^{n-1} Z_t
	\asc
	\log  \Racc,
	\nonumber
	\end{align}
	where we define 
	\[
		Z_t := \log|1 - \beta_t^{-1} \lambda_t|\,.
	\]
    As mentioned in the overview section~\ref{ssec:hi:convex}, the issue with applying the Law of Large Numbers is that $\{Z_t\}$ are not i.i.d. (For example, even though we show below that these random variables have the desired first moments---which allows us to adapt the Law of Large Numbers---one can check that $\{Z_t\}$ are neither independent nor identically distributed.)
	\par Now observe that $\lambda_t$ depends on the realizations of only the \textit{previous} $\beta_0, \dots, \beta_{t-1}$. Moreover, $\lambda_t$ lies inside $[m, M]$ by Lemma~\ref{lem:sep:uni:helper}. Thus by the equalization property of the Arcsine distribution (\cref{lem:arc-equalization}), $\E[Z_t] = \log\Racc$ for all $t$, even if $\lambda_t$ is chosen adversarially as a function of the previous $\beta_0, \dots, \beta_{t-1}$. Indeed, by the independence of $\{\beta_t\}$ and the Law of Iterated Expectations:
	\begin{align}
		\E \Big[ Z_t \Big]
		&=
		\E_{\beta_0, \dots, \beta_t}
		\Big[ \log \abs{1 - \beta_t^{-1}\lambda_t} \Big] \nonumber
		\\ &=
		\E_{\beta_0, \dots, \beta_{t-1}} \Big[
		\E_{\beta_t} \Big[
		\log \abs{1 - \beta_t^{-1} \lambda_t} \; \Big| \; \beta_0, \dots, \beta_{t-1} \Big] \Big] \nonumber
		\\ &= \E_{\beta_0, \dots, \beta_{t-1}}
		\Big[ \log \Racc \Big] \nonumber
		\\ &= \log \Racc. \label{eq:sep:uni:main:expectation}
	\end{align}
	Therefore it suffices to show
	\begin{align}
		\frac{1}{n}
		\sum_{t=0}^{n-1} \left(Z_t - \E[Z_t]\right)
		\asc 0.
		\label{eq:goal-2}
	\end{align}
	We show this by adapting the standard proof of Kolmogorov's Strong Law of Large Numbers, see e.g.~\citep{martingale-book} for the classical proof. We first argue that
	$$
		M_n := \sum_{t=0}^{n-1} \frac{Z_t - \E[Z_t]}{t+1}
	$$
	forms a martingale w.r.t.$\,$the natural filtration $\cF_n = \sigma(\{\beta_t\}_{t < n})$, as stated in the following lemma. Intuitively this follows from the calculation~\eqref{eq:sep:uni:main:expectation}; a formal proof is deferred to Appendix~\ref{app-pf:lem:L1-martingale}. 
	\begin{lemma}[Martingale helper lemma]\label{lem:L1-martingale}
		$\{M_n\}$ is an $L_1$-bounded martingale w.r.t.\,$\{F_n\}$.
	\end{lemma}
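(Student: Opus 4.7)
The plan is to verify separately the martingale identity $\E[M_{n+1} \mid F_n] = M_n$ and the $L_1$-bound $\sup_n \E[|M_n|] < \infty$; adaptedness is automatic since $M_n$ is a function of $Z_0, \dots, Z_{n-1}$.

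For the martingale property, by linearity I only need to show that the increment $D_n := M_{n+1} - M_n = (Z_n - \E[Z_n])/(n+1)$ satisfies $\E[D_n \mid F_n] = 0$, which reduces to $\E[Z_n \mid F_n] = \log \Racc$. The key observation is that $F_n \subseteq \sigma(\beta_0, \dots, \beta_{n-1})$, since each $Z_s$ with $s < n$ is determined by $\beta_0, \dots, \beta_s$ through the GD recursion. Conditionally on this larger $\sigma$-algebra, $\lambda_n$ is a deterministic value in $[m, M]$ (by Lemma~\ref{lem:sep:uni:helper}), while $\beta_n \sim \ArcmM$ remains independent of the past. The equalization property (Lemma~\ref{lem:arc-equalization}) then gives $\E[Z_n \mid \sigma(\beta_0, \dots, \beta_{n-1})] = \log \Racc$, and the tower property delivers $\E[Z_n \mid F_n] = \log \Racc = \E[Z_n]$, where the last equality is~\eqref{eq:sep:uni:main:expectation}.

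For the $L_1$-bound, I would instead establish the stronger statement $\sup_n \E[M_n^2] < \infty$ and conclude via Jensen's inequality. Since $\{D_t\}$ are martingale differences (by the previous paragraph), they are orthogonal in $L_2$, so
\begin{align*}
\E[M_n^2] \;=\; \sum_{t=0}^{n-1} \frac{\E[(Z_t - \E[Z_t])^2]}{(t+1)^2} \;\leq\; \sum_{t=0}^{n-1} \frac{\E[Z_t^2]}{(t+1)^2} \;\leq\; C \sum_{t=0}^{\infty} \frac{1}{(t+1)^2},
\end{align*}
where $C := \sup_{\lambda \in [m, M]} \E_{\beta \sim \ArcmM}\bigl[\log^2 |1 - \beta^{-1} \lambda|\bigr]$ bounds $\E[Z_t^2]$ after conditioning on the past and using $\lambda_t \in [m, M]$ pointwise.

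The only nontrivial step is showing that $C$ is finite. The cleanest route is to change variables via $\beta = \tfrac{M+m}{2} + \tfrac{M-m}{2} \cos \theta$ with $\theta$ uniform on $[0, \pi]$, which is precisely the parameterization realizing $\beta \sim \ArcmM$ (cf.\ Figure~\ref{fig:cheby}). This converts $C$ into $\sup_{\lambda \in [m,M]} \tfrac{1}{\pi} \int_0^\pi \log^2 |1 - \beta(\theta)^{-1} \lambda| \, d\theta$, eliminating the density singularities at the endpoints. The integrand now has at worst a logarithmic singularity at the (at most one) $\theta^* \in [0, \pi]$ with $\beta(\theta^*) = \lambda$; even in the extreme cases $\lambda \in \{m, M\}$ where $\beta'(\theta^*) = 0$, a local Taylor expansion of $\beta$ produces only a $\log^2(\theta - \theta^*)$ singularity, which is integrable. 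A uniform bound on the compact set $[m, M]$ then follows either by continuity of the integral in $\lambda$ together with compactness, or by splitting the interval into a small neighborhood of $\theta^*$ (bounded by a direct $\int \log^2(\theta-\theta^*)\,d\theta$ estimate) and its complement (where the integrand is uniformly bounded). Altogether, $\sup_n \E[M_n^2] \leq C\pi^2/6 < \infty$, yielding the $L_1$-boundedness.
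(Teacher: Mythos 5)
Your proof is correct and follows the same structure as the paper's: establish the martingale property via the equalization property of the Arcsine distribution plus the tower property, then reduce $L_1$-boundedness to $L_2$-boundedness and exploit orthogonality of the increments (the paper phrases this as $\Cov(Z_s, Z_t)=0$, which is the same fact) to get $\Var(M_n) \leq C\sum_t (t+1)^{-2}$. The only substantive difference is that the paper asserts finiteness of the variance bound in one line (citing an integrable singularity and deferring to Appendix 5 of~\citep{kalousek}), whereas you supply the argument explicitly via the uniform-on-$[0,\pi]$ reparameterization $\beta = \tfrac{M+m}{2}+\tfrac{M-m}{2}\cos\theta$, which removes the density singularities at the endpoints and leaves only an integrable $\log^2$ singularity; this is a reasonable and correct fleshing-out of the step the paper treats as standard.
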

	
	Thus we may apply the Martingale Convergence Theorem (see e.g.,~\citep[\S11.5]{martingale-book}) to establish that $M_{\infty} := \lim_{n \to \infty} M_n$ exists and is finite almost surely. Therefore by Kronecker's Lemma (recalled  below), we conclude that $\frac{1}{n}\sum_{t=0}^{n-1} (Z_t - \E[Z_t]) \asc 0$, proving~\eqref{eq:goal-2} as desired.
\end{proof}

\begin{lemma}[Kronecker's Lemma]
	Let $\{z_n\}$ and $\{b_n\}$ be two real-valued sequences such that each $b_n$ is strictly positive, the $b_n$ are strictly increasing to $\infty$, and $\sum_{t=0}^{\infty} \tfrac{z_t}{b_t}$ converges to a finite limit. Then $\lim_{n \to \infty} \tfrac{1}{b_n} \sum_{t=0}^{n-1} z_t = 0$.
\end{lemma}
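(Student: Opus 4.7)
The plan is to use summation by parts (Abel summation) to convert the claim into a Toeplitz/Ces\`aro-type weighted-average argument. Let $s_n := \sum_{t=0}^{n-1} z_t / b_t$ and $s := \lim_{n \to \infty} s_n$, which exists and is finite by hypothesis (so $s_0 = 0$). The defining identity $z_t = b_t (s_{t+1} - s_t)$ rewrites the target sum $\sum_{t=0}^{n-1} z_t$ entirely in terms of differences of the convergent sequence $\{s_n\}$, which is exactly the regime where Abel summation is effective.

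First, applying Abel summation to $\sum_{t=0}^{n-1} b_t(s_{t+1} - s_t)$ and using $s_0 = 0$ yields the identity
\[
\sum_{t=0}^{n-1} z_t \;=\; b_{n-1}\, s_n \;-\; \sum_{t=1}^{n-1} (b_t - b_{t-1})\, s_t.
\]
Dividing by $b_n$ and writing $b_{n-1}/b_n = 1 - (b_n - b_{n-1})/b_n$ absorbs the boundary term into the sum, producing
\[
\frac{1}{b_n} \sum_{t=0}^{n-1} z_t \;=\; s_n \;-\; \frac{1}{b_n} \sum_{t=1}^{n} w_t\, \tilde s_t,
\]
where $w_t := b_t - b_{t-1} > 0$, $\tilde s_t := s_t$ for $t < n$, and $\tilde s_n := s_n$. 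The representation is clean because the weights $w_t$ are positive (by strict monotonicity of $b_t$) and their total $\sum_{t=1}^n w_t = b_n - b_0$ is comparable to $b_n$ for large $n$ (since $b_n \to \infty$).

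Next I would analyze the right-hand side termwise. The first term $s_n$ tends to $s$ by hypothesis. The second term is a weighted average of the $\tilde s_t$'s; the total normalized weight is $1 - b_0/b_n \to 1$, while for every fixed index $t$ the individual normalized weight $w_t/b_n \to 0$ (again using $b_n \to \infty$). Since $\tilde s_t \to s$, a standard Toeplitz-style argument then forces this weighted average to converge to $s$ as well, so the two contributions cancel and the left-hand side tends to $0$.

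The only nontrivial step is the Toeplitz limit, and this is where I expect the bulk of the writing to go. The argument is routine: given $\varepsilon > 0$, pick $T$ so large that $|\tilde s_t - s| < \varepsilon$ for all $t > T$; then bound the tail $\frac{1}{b_n}\sum_{t > T} w_t(\tilde s_t - s)$ by $\varepsilon \cdot (1 - b_0/b_n) \leq \varepsilon$, and control the finite head $\frac{1}{b_n}\sum_{t=1}^T w_t (\tilde s_t - s)$ by sending $n \to \infty$ with $T$ fixed, using that each $w_t/b_n \to 0$ individually. This gives $\frac{1}{b_n}\sum_{t=0}^{n-1} z_t \to s - s = 0$, which is the claim.
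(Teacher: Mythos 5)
The paper states Kronecker's Lemma as a classical fact and does not prove it, so there is nothing in the paper to compare against. Your proof is correct and is the standard textbook argument: Abel (summation-by-parts) followed by a Toeplitz/regular-summability limit. The algebra checks out --- with $s_0=0$, summation by parts gives $\sum_{t=0}^{n-1} z_t = b_{n-1}s_n - \sum_{t=1}^{n-1}(b_t-b_{t-1})s_t$, and absorbing $b_{n-1}/b_n$ as you do rewrites $\tfrac{1}{b_n}\sum_{t<n} z_t = s_n - \tfrac{1}{b_n}\sum_{t=1}^{n} w_t s_t$ with $w_t = b_t - b_{t-1} > 0$ and $\sum_{t\le n} w_t = b_n - b_0$. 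Since each $w_t/b_n \to 0$ (fixed $t$) and $(b_n-b_0)/b_n \to 1$, the Toeplitz conditions hold and the weighted average of $s_t$ converges to $s$, cancelling the $s_n \to s$ term. Two cosmetic remarks: the $\tilde s_t$ notation is superfluous since $\tilde s_t = s_t$ for all $t \le n$, and to be fully rigorous the Toeplitz step should be spelled out as you sketch it (split at a fixed $T$ with $|s_t - s| < \varepsilon$ for $t > T$, bound the tail by $\varepsilon$, and send $n\to\infty$ with $T$ fixed for the head), but both are minor and the argument is sound.
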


\subsection{Multivariate setting}\label{ssec:sep:sep}
Here we prove Theorem~\ref{thm:sep:main} in the general multivariate setting. The proof follows quickly from the univariate setting proved above and the definition of separability.

\begin{proof}[Proof of Theorem~\ref{thm:sep:main}] 
	First, note that it suffices to prove the theorem for functions which are separable in the standard basis, i.e., functions $f$ of the form
	\begin{align}
		f(x) = \sum_{i=1}^d f_i([x]_i)\,.
		\label{eq:sep-std-basis}
	\end{align}
    Indeed, consider a general separable function $f(x) = \sum_{i=1}^d f_i([Ux]_i)$ where $U \in \R^{d \times d}$ is orthogonal. Then by the Chain Rule, $\nabla f = U^T \nabla h$ where $h(y) := \sum_{i=1}^d f_i([y]_i)$. Hence running GD $x_{t+1} = x_t - \alpha_t \nabla f(x_t)$ on $f$ is equivalent to running GD $y_{t+1} = y_t -\alpha_t \nabla h(y_t)$ on $h$, in the sense that $y_t = Ux_t$ for all $t$. Thus, modulo an isometric change of coordinates which does not affect the convergence rate, we may assume henceforth that the objective function is separable of the form~\eqref{eq:sep-std-basis}.

	\par Since $f(x) = \sum_{i=1}^d f_i([x]_i)$, the $i$-th coordinate of $\nabla f(x)$ is $f_i'([x]_i)$. Thus the GD update $
	x_{t+1} = x_t - \beta_t^{-1} \nabla f(x_t)
	$
	can be re-written coordinate-wise as
	\begin{align}
		[x_{t+1}]_i = [x_t]_i -  \beta_t^{-1}  f_i'([x_t]_i), \qquad \forall i \in [d]\,.
		\label{eq:sep:uni:main:update-coord}
	\end{align}
	Now we make two observations. First,~\eqref{eq:sep:uni:main:update-coord} is the update that GD would perform to minimize $f_i$ from initialization $[x_0]_i$. Second, each $f_i$ is $m$-strongly convex and $M$-smooth (because strong convexity and smoothness are preserved under restrictions). Thus we may apply the univariate setting of Theorem~\ref{thm:sep:main} (proved above) to argue that the asymptotic convergence rate is
	\[
	\limn
	\left(
	\frac{\abs{[x_{n}]_i - [x^*]_i}}{\abs{[x_0]_i -[x^*]_i}}
	\right)^{1/n}
	\ase
	\Racc
	\]
	for every coordinate $i \in S  := \{i \in [d] : [x_0]_i \neq [x^*]_i\}$ for which the initialization is not already optimal. Note that $x_0 \neq x^*$ ensures $S$ is non-empty. Since $[x^*]_i$ is the unique minimizer of $f_i$, and since also the number of coordinates $i \in S$ is finite, it follows that
	\[
	\limn \left(\frac{\|x_{n} - x^*\|}{\|x_0 - x^*\|} \right)^{1/n}
	\ase
	\limn
	\max_{i \in S}
	\left(
	\frac{\abs{[x_{n}]_i - [x^*]_i}}{\abs{[x_0]_i -[x^*]_i}}
	\right)^{1/n}
	\ase
	\Racc\,.
	\]
	This establishes the desired almost-sure convergence. $L_1$ convergence then follows from the Dominated Convergence Theorem, since we have uniform integrability by a similar argument as in the univariate setting above.
\end{proof}

\section{Generalizations}\label{sec:extensions}

The purpose of this paper is to show that random stepsizes can accelerate GD on classes of strongly convex and smooth functions that are more general than quadratics. For concreteness, Theorem~\ref{thm:sep:main} focuses on functions that are separable, strongly convex, and smooth. But this result extends to other classes of functions. We describe two extensions below, thematically splitting modifications of the separability assumption (\S\ref{ssec:extensions:separability}) from the strong convexity and smoothness assumptions (\S\ref{ssec:extensions:scs}). These two types of modifications are complementary in the sense that they individually deal with the two key issues discussed in \S\ref{ssec:hi:convex}: commutativity and unpredictability of the Hessian, respectively.

\begin{remark}[Performance metrics]
    We also mention that throughout, we measure convergence via the contraction factor $(\frac{\|x_n - x^*\|}{\|x_0 - x^*\|})^{1/n}$. The results are unchanged if the progress measure is changed from distance to function suboptimality or gradient norm, at either the initialization $x_0$, termination $x_n$, or both. This is because for $\kappa$-conditioned functions, all these progress measures $\|x - x^*\|^2$, $f(x) - f(x^*)$, and $\|\nabla f(x)\|^2$ are equivalent up to a multiplicative factor $c$ that is independent of $n$ (it depends only on the strong convexity and smoothness parameters), and therefore in the limit $n \to \infty$, the convergence rate is unaffected by changing these performance metrics since $\limn c^{1/n} = 1$.
\end{remark}

\subsection{Beyond separability}\label{ssec:extensions:separability}

Here we consider one example of a structural assumption that is different from separability yet still allows for the acceleration phenomenon via random stepsizes. To introduce this, recall from Definition~\ref{def:sep} that separability amounts to decomposability into a certain type of building block: univariate functions. Here we consider another building block: radial functions. 

\begin{defin}[Radial functions]
	A function $r : \R^d \to \R$ is \emph{radial} if $r(x) = h(\|x - x^*\|^2)$
	for some function $h : \R \to \R$ and point $x^* \in \R^d$.
\end{defin}

We begin by showing acceleration for the case of a single radial function, before moving to functions built from multiple radial functions. In words, the key idea behind the proof is that although radial functions do not have commutative Hessians---which we exploited in our main result to reduce the (multivariate) problem of convergence analysis to an analytically tractable (univariate) extremal problem, see the overview  \S\ref{ssec:hi:convex}---radial functions are univariate in nature (in particular all gradients in a GD trajectory are colinear), and this enables a similar reduction.

\begin{theorem}[Random stepsizes accelerate GD on radial functions]\label{thm:radial}
	Theorem~\ref{thm:sep:main} is unchanged if the assumption of separability is replaced with radiality. 
\end{theorem}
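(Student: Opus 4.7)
The plan is to reduce the radially-separable case to the univariate case treated in \S\ref{ssec:sep:uni}, in two stages that mirror the two stages already used in the proof of Theorem~\ref{thm:sep:main}: first eliminate $U$ and decompose across blocks, and then collapse each radial block to a single univariate GD trajectory. The orthogonal change of variables $y_t = U x_t$ argument from \S\ref{ssec:sep:sep} applies verbatim, so we may assume $U = I$ and $f(x) = \sum_{i=1}^b r_i([x]_{S_i})$ with $r_i$ radial on $\R^{|S_i|}$ around some center $[x^*]_{S_i}$. Because $\nabla f(x)$ restricted to block $S_i$ equals $\nabla r_i([x]_{S_i})$ (involving only coordinates in $S_i$), the GD iterates decompose blockwise, and it suffices to prove the theorem for a single radial $f(x) = h(\|x - x^*\|)$ that is $m$-strongly convex and $M$-smooth on its ambient space $\R^{|S_i|}$.

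The key new step, which handles a single radial block, is to show that GD with any stepsize schedule remains on the line $L := x^* + \R \cdot v$, where $v = (x_0 - x^*)/\|x_0 - x^*\|$, and that its trajectory there coincides with univariate GD on the restriction $g(s) := f(x^* + sv)$. Concretely, if $x_t - x^* = s_t v$, then radiality forces $\nabla f(x_t)$ to point along $v$: writing $\nabla f(x) = h'(\|x - x^*\|) \cdot (x - x^*)/\|x - x^*\|$ whenever $x \neq x^*$ gives $\nabla f(x_t) = g'(s_t)\, v$, with the case $s_t = 0$ handled by $\nabla f(x^*) = 0$ and $g'(0) = 0$. Consequently $x_{t+1} - x^* = (s_t - \alpha_t g'(s_t))\, v$, so the GD trajectory on $f$ is in bijection with the univariate GD trajectory on $g$ starting from $s_0 = \|x_0 - x^*\|$, and in particular $\|x_t - x^*\| = |s_t|$ for all $t$.

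The final step is to verify that $g$ inherits the hypotheses of the univariate setting. Strong convexity and smoothness are preserved under restriction to the affine line $L$, so $g : \R \to \R$ is $m$-strongly convex and $M$-smooth, and $s^* = 0$ is its unique minimizer. Applying the univariate case of Theorem~\ref{thm:sep:main} already proved in \S\ref{ssec:sep:uni} with i.i.d.\ inverse stepsizes $\beta_t \sim \ArcmM$ gives
\begin{equation*}
\left(\frac{\|x_n - x^*\|}{\|x_0 - x^*\|}\right)^{1/n} = \left(\frac{|s_n|}{|s_0|}\right)^{1/n} \longrightarrow \Racc
\end{equation*}
both almost surely and in $L_1$. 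Combining blocks by taking the maximum over the finitely many indices $i \in [b]$ with $[x_0]_{S_i} \neq [x^*]_{S_i}$, exactly as at the end of the proof of Theorem~\ref{thm:sep:main}, yields the radially-separable rate. Uniqueness of the Arcsine distribution transfers from the univariate case in the same way.

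The main obstacle I anticipate is purely technical rather than conceptual: justifying the one-dimensional reduction when the trajectory passes through $x^*$, where $\nabla f$ is only differentiable in the radial sense and where the signed coordinate $s_t$ may change sign (e.g.\ for large stepsizes). This is resolved by noting that $g$ is $C^1$ on all of $\R$ (with $g'(0)=0$), that Lemma~\ref{lem:sep:uni:helper} applies to $g$ without any smoothness assumption beyond strong convexity and smoothness, and that the univariate analysis in \S\ref{ssec:sep:uni} is written in terms of $\lambda_t = g'(s_t)/(s_t - s^*)$, which is well defined and lies in $[m,M]$ for every $s_t \neq 0$. Everything else is bookkeeping.
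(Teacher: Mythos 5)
Your proposal is correct and takes essentially the same route as the paper: reduce to a single radial block, observe that the gradient of a radial function is always proportional to $x - x^*$ so GD stays on the line through $x_0$ and $x^*$, and then invoke the already-proved univariate case. The paper phrases the reduction slightly differently---it works with $f(x) = h(\|x\|^2)$, sets $\lambda_t := 2h'(\|x_t\|^2)$, and directly derives the product form $x_n = \prod_{t=0}^{n-1}(1-\alpha_t\lambda_t)x_0$ rather than explicitly constructing the restriction $g(s) := f(x^*+sv)$ and exhibiting a bijection of trajectories---but this is a cosmetic distinction, not a different argument. Your more explicit handling of the sign of $s_t$ and the degenerate case $s_t = 0$ is a welcome clarification that the paper leaves implicit.
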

\begin{proof}
    	First, note that the center $x^*$ of a convex radial function is a minimizer. (Indeed, for any $x \in \R^d$, we have by radiality and then Jensen's inequality that $f(x) = \E_z f(z) \geq f(x^*)$, where the expectation is over $z$ drawn uniformly at random from the sphere of radius $\|x - x^*\|$ around $x^*$.)
	Thus after a possible translation, we may assume without loss of generality that $x^* = 0$, so that
	\[
	f(x) = h(\|x\|^2)
	\]
	for some univariate function $h : \R \to \R$. The chain rule implies $\nabla f(x) = 2h'(\|x\|^2) x$, hence the GD update $x_{t+1} = x_t - \alpha_t \nabla f(x_t)$ expands to
	\begin{align}
		x_n = \prod_{t=0}^{n-1} ( 1 - \alpha_t \lambda_t) x_0
		\qquad \text{ where } \qquad
		\lambda_t := 2h'(\|x_t\|^2)\,.
		\label{eq:radial-univariate}
	\end{align} 
	It is straightforward to show that $\lambda_t \in [m,M]$, e.g., this follows from Lemma~\ref{lem:sep:uni:helper} after restricting $f$ to the line between $x^*$ and $x_t$. Thus this
	gives an alternative approach for reducing the (multivariate) convergence rate to the same (univariate) product form~\eqref{eq:hi-convex:rate}, at which point the rest of our analysis goes through unchanged. 
\end{proof}

It follows that acceleration via random stepsizes generalizes to functions which can decompose into radial functions and univariate convex functions.

\begin{cor}
    Theorem~\ref{thm:sep:main} is unchanged if the assumption of separability is replaced with the following: suppose $f$ admits a 
    decomposition of the form
	\begin{align}
	f(x) = \sum_{i=1}^b f_i([Ux]_{S_i})\,,
	\label{eq:def-radially-separable}
\end{align} 
    for some orthogonal matrix $U \in \R^{d \times d}$, partition $S_1, \dots, S_b$ of $[d]$, and functions $f_1, \dots, f_b$ which are radial and/or univariate. (Above, $[Ux]_{S_i}$ denotes the subset of coordinates of $Ux$ in set $S_i$.)
\end{cor}
\begin{proof}
    Follows by the same argument as in \S\ref{ssec:sep:sep}, since we have established accelerated convergence for both types of blocks in this decomposition: univariate (\S\ref{ssec:sep:uni}) and radial functions (Theorem~\ref{thm:radial}).
\end{proof}

\subsection{Beyond strong convexity and smoothness}\label{ssec:extensions:scs}

Recall that for twice-differentiable functions $f$, the assumptions of $m$-strong convexity and $M$-smoothness are equivalent to the assumption that the spectrum of $\nabla^2 f(x)$ lies in the interval $[m,M]$ for all points $x$. The algorithmic techniques developed in this paper extend to functions~$f$ whose Hessians have more structured spectra, lying within some set $E \subset \R_{> 0}$ that is not necessarily an interval.
% , for example the union of two disjoint intervals. 
Note that strong convexity of $f$ ensures a strictly positive real spectrum. Such extensions to more structured Hessians have led to improved results for other algorithms in quadratic settings and beyond, see e.g.,~\citep{kelner2022big,oymak2021provable,goujaud2022super}.

\par This extension is rooted in connections to logarithmic potential theory, and proceeds as follows. Recall from the overview \S\ref{sec:hi} that the asymptotic convergence rate of GD with inverse stepsizes i.i.d.\ from a distribution $\mu$, is related to the maximum logarithmic potential of $\mu$ over $[m,M]$, i.e.,
\begin{align}
	\sup_{\lambda \in [m,M]} \E_{\beta \sim \mu} \log |1 - \beta^{-1} \lambda|\,,
\end{align}
and that the unique distribution minimizing this quantity is the Arcsine distribution (Lemma~\ref{lem:arc-extremal}).
This extends to spectral sets $E \subset \R_{> 0}$ beyond intervals, in particular the finite union of non-singleton compact intervals. Indeed, by a similar derivation, the convergence rate in this case is related to the analogous extremal problem
\begin{align}
	\sup_{\lambda \in E} \E_{\beta \sim \mu} \log |1 - \beta^{-1} \lambda|\,,
\end{align}
and the unique distribution minimizing this quantity is the unique optimal stepsize distribution.
What is this optimal distribution? This is a classical topic in potential theory and there is a well-developed machinery (typically based on computing conformal mappings, Green's functions, or Fekete sets) for determining this distribution explicitly in closed-form when possible (typically this only occurs for very simple sets) or otherwise approximating it; we refer the interested reader to the surveys and textbooks~\citep{ransford,saff2013logarithmic,saff10,6steps}.

\section{Discussion and related results}\label{sec:discussion}

\begin{figure}
    \centering
    \includegraphics[width=0.7\linewidth]{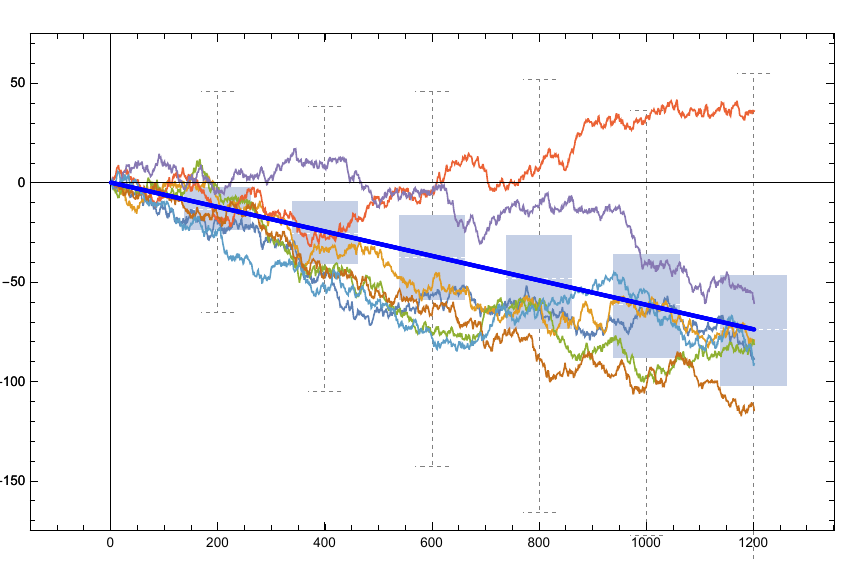}
    \caption{\footnotesize The randomness of the proposed stepsizes leads to variability between runs. Illustrated here with a boxplot for $500$ runs ($7$ full trajectories shown) from the same initialization on a univariate quadratic $f$, with $\kappa = 200$; similar phenomena occur for other problems. The horizontal axis is the number of iterations $n$; the vertical axis is $\log_{10} \tfrac{\|x_n - x^*\|}{\|x_0 - x^*\|}$. The worst run can diverge exponentially, but only with exponentially low probability (details in \S\ref{ssec:discussion:notions}). The median run converges exponentially at the optimal accelerated rate $\Racc$ (Theorem~\ref{thm:sep:main}). The fit is extremely precise, as shown by the bolded blue line. Notice that the best run can be substantially better than the median run.}
	\label{fig:runs}
\end{figure}

\subsection{Divergent runs and notion of convergence}\label{ssec:discussion:notions}

When considering randomized optimization algorithms, there are several notions for measuring the convergence rate. Two natural ones are:
\begin{align}
	\E \Bigg[ \Bigg( \frac{\|x_{n} - x^*\|}{\|x_0 - x^*\|} \Bigg)^{1/n} \Bigg] \qquad \text{ vs } \qquad 
	\E\Bigg[\frac{\|x_{n} - x^*\|}{\|x_0 - x^*\|} \Bigg]^{1/n} \,.
	\label{eq:notions}
\end{align}
These are different. Indeed, our main result shows that by running GD with random stepsizes, the former quantity is asymptotically equal to the accelerated rate $\Racc = \tfrac{\sqrt{\kappa} - 1}{\sqrt{\kappa} + 1} < 1$. Whereas the latter quantity can be larger than $1$ (details in Appendix~\ref{app:notions}). Why the discrepancy between these two notions of convergence rate? Which is relevant for optimization?

\par The discrepancy arises because the error $\|x_{n} - x^*\|$ is exponentially small with overwhelming probability (leading to our convergence result for the first notion), but can be exponentially large with exponentially small probability (leading to the instability for the second notion). \emph{The key point is that for optimization algorithms, it is very desirable to succeed with overwhelming probability, even if performance is poor in the unlikely complement event. Simply re-run the algorithm if that happens.} This is why we study the quantity on the left hand side of~\eqref{eq:notions}. For intuition, we conclude this section with an illustrative toy example that explains this discrepancy in its simplest form. 

\begin{example}[Measuring convergence when exponentially rare trajectories can be exponentially large]
	Consider i.i.d.\ random variables $Z_t$ which are each equally likely to be $1/9$ or $3$. Then $(\prod_{t=1}^n Z_t)^{1/n} = 3^{\tfrac{1}{n}\sum_{t=1}^n \log_3 Z_t} \to 3^{\E[\log_3 Z_t]} = 3^{-1/2} < 1$ by the Law of Large Numbers, hence
	\begin{align}
		\lim_{n \to \infty} \E\Big[ \Big( \prod_{t=1}^n Z_t \Big)^{1/n} \Big] = \frac{1}{\sqrt{3}} < 1\,.
		\label{ex-stability:1}
	\end{align}
	Whereas
	\begin{align}
		\lim_{n \to \infty} \E \Big[ \prod_{t=1}^n Z_t \Big]^{1/n} = \E[Z_1] = \frac{14}{9} > 1\,.
		\label{ex-stability:2}
	\end{align}
	In words, this discrepancy arises because
	$\prod_{t=1}^n Z_t$ can be exponentially large in $n$, but the probability of this is exponentially small. For example, $Z_1 = \dots = Z_n = 3$ occurs with probability $2^{-n}$ and thus by itself, this event contributes $\lim_{n \to \infty} (3^n 2^{-n})^{1/n} = 3/2 > 1$ to the expectation in~\eqref{ex-stability:2}. Whereas the $1/n$ normalization inside the expectation in~\eqref{ex-stability:1} dampens the effect of exponentially rare blow-ups, and thus agrees with the exponentially fast convergence that occurs with overwhelming probability.
\end{example}

This distinction is analogous to the classical Kelly criterion in information theory and gambling: for repeated multiplicative bets, the objective governing long-run typical performance is not the arithmetic expectation of terminal wealth, but rather the expected logarithmic growth rate, or equivalently the asymptotic geometric mean \citep{Kelly1956,Breiman1961,CoverThomas2006}. Here the sign is reversed---we seek fast multiplicative decay of error rather than fast growth of wealth---but the principle is the same. The normalized logarithm of the error captures the almost-sure exponential rate, whereas the expectation of the unnormalized error can be controlled by exponentially rare trajectories.

\subsection{Typical runs and high probability bounds}\label{ssec:discussion:hp}

Theorem~\ref{thm:sep:main} establishes that random stepsizes lead to an accelerated rate in the limit as the number of iterations $n \to \infty$. The same analysis arguments can also establish non-asymptotic convergence rates that apply for any finite $n$ and hold with high probability. Indeed, recall from~\eqref{eq:rate-main-proof} that our argument in \S\ref{sec:separable} relates the convergence rate (for any finite $n$) to an empirical sum, namely
\begin{align}
	\Bigg|\frac{x_{n} - x^*}{x_0 - x^*} \Bigg|^{1/n}
	=
	\exp\left( \frac{1}{n} \sum_{t=0}^{n-1} \log |1 - \beta_t^{-1} \lambda_t| \right)\,.
	\label{eq:rate-hp}
\end{align}
Now, instead of using the Law of Large Numbers to argue that this empirical sum converges to its expectation (giving the accelerated rate), one can use concentration-of-measure to argue about the deviations from its expectation.

Below we state such a result. This is established by bounding the variance of the aforementioned empirical sum via a martingale-type argument and applying Chebyshev's inequality; and then extending this convergence rate to general multivariate objectives $f$ by a union bound. Full proof details are deferred to Appendix~\ref{app:hp}. We note that one can show more refined bounds by bounding higher-order moments or the moment generating function (see, e.g., the textbooks~\citep{van2014probability,vershynin2018high,dembo2009large}), but that requires computing involved logarithmic integrals and thus for simplicity of exposition, here we state only this simple illustrative result.

\begin{theorem}[Non-asymptotic version of Theorem~\ref{thm:sep:main}]\label{thm:sep:hp}
	Consider the setup of Theorem~\ref{thm:sep:main}. For any deviation $\eps > 0$ and error probability $\delta > 0$, 
	\begin{align}
		\Prob\left( \left(\frac{\|x_{n} - x^*\|}{\|x_0 - x^*\|} \right)^{1/n}  \geq e^{\eps} \Racc\right) \leq \delta
	\end{align}
	if $n \geq \left( \pi^2 + o_{\kappa}(1) \right) \frac{d}{\delta \eps^2}$.
\end{theorem}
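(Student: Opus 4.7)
The plan is to convert the asymptotic argument of Section~\ref{sec:separable} into a concentration bound by applying Chebyshev's inequality to the same empirical sum that governed the rate, and then handling multivariate $f$ by a union bound over coordinates. By the reduction in \S\ref{ssec:sep:sep}, after the isometric change of basis $U$ a separable $f$ reduces to $d$ univariate problems sharing a common stepsize sequence $\{\beta_t\}$. A coordinate-wise comparison
\[
\left(\frac{\|x_n - x^*\|}{\|x_0 - x^*\|}\right)^{1/n} \leq \max_{i \in S} \left(\frac{|[x_n - x^*]_i|}{|[x_0 - x^*]_i|}\right)^{1/n}
\]
(which follows because $\|x_0-x^*\|^2 = \sum_i [x_0-x^*]_i^2$) means that a per-coordinate failure probability of $\delta/d$ suffices; this is where the factor of $d$ in the sample complexity enters.

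For the univariate step, I would reuse the decomposition~\eqref{eq:rate-main-proof} in the form
\[
\left(\frac{|x_n - x^*|}{|x_0 - x^*|}\right)^{1/n} = \Racc \cdot \exp\!\left(\frac{1}{n}\sum_{t=0}^{n-1} Y_t\right), \qquad Y_t := \log|1 - \beta_t^{-1}\lambda_t| - \log \Racc,
\]
where $\lambda_t := f'(x_t)/(x_t - x^*) \in [m,M]$ is $\mathcal{F}_{t-1}$-measurable with $\mathcal{F}_{t-1} := \sigma(\beta_0,\dots,\beta_{t-1})$. The equalization property (Lemma~\ref{lem:arc-equalization}) gives $\E[Y_t \mid \mathcal{F}_{t-1}] = 0$, so $\{Y_t\}$ is a martingale difference sequence, and the target event is exactly $\{\tfrac{1}{n}\sum_{t=0}^{n-1} Y_t \geq \eps\}$.

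The main technical step is a variance bound. Since $\beta_t$ is drawn freshly from $\ArcmM$ independently of $\mathcal{F}_{t-1}$,
\[
\E[Y_t^2 \mid \mathcal{F}_{t-1}] = \Var_{\beta \sim \ArcmM}\!\bigl[\log|1 - \beta^{-1}\lambda|\bigr]\Big|_{\lambda=\lambda_t} \leq \sigma_\kappa^2 := \sup_{\lambda \in [m,M]} \Var_{\beta \sim \ArcmM}\!\bigl[\log|1 - \beta^{-1}\lambda|\bigr].
\]
I would evaluate $\sigma_\kappa^2$ using the Chebyshev substitution $\beta = \tfrac{M+m}{2} + \tfrac{M-m}{2}\cos\theta$ with $\theta \sim \mathrm{Uniform}[0,\pi]$ (which, as in Figure~\ref{fig:cheby}, produces the Arcsine law), reducing the problem to a one-dimensional integral of $\log^2$ over $[0,\pi]$ that is classical in the logarithmic potential theory of an interval. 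The claim is that this integral yields $\sigma_\kappa^2 = \pi^2 + o_\kappa(1)$ as $\kappa \to \infty$; this estimate is the only genuinely new computation and is the main obstacle in the argument.

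Given the variance bound, orthogonality of martingale differences gives $\Var(\sum_{t=0}^{n-1} Y_t) = \sum_{t=0}^{n-1} \E[Y_t^2] \leq n \sigma_\kappa^2$, so Chebyshev's inequality yields
\[
\Prob\!\left(\frac{1}{n}\sum_{t=0}^{n-1} Y_t \geq \eps\right) \leq \frac{\sigma_\kappa^2}{n \eps^2} = \frac{\pi^2 + o_\kappa(1)}{n \eps^2}.
\]
Requiring this to be at most $\delta/d$ and union-bounding over the $d$ coordinate problems produces the announced sample complexity $n \geq (\pi^2 + o_\kappa(1))\, d/(\delta \eps^2)$. Everything outside the variance calculation is a direct repackaging of the martingale structure already exploited in the proof of Theorem~\ref{thm:sep:main}, with Chebyshev replacing the Martingale Convergence Theorem and Kronecker's Lemma.
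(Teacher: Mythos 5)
Your proposal is correct and mirrors the paper's proof exactly: coordinate-wise reduction with a union bound giving the factor $d$, the martingale-difference structure of $Y_t$ from equalization, orthogonality of the $Y_t$ (stated in the paper as $\Cov(Z_s,Z_t)=0$, Lemma~\ref{lem:cov}), and Chebyshev's inequality. The variance bound $\sup_{\lambda \in [m,M]} \Var_{\beta \sim \ArcmM}\bigl(\log|1-\beta^{-1}\lambda|\bigr) \leq \pi^2 + o_\kappa(1)$ that you flag as the main remaining computation is not derived in the paper either---it is imported from the integral calculations of~\citep[Appendix~5]{kalousek}---so simply invoking that reference closes the argument without redoing the Chebyshev-substitution integral.
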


This implies that for a typical run, the convergence rate is close to the accelerated rate; see Figure~\ref{fig:runs} for an illustration.

\subsection{Stability}\label{ssec:discussion:inexact}

Throughout, we have assumed for simplicity that all calculations are performed with exact gradients and in exact arithmetic. It is of practical importance to understand how inexactness affects convergence, and there is a long line of work on such questions for first-order algorithms, see, e.g.,~\citep{de2020worst, polyak1971convergence,devolder2014first,d2008smooth,schmidt2011convergence,LebedevFinogenov71,AgarwalGoelZhang} and the references within. There are many models of inexact computation. Here, as an illustrative example, we study one common model: the \emph{multiplicative error model for gradients}~\citep{de2020worst, polyak1971convergence}. Specifically, suppose that all calculations are still performed in exact arithmetic, but GD performs updates using an estimate $\tilde{g}_t$ of the true gradient $ \nabla f(x_t)$, where
\begin{align}
	\|\tilde{g}_t - \nabla f(x_t)\|  \leq \eps \|\nabla f(x_t)\|\,,
	\label{eq:inexact}
\end{align}
for some inexactness parameter $\eps \in (0,1)$. This model of inexact gradients does not align with separability, hence we state the following results for univariate objectives for simplicity. It is an interesting question to extend beyond.

\begin{theorem}[Inexact-gradient version of Theorem~\ref{thm:sep:main}]\label{thm:inexact}
	Consider the setup of Theorem~\ref{thm:sep:main} for $f$ univariate.  Suppose that GD updates in each iteration using $\tilde{g}_t$ rather than $\nabla f(x_t)$, i.e., 
	\begin{align}
		x_{t+1} = x_t - \alpha_t \tilde{g}_t\,,
	\end{align} 
	where $\tilde{g}_t$ are inexact gradients satisfying the multiplicative-error assumption~\eqref{eq:inexact}. Then the convergence rate of GD with i.i.d.\ Arcsine stepsizes is almost surely bounded by
	\begin{align}
		\limn \left( \frac{\|x_n - x^*\|}{\|x_0 - x^*\|} \right)^{1/n} \leq \;\;
		\underbrace{\frac{1}{1 + \tilde{\eps} - \sqrt{(1 + \tilde{\eps})^2 - 1}}}_{\text{slowdown from inexact gradients}}
		\cdot
		\underbrace{\frac{\sqrt{\kappa} - 1}{\sqrt{\kappa} + 1}}_{\text{accelerated rate }\Racc}
		\, \text{ where } \quad \tilde{\eps} := \frac{2M}{M-m}\,\eps
		\,.
		\label{eq:thm-inexact}
	\end{align}
	Moreover this bound is tight, in that it holds with equality for some choice of $f$ and inexact gradients. 
\end{theorem}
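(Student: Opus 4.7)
The plan is to adapt the proof of Theorem~\ref{thm:sep:main} so that, in the univariate reduction, the adversarial curvature $\tilde\lambda_t$ is allowed to range over a slightly wider interval than $[m,M]$, and then to quantify the resulting slowdown via a classical logarithmic-potential calculation at points outside the support of the Arcsine measure. After the reduction to the univariate setting via separability (as in \S\ref{ssec:sep:sep}), I would write the inexact gradient as $\tilde g_t = (1+\delta_t) f'(x_t)$ with $|\delta_t| \le \eps$, so that the GD update becomes
\begin{align*}
x_{t+1} - x^* = \bigl(1 - \beta_t^{-1} \tilde\lambda_t\bigr)(x_t - x^*), \qquad \tilde\lambda_t := (1+\delta_t)\lambda_t,
\end{align*}
where $\lambda_t := f'(x_t)/(x_t - x^*) \in [m,M]$ by Lemma~\ref{lem:sep:uni:helper}. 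Crucially, $\tilde\lambda_t$ is still adapted to the filtration $\{\cA_t\}$ generated by past inverse stepsizes, but now lies in the enlarged interval $J_\eps := [(1-\eps)m,\, (1+\eps)M]$ rather than in $[m,M]$.

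The new technical ingredient is the evaluation of $\E_{\beta \sim \ArcmM} \log|1 - \beta^{-1}\lambda|$ for $\lambda$ outside $[m,M]$. Via the Chebyshev substitution $\lambda = \tfrac{M+m}{2} + \tfrac{M-m}{2}\zeta$ (and analogously for $\beta$), this reduces to the classical logarithmic potential of the Arcsine equilibrium measure on $[-1,1]$:
\begin{align*}
\int_0^\pi \log|\zeta - \cos\theta|\, \frac{d\theta}{\pi} \;=\; \log\Bigl|\tfrac{\zeta + \sqrt{\zeta^2-1}}{2}\Bigr| \qquad \text{for } |\zeta| > 1,
\end{align*}
which, combined with the equalization value (Lemma~\ref{lem:arc-equalization}), yields
\begin{align*}
\E_{\beta \sim \ArcmM} \log|1 - \beta^{-1}\lambda| \;=\; \log \Racc + \log\bigl|\zeta + \sqrt{\zeta^2-1}\bigr|, \qquad \lambda \notin [m,M].
\end{align*}
Maximizing over $\lambda \in J_\eps$ amounts to maximizing $|\zeta|$: the endpoint $\lambda = (1+\eps)M$ gives $\zeta = 1+\tilde\eps$ with $\tilde\eps = 2M\eps/(M-m)$, while $\lambda = (1-\eps)m$ gives a strictly smaller $|\zeta|$. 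So the worst-case conditional expectation is $\log\Racc + \log(1+\tilde\eps + \sqrt{(1+\tilde\eps)^2 - 1})$, and rationalizing the identity $(1+\tilde\eps + \sqrt{(1+\tilde\eps)^2 - 1})(1+\tilde\eps - \sqrt{(1+\tilde\eps)^2 - 1}) = 1$ matches the rate in~\eqref{eq:thm-inexact}. Plugging this bound into the martingale SLLN argument of \S\ref{ssec:sep:uni} (Lemma~\ref{lem:L1-martingale} and Kronecker's lemma) goes through verbatim, upgrading the a.s.\ limit from equality to the desired upper bound. Tightness is witnessed by the univariate quadratic $f(x) = \tfrac{M}{2}(x-x^*)^2$ paired with the adversarial inexact gradient $\tilde g_t = (1+\eps) f'(x_t)$: this pins $\tilde\lambda_t \equiv (1+\eps)M$, so $\zeta = 1+\tilde\eps$ is attained in every step, and the classical i.i.d.\ Kolmogorov SLLN gives exact equality in the rate.

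The principal obstacle is the potential-theoretic identity for $\E_\beta \log|1 - \beta^{-1}\lambda|$ outside the support of the Arcsine measure: although classical, it is the genuinely new computation beyond the pure equalization that suffices for Theorem~\ref{thm:sep:main}, and extracting the correct Chebyshev branch so that $|\zeta + \sqrt{\zeta^2-1}|$ comes out uniformly in the sign of $\zeta$ requires some care. A secondary subtlety is that the $\ell_2$ error bound $\|\tilde g_t - \nabla f(x_t)\| \le \eps\|\nabla f(x_t)\|$ does not directly yield a per-coordinate multiplicative error in the separable reduction, so rather than analyzing each coordinate in isolation I would track $\|x_t - x^*\|$ via the operator bound $\|x_{t+1} - x^*\| \le (\|I - \alpha_t H_t\| + \alpha_t \eps \|H_t\|)\|x_t - x^*\|$ with $H_t$ the diagonal average-Hessian. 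A direct calculation verifies that $\max_{\lambda \in [m,M]}|1 - \alpha\lambda| + \alpha\eps M$ coincides pointwise with $\max_{\tilde\lambda \in \{m - \eps M,\, M + \eps M\}}|1 - \alpha\tilde\lambda|$; since both exterior endpoints give the same $|\zeta| = 1+\tilde\eps$ under the potential formula, one can split the $\alpha$-axis into the two regions where each endpoint is dominant and run the martingale argument on each region separately to recover the same upper bound.
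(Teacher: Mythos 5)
Your univariate argument is essentially the paper's: write the update as $x_{t+1}-x^* = (1-\beta_t^{-1}\tilde\lambda_t)(x_t-x^*)$ with $\tilde\lambda_t$ adapted to the past inverse stepsizes but now lying outside $[m,M]$, extend the equalization formula via the exterior logarithmic potential $\log|z+\sqrt{z^2-1}|$, maximize over the enlarged interval to get $|z|=1+\tilde\eps$ at the endpoint $(1+\eps)M$, and run the same martingale SLLN. The paper phrases the perturbation additively, writing $E_t := (\tilde g_t - f'(x_t))/(x_t-x^*)$ with $|E_t|\le M\eps$ (so $\tilde\lambda_t\in[m-M\eps,\,M+M\eps]$), whereas you use the multiplicative form $\tilde\lambda_t = (1+\delta_t)\lambda_t\in[(1-\eps)m,(1+\eps)M]$; your interval is slightly tighter but the binding endpoint $(1+\eps)M=M+M\eps$ is identical, so the answer is the same. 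Your tightness witness $f=\tfrac{M}{2}x^2$ with $\tilde g_t=(1+\eps)f'(x_t)$ also matches.

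The one place you deviate---and where your proposal has a genuine gap---is the multivariate reduction. You correctly observe that the $\ell_2$ condition~\eqref{eq:inexact} does not imply a per-coordinate multiplicative error for the decoupled scalar problems; the paper glosses over this, simply asserting that the reduction of \S\ref{ssec:sep:sep} applies. Your attempted repair via the operator bound $\|x_{t+1}-x^*\|\le(\|I-\alpha_t H_t\|+\alpha_t\eps M)\|x_t-x^*\|$ does not recover the stated constant. Your pointwise identity $\max_{\lambda\in[m,M]}|1-\alpha\lambda|+\alpha\eps M = \max_{\tilde\lambda\in\{m-\eps M,\,M+\eps M\}}|1-\alpha\tilde\lambda|$ is correct, but the martingale SLLN applied to these terms yields the rate $\exp\bigl(\E_{\beta\sim\ArcmM}\log\max_{\tilde\lambda}|1-\beta^{-1}\tilde\lambda|\bigr)$, and $\E_\beta\bigl[\log\max(a_\beta,b_\beta)\bigr]$ is \emph{strictly larger} than $\max\bigl(\E_\beta\log a_\beta,\,\E_\beta\log b_\beta\bigr)$ here: which endpoint dominates depends on whether $\beta$ is above or below $(M+m)/2$, and both cases occur with positive Arcsine probability, so the excess $\E_\beta[(\log|1-\beta^{-1}\tilde\lambda_+|-\log|1-\beta^{-1}\tilde\lambda_-|)\mathds{1}_{\beta<(M+m)/2}]$ is strictly positive. ``Splitting the $\alpha$-axis and running the martingale argument on each region'' does not cure this, because conditioning $\beta$ to lie in a sub-interval destroys the equalization property on which the whole argument rests. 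The paper's proof applies the univariate bound coordinate-by-coordinate, which does quietly assume a per-coordinate error model; if you want to stay with the genuine $\ell_2$ assumption, a different vector-level argument is needed, and the operator-norm route as you sketch it loses a factor.
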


The proof is deferred to Appendix~\ref{app:inexact}, since it is similar to that of Theorem~\ref{thm:sep:main}. Conceptually, the only difference is that the average Hessians $\lambda_t \in [m,M]$ are now perturbed multiplicatively, and thus the effective curvatures $\tilde{\lambda}_t$ lie in the enlarged interval $[(1-\eps)m,(1+\eps)M]$. The convergence rate is therefore governed by $\exp( \E_{\beta \sim \ArcmM} \log | 1- \beta^{-1} \tilde{\lambda}|)$ where $\tilde{\lambda}$ can now be slightly outside the interval $[m,M]$. When the inverse stepsizes are still drawn from $\ArcmM$, the worst perturbation occurs at the upper endpoint $(1+\eps)M$, which gives the bound in Theorem~\ref{thm:inexact}.

We remark that for small error $\eps$, the slowdown factor in Theorem~\ref{thm:inexact} is of the order
\begin{align}
	\frac{1}{1 + \tilde{\eps} - \sqrt{(1 + \tilde{\eps})^2 - 1}}
	= 1 + \sqrt{2 \tilde{\eps}} + \Theta(\tilde{\eps})
	= 1 + \sqrt{\frac{4M\eps}{M-m}} + \Theta(\eps)\,.
    \label{eq:slowdown-1}
\end{align}
In other words, for large condition number $\kappa \gg 1$, the number of iterations required by GD increases by a multiplicative factor of roughly $1 + 2\sqrt{\eps}$ when using inexact gradients. Note that this slowdown is more than the $(1 + \Theta(\eps))$ slowdown for constant-stepsize schedules~\citep[Theorem 5.3]{de2020worst}. This is a manifestation of the general phenomenon that accelerated algorithms are often less stable to model mis-specification, see e.g.,~\citep{devolder2014first}.

The above discussion analyzes the effect of inexact gradients for the i.i.d.\ stepsize schedule that is optimized for exact gradients; one can also ask for the i.i.d.\ stepsize schedule that is optimized for this setting of inexact gradients. This follows from a minor adaptation of our techniques. 

\begin{theorem}[Optimized i.i.d.\ stepsize schedules for inexact gradients]\label{thm:inexact-reoptimized}
	Consider the setup of Theorem~\ref{thm:inexact}. Let $\tilde{m} := (1-\eps)m$, $\tilde{M} := (1 + \eps)M$, and $\tilde{\kappa} := \tfrac{\tilde{M}}{\tilde{m}} = \tfrac{1+\eps}{1-\eps} \kappa$. By using i.i.d.\ inverse stepsizes $\alpha_t^{-1}$ from $\Arc{\tilde{m}}{\tilde{M}}$, GD converges at a rate that is almost surely bounded by
	\begin{align}
		\limn \left( \frac{\|x_n - x^*\|}{\|x_0 - x^*\|} \right)^{1/n} \leq 
        \frac{\sqrt{\tilde{\kappa}} - 1}{\sqrt{\tilde{\kappa}} + 1}
		\,.
		\label{eq:thm-inexact-reoptimized}
	\end{align}
    % where this convergence in the almost sure and $L^1$ sense. 
    Moreover, this bound is tight in that it holds with equality for some choice of $f$ and inexact gradients, and also this is the unique stepsize distribution for which GD achieves this rate.
\end{theorem}

The key idea is that the inexact gradients effectively worsen the parameters for strong convexity (from $m$ to $\tilde{m}$) and smoothness (from $M$ to $\tilde{M}$). Theorem~\ref{thm:inexact-reoptimized} then follows by appealing to our main result, Theorem~\ref{thm:sep:main}, for this enlarged range. Proof details are deferred to Appendix~\ref{app:inexact}. By a Taylor expansion in $\eps$, the resulting convergence rate $\tilde{R}_{\mathrm{acc}} :=  \tfrac{\sqrt{\tilde{\kappa}}-1}{\sqrt{\tilde{\kappa}}+1}$ has a slowdown factor of
\begin{align}
    \frac{\tilde{R}_{\mathrm{acc}}}{\Racc}
    =
    1 + \frac{2\sqrt{\kappa}}{\kappa-1}\eps + O(\eps^2)
    =
    1 + \frac{2\sqrt{Mm}}{M-m}\eps + O(\eps^2)\,.
    \label{eq:slowdown-2}
\end{align}
This $1+\Theta(\eps)$ slowdown achieved by re-optimizing the stepsizes improves over the $1+\Theta(\sqrt{\eps})$ slowdown for using the original $\ArcmM$ stepsizes, c.f.,~\eqref{eq:slowdown-1}.

\subsection{Game-theoretic connections and lower bounds}\label{ssec:discussion:lb}

This paper considers random stepsizes for the algorithmic purpose of accelerating GD. It turns out that this randomized perspective is also helpful for the dual problem of constructing algorithmic lower bounds. We describe this duality here. 

\par Recall from \S\ref{sec:hi} that the optimal distribution $\mu$ over i.i.d.\ inverse stepsizes $\beta = \alpha^{-1}$ has the following extremal characterization:
\begin{align}
	\inf_{\mu \in \cM(E)} \; \sup_{\lambda \in E}\;  \E_{\beta \sim \mu} \log \abs{1 - \beta^{-1}\lambda}\,.
	\label{eq:games:1}
\end{align}
Above, $\mu$ is constrained to $E := [m,M]$ without loss of generality (since this holds at optimality), $\lambda \in E$ should be interpreted as the curvature of a ``worst-case'' univariate quadratic function $f(x) = \tfrac{\lambda}{2}x^2$, and $\E_{\beta \sim \nu} \log |1 - \beta^{-1} \lambda|$ is the logarithm of the asymptotic convergence rate.

\par To bridge this problem of accelerating GD with the problem of constructing lower bounds, we consider a standard two-player zero-sum game where players choose their actions simultaneously and independently.
In our case, one player chooses inverse stepsizes $\beta^{-1}$ and the other player chooses a quadratic function (parameterized by its curvature $\lambda)$. By lifting to mixed (i.e., randomized) strategies, one player chooses a distribution $\mu$ over stepsizes, the other chooses a distribution $\nu$ over quadratic functions, with payoff given by ${\cal R}(\mu,\nu) := 
\E_{\beta \sim \mu, \, \lambda \sim \nu} \log \abs{1 - \beta^{-1}\lambda}$. 

\par The key observation is that due to the specific payoff function, the game is essentially symmetric between the two players (modulo inversion), and as a consequence, so are the optimal mixed strategies for both players. Below, let $\FlippedArcmM$ denote the law of $\lambda$ when $\lambda^{-1} \sim \Arc{M^{-1}}{m^{-1}}$. 

\begin{lemma}[Optimal distribution for worst-case functions]\label{lem:flipped-arc}
    The (minimax) value of this game is $\log \Racc$, i.e., we have the saddle point condition
\[
\sup_\nu {\cal R}(\mu^*,\nu)
=
{\cal R}(\mu^*,\nu^*)
=
\inf_\mu {\cal R}(\mu,\nu^*)
\]
with
\[
{\cal R}(\mu^*,\nu^*) = 
\E_{\beta \sim \mu^*, \, \lambda \sim \nu^*} \log \abs{1 - \beta^{-1}\lambda} =
\log \Racc,
\]
and the unique optimal mixed strategies are $\mu^* = \ArcmM$ and $\nu^* = \FlippedArcmM$. 

\begin{remark}[Game-theoretic interpretation of equalization]
    Notice that both optimal distributions have full support. As a consequence,  at  equilibrium a player is indifferent among all of their pure actions, i.e., the equalization property (Lemma~\ref{lem:arc-equalization}) must hold. 
\end{remark}

\end{lemma}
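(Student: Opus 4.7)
The plan is to reduce everything to Lemmas~\ref{lem:arc-extremal} and~\ref{lem:arc-equalization} by exploiting the inversion symmetry of the payoff. First I would record the key algebraic identity $\log|1-\beta^{-1}\lambda| = \log|1 - \gamma^{-1}\eta|$ whenever $\gamma := \lambda^{-1}$ and $\eta := \beta^{-1}$. This encodes the ``modulo inversion'' symmetry noted in the statement: if $\beta,\lambda$ range over $[m,M]$, then $\gamma,\eta$ range over $[M^{-1},m^{-1}]$, an interval with the same condition number $\kappa$ (so the same $\Racc$). In particular, by the definition of $\nu^*$ in the statement, $\lambda \sim \FlippedArcmM$ is exactly the pushforward of $\gamma \sim \Arc{M^{-1}}{m^{-1}}$ under $\gamma \mapsto \gamma^{-1}$.

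Next I would establish primal and dual equalization separately. The primal equalization $\E_{\beta\sim\mu^*}\log|1-\beta^{-1}\lambda| = \log\Racc$ for every $\lambda \in [m,M]$ is exactly Lemma~\ref{lem:arc-equalization} and implies $\sup_\nu {\cal R}(\mu^*,\nu) = \log\Racc$. For the dual equalization $\E_{\lambda\sim\nu^*}\log|1-\beta^{-1}\lambda| = \log\Racc$ for every $\beta \in [m,M]$, I would apply Lemma~\ref{lem:arc-equalization} on the inverted interval $[M^{-1},m^{-1}]$: with $\gamma\sim\Arc{M^{-1}}{m^{-1}}$ and $\eta = \beta^{-1} \in [M^{-1},m^{-1}]$, the lemma yields $\E_\gamma \log|1-\gamma^{-1}\eta| = \log\Racc$, which equals the desired quantity by the identity. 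This gives $\inf_\mu {\cal R}(\mu,\nu^*) = \log\Racc$. Together with Lemma~\ref{lem:arc-extremal}, which also identifies the minimax value as $\log\Racc$, this establishes that $(\mu^*,\nu^*)$ is a saddle point with value $\log\Racc$.

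For uniqueness, $\mu^* = \ArcmM$ is immediate from Lemma~\ref{lem:arc-extremal}. For $\nu^*$, suppose $\nu'$ is any other maximin strategy, so $\E_{\lambda\sim\nu'}\log|1-\beta^{-1}\lambda| \geq \log\Racc$ for every $\beta \in [m,M]$. Integrating against $\mu^*$, exchanging order by Fubini, and applying the primal equalization pointwise in $\lambda$ collapses both sides to $\log\Racc$, forcing $\E_{\lambda\sim\nu'}\log|1-\beta^{-1}\lambda| = \log\Racc$ for $\mu^*$-almost-every $\beta$ and hence, by continuity in $\beta$ and the full support of $\mu^*$, for every $\beta \in [m,M]$. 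Pushing $\nu'$ forward under $\lambda\mapsto\lambda^{-1}$ and re-applying the identity converts this into the statement that the pushforward solves the extremal problem of Lemma~\ref{lem:arc-extremal} on the inverted interval $[M^{-1},m^{-1}]$; uniqueness in that lemma then pins the pushforward down as $\Arc{M^{-1}}{m^{-1}}$, so $\nu' = \FlippedArcmM$.

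The main obstacle is conceptual rather than technical: one needs to recognize that the game has an inversion symmetry mapping the primal problem on $[m,M]$ to the same extremal problem on $[M^{-1},m^{-1}]$, so the second player's optimal mixed strategy is just the first player's optimal distribution on the inverted interval, transported back via $\gamma\mapsto\gamma^{-1}$. Once that symmetry is identified, every remaining step is a direct invocation of Lemmas~\ref{lem:arc-extremal} and~\ref{lem:arc-equalization}.
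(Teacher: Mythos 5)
Your proof is correct and takes essentially the same route as the paper's proof sketch: exploit the $\beta \leftrightarrow \lambda^{-1}$ inversion symmetry, derive both halves of the saddle-point condition from the equalization property (Lemma~\ref{lem:arc-equalization}) applied on the original and inverted intervals, and obtain uniqueness of $\nu^*$ by passing to the inverted extremal problem and invoking the uniqueness in Lemma~\ref{lem:arc-extremal}. Your Fubini-plus-nonnegativity argument is a cleaner spelling-out of the paper's informal ``probabilistic method'' step; the one shared soft spot (which the paper explicitly flags as ``forgoing measure-theoretic technicalities'' and defers to the cited thesis) is the upgrade from ``$\cR(\delta_\beta,\nu')=\log\Racc$ for $\mu^*$-a.e.\ $\beta$'' to ``for every $\beta\in[m,M]$'' — logarithmic potentials of an arbitrary competitor $\nu'$ are only lower semi-continuous on its support, not continuous, so strictly speaking this needs the domination/maximum-principle machinery of Appendix~\ref{app:arc-extremal} rather than the bare continuity you invoke.
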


\begin{proof}[Proof sketch for Lemma~\ref{lem:flipped-arc}]
    Lemma~\ref{lem:arc-extremal} implies that $\mu^* = \ArcmM$ is the unique optimal solution for the stepsize player, and that the value of the game is $\log \Racc$. See~\citep[Chapter 3.3.3]{altschuler2018greed} for a rigorous proof that the unique optimal solution for the other player is $\nu^* = \FlippedArcmM$; here we just provide intuition in terms of symmetry of the game. The underlying idea is to consider a ``flipped game'' in which strategies correspond to the inverses of strategies in the original game, i.e., one player chooses stepsizes $\tilde{\beta} = \beta^{-1} \in \tilde{E}$, and the other player chooses a quadratic function parameterized by its inverse curvature $\tilde{\lambda} = \lambda^{-1} \in \tilde{E}$. Here $\tilde{E} := [\tilde{m},\tilde{M}]$ where $\tilde{m} := M^{-1}$ and $\tilde{M} := m^{-1}$. By lifting to mixed strategies, one player chooses a distribution $\tilde{\mu}$ over stepsizes $\tilde{\beta} \in \tilde{E}$, and the other player chooses a distribution $\tilde{\nu}$ over quadratic functions $\tilde{\lambda} \in \tilde{E}$, with payoff given by 
    \[
        {\cal \tilde{R}}(\tilde{\mu},\tilde{\nu}) :=  \E_{\tilde{\beta} \sim \tilde{\mu}, \tilde{\lambda} \sim \tilde{\nu}} \log |1 - \tilde{\lambda}^{-1} \tilde{\beta}|\,.
    \]
    This flipped game is equivalent to the original game due the invariance of both the payoff function $\tilde{\cR}(\tilde{\mu},\tilde{\nu}) = \cR(\mu,\nu)$ and the condition number $\tfrac{\tilde{M}}{\tilde{m}} =\tfrac{M}{m}$ with respect to this inversion of the strategies. Because $\ArcmM$ is the unique optimal solution $\mu^*$ in the original game, $\FlippedArcmMt$ is the unique optimal solution $\tilde{\mu}^*$ in the flipped game; in order to show that $\FlippedArcmM$ is the unique optimal solution $\nu^*$ in the original game, it suffices to show that $\ArcmMt$ is the unique optimal solution $\tilde{\nu}^*$ in the flipped game. To this end, it suffices to show that an optimal solution $\tilde{\nu}^*$ must satisfy the equalization property in the flipped game, i.e., 
    \begin{align}
        \cR(\delta_{\tilde{\beta}}, \tilde{\nu}^*) = \E_{\tilde{\lambda} \sim \tilde{\nu}^*} \log \abs{1 - \tilde{\lambda}^{-1} \tilde{\beta}}
        \nonumber
    \end{align}
    is independent of $\tilde{\beta} \in \tilde{E}$. This suffices since the equalization property uniquely defines the Arcsine distribution by classical results from logarithmic potential theory, see Appendix~\ref{app:pot}. 

    \par Why must an optimal solution $\tilde{\nu}^*$ for the flipped game satisfy this equalization property? An informal argument is as follows. By definition of $\tilde{\cR}$, the symmetry between $\tilde{\cR}$ and $\cR$, and then the equalization property of the Arcsine distribution (Lemma~\ref{lem:arc-equalization}) applied to $\mu^*$, 
    \begin{align*}
        \E_{\tilde{\beta} \sim \tilde{\mu}} \cR(\delta_{\tilde{\beta}}, \tilde{\nu}^*)
        =
        \tilde{\cR}(\tilde{\mu}^*, \tilde{\nu}^*)
        =
        \cR(\mu^*, \nu^*)
        =
        \log \Racc\,.
    \end{align*}  
    Thus, forgoing measure-theoretic technicalities, if $\tilde{\nu}^*$ did not satisfy the equalization property, then by the probabilistic method, there would exist some $\tilde{\beta}$ for which $\tilde{\beta} \mapsto \cR(\delta_{\tilde{\beta}}, \tilde{\nu}^*)$ is strictly less than its average $\E_{\tilde{\beta} \sim \tilde{\mu}} \cR(\delta_{\tilde{\beta}}, \tilde{\nu}^*) = \log \Racc$. But this means that the value of the flipped game is less than $\log \Racc$, hence also for the original game (since their values are identical), contradiction.
\end{proof}

As a corollary, Lemma~\ref{lem:flipped-arc} gives a new proof that $\tfrac{\sqrt{\kappa} - 1}{\sqrt{\kappa} + 1}$ is the fastest possible convergence rate for GD. 
This result is classically due to~\citep{nem-yudin}, who proved this by constructing a single, infinite-dimensional ``worst-case function'' for which any Krylov-subspace algorithm converges slowly. Here we construct a ``worst-case distribution'' over univariate functions for which any GD stepsize schedule converges slowly in expectation. 
By the probabilistic method, for any specific GD stepsize schedule, we can then conclude the existence of a single univariate function for which convergence is slow. For simplicity, we state this lower bound here for i.i.d.\ stepsizes; this can be extended to arbitrary GD stepsize schedules and also to non-asymptotic lower bounds, see~\citep[Chapter 5]{altschuler2018greed}.

\begin{cor}\label{cor:lb}
	For any distribution of i.i.d.\ GD stepsizes, there exists a univariate quadratic function that is $m$-strongly convex and $M$-smooth such that the asymptotic convergence rate
	\begin{align*}
		\limn \E \left[ \left( \frac{\|x_n - x^*\|}{\|x_0 - x^*\|} \right)^{1/n} \right] \geq \frac{\sqrt{\kappa} - 1}{\sqrt{\kappa} + 1}\,.
	\end{align*}
\end{cor}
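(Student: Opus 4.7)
The plan is to use Lemma~\ref{lem:flipped-arc} together with an averaging (probabilistic method) argument. Fix any distribution $\mu$ over i.i.d.\ inverse stepsizes $\beta_t = \alpha_t^{-1}$. For each curvature $\lambda \in [m,M]$, consider the univariate quadratic $f_\lambda(x) = \tfrac{\lambda}{2}(x-x^*)^2$, and define
\[
R(\lambda) \defeq \exp\!\left(\E_{\beta \sim \mu}\log\abs{1 - \beta^{-1}\lambda}\right).
\]
My first step is to argue that $R(\lambda)$ is exactly the asymptotic expected convergence rate on $f_\lambda$: running GD yields $x_n - x^* = \prod_{t=0}^{n-1}(1-\beta_t^{-1}\lambda)(x_0 - x^*)$, so
\[
\left(\frac{\abs{x_n - x^*}}{\abs{x_0 - x^*}}\right)^{1/n} = \exp\!\left(\frac{1}{n}\sum_{t=0}^{n-1}\log\abs{1-\beta_t^{-1}\lambda}\right).
\]
For fixed $\lambda$ the summands are i.i.d., so the Strong Law of Large Numbers gives almost-sure convergence of this quantity to $R(\lambda)$. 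The uniform integrability argument from the proof of Theorem~\ref{thm:sep:main} (the crude bound $\abs{1-\beta^{-1}\lambda} \leq \kappa - 1$) upgrades this to convergence in expectation, so $\limn \E[(\abs{x_n - x^*}/\abs{x_0 - x^*})^{1/n}] = R(\lambda)$.

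Next, I would invoke Lemma~\ref{lem:flipped-arc}. The saddle-point condition states $\inf_\mu \cR(\mu, \nu^*) = \cR(\mu^*, \nu^*) = \log \Racc$, where $\nu^* = \FlippedArcmM$. Unpacking the definition of $\cR$, this gives
\[
\E_{\lambda \sim \nu^*} \log R(\lambda)
= \E_{\lambda \sim \nu^*}\E_{\beta \sim \mu}\log\abs{1-\beta^{-1}\lambda}
= \cR(\mu,\nu^*) \geq \log \Racc.
\]
Now the probabilistic method delivers the conclusion: since the average of the random variable $\lambda \mapsto \log R(\lambda)$ under $\lambda \sim \nu^*$ is at least $\log \Racc$, there must exist some $\lambda^\dagger$ in the support of $\nu^*$ (which equals $[m,M]$, since $\nu^*$ is the FlippedArcsine distribution) for which $\log R(\lambda^\dagger) \geq \log \Racc$, and hence $R(\lambda^\dagger) \geq \Racc$. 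Taking $f = f_{\lambda^\dagger}$, which is $m$-strongly convex and $M$-smooth by construction, yields the desired lower bound.

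I do not anticipate a serious obstacle, since Lemma~\ref{lem:flipped-arc} already packages the hard content (identifying $\nu^*$ as the worst-case mixed strategy). The only technicality is justifying $\limn \E[\cdot] = R(\lambda)$, which is immediate from i.i.d.\ LLN plus the bounded-summand uniform integrability reused from \S\ref{ssec:sep:uni}. One minor care point is that the support of $\nu^* = \FlippedArcmM$ is the full interval $[m,M]$ (so the witnessing $\lambda^\dagger$ corresponds to an admissible quadratic in the allowed class); this is automatic from the definition of the FlippedArcsine distribution. If one wanted an explicit, rather than existential, worst-case $\lambda^\dagger$, one could instead appeal to the Arcsine equalization property (Lemma~\ref{lem:arc-equalization}) in the flipped game and note that for the specific $\mu = \ArcmM$, every $\lambda \in [m,M]$ already achieves $R(\lambda) = \Racc$ with equality.
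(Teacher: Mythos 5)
Your proposal takes essentially the same route as the paper: appeal to Lemma~\ref{lem:flipped-arc} to get $\cR(\mu,\nu^*)\geq\log\Racc$ for the worst-case distribution $\nu^*=\FlippedArcmM$, then invoke the probabilistic method to extract a single witnessing curvature $\lambda^\dagger$. The paper's proof is a two-sentence sketch that leaves exactly this argument implicit, and you have filled in the details correctly, in particular applying the probabilistic method to $\log R(\lambda)$ rather than to $R(\lambda)$ itself (which matters, since Jensen for $\exp$ runs in the wrong direction for the latter).

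One technical point to fix. To pass from a.s.\ convergence of $\bigl(\frac{|x_n-x^*|}{|x_0-x^*|}\bigr)^{1/n}$ to convergence of expectations, you reuse the uniform-integrability bound $|1-\beta^{-1}\lambda|\leq\kappa-1$. That bound is valid only for $\beta\in[m,M]$, i.e., for the Arcsine stepsize distribution, whereas Corollary~\ref{cor:lb} quantifies over an \emph{arbitrary} i.i.d.\ stepsize distribution $\mu$, whose inverse stepsizes need not lie in $[m,M]$ (e.g., very large stepsizes make $|1-\beta^{-1}\lambda|$ unbounded). Fortunately, you only need a one-sided bound, and there are two clean fixes: (i) Fatou's lemma applied to the nonnegative random variables $\bigl(\frac{|x_n-x^*|}{|x_0-x^*|}\bigr)^{1/n}$ directly gives $\liminf_n\E[\cdot]\geq R(\lambda)$; or (ii) Jensen's inequality for the convex function $\exp$ gives the non-asymptotic inequality $\E\bigl[\exp\bigl(\tfrac{1}{n}\sum_t\log|1-\beta_t^{-1}\lambda|\bigr)\bigr]\geq\exp\bigl(\E_{\beta\sim\mu}\log|1-\beta^{-1}\lambda|\bigr)=R(\lambda)$ for every $n$, which is actually the cleanest route since it bypasses the law of large numbers entirely. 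With either repair, the rest of your argument goes through unchanged.
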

\begin{proof}
	Lemma~\ref{lem:flipped-arc} shows that in expectation over $\lambda \sim \FlippedArcmM$, the asymptotic convergence rate is $\Racc$ when run on the quadratic function $f_{\lambda}(x) = \tfrac{\lambda}{2} x^2$ from initialization $x_0 = 1$. By the probabilistic method, there exists a $\lambda^*$ for which the asymptotic convergence rate on $f_{\lambda^*}(x)$ is at least this average value $\Racc$.
\end{proof}

\begin{remark}
	The lower bounds in this section can be interpreted as an instantiation of Yao's minimax principle~\citep{yao}: in order to lower bound the performance of a randomized algorithm (e.g., GD with the Arcsine Stepsize Schedule), we exhibit a  distribution over difficult inputs (namely quadratics with curvature $\lambda$ according to the FlippedArcsine distribution) for which any deterministic algorithm cannot perform well. This is implicit along the way in this section's development. 
\end{remark}

\section{Conclusion and future work}\label{sec:future}

This paper shows that randomizing stepsizes provides an alternative mechanism for accelerated convex optimization beyond the special case of quadratics. This raises several natural questions about the possible benefit of randomness for GD---or said equivalently, whether the Arcsine Stepsize Schedule can be derandomized. The current results differ on two fronts: 
Theorem~\ref{thm:sep:main} shows that random stepsizes achieve the fully accelerated rate $O(\kappa^{1/2} \log \tfrac{1}{\eps})$ on separable functions, whereas~\citep{alt23hedging1} shows that deterministic stepsize achieves the partially accelerated ``Silver'' rate $O(\kappa^{\log_{1+\sqrt{2}} 2} \log \tfrac{1}{\eps}) \approx O(\kappa^{0.78} \log \tfrac{1}{\eps})$  on general convex functions. These two sources of differences motivate the questions:

\begin{itemize}
    \item Is it possible to de-randomize Theorem~\ref{thm:sep:main}, i.e., construct a deterministic stepsize schedule which achieves the fully accelerated rate for separable functions? If so, a natural conjecture would be some ordering of the Chebyshev stepsize schedule. 
    \item Does the Arcsine Stepsize Schedule fully accelerate on general (non-separable) convex functions? If not, can any randomized stepsize schedule improve over the Silver Convergence Rate for general convex functions?
\end{itemize}

There are also many questions about extensions to other optimization settings of interest. 
For other settings, what is the optimal stepsize distribution if not Arcsine? In any setting, is there a provable benefit of using non-i.i.d.\ stepsizes? Can random stepsizes be alternatively understood through discretization analyses of continuous-time gradient flows, or via the continuized framework of~\citep{continuized}? While other mechanisms for acceleration (e.g., momentum) are now well understood~\citep{d2021acceleration}, little is known about acceleration via randomness.

    \paragraph*{Acknowledgements.} 
    % JMA acknowledges funding from a Sloan Research Fellowship and a Seed Grant Award from Apple. 
    We are grateful to Abraham Wyner for suggesting connections to repeated gambling, discussed in \S\ref{ssec:discussion:notions}.

	\newpage
	\appendix

\section{Connections to logarithmic potential theory}\label{app:pot}

Recall from the overview section \S\ref{sec:hi} that the optimal distribution $\mu$ for inverse stepsizes $\beta$ is given by the extremal problem
\begin{align}
	\inf_{\mu \in \cM(E)} \;\sup_{\lambda \in E} \;\E_{\beta \sim \mu} \log \abs{1 - \beta^{-1} \lambda }\,,
	\label{eq:extremal-app}
\end{align}
where $E := [m,M]$ and $\cM(E)$ denotes the probability distributions supported on $E$.\footnote{Although the stepsize problem was not originally written with this constraint in \S\ref{sec:hi}, it is easily seen that this constraint is satisfied at optimality and therefore does not change the extremal problem. See Observation~\ref{obs:constrained}.} Indeed, for quadratic optimization, this extremal problem exactly characterizes the asymptotic convergence rate (see \S\ref{ssec:hi:quad}); and for the more general setting of separable convex optimization, even though the relevant extremal problem is somewhat different, the solution is identical (see \S\ref{ssec:hi:convex}). 

In this appendix section, we connect this extremal problem over stepsize distributions with the extremal problem for electrostatic capacitors that is classically studied in potential theory. 
As we explain below, although these two problems seek probability distributions minimizing different objectives, the two problems are intimately linked because they have the same optimality condition: the optimal distribution $\mu$ must satisfy the equalization property from Lemma~\ref{lem:arc-equalization} (i.e., the objective $\E_{\beta \sim \mu} \log |1 - \beta^{-1}\lambda|$ has equal value for all $\lambda \in [m,M]$). Because this equalization property uniquely characterizes the distribution (a classical fact from potential theory, see Lemma~\ref{lem:pt:frostman-converse}), this means that the two problems have the same unique optimal solution. In this way, the equalization property plays the central role connecting these two problems.

This appendix section is organized as follows. In \S\ref{app:pot-electrostatics} we recall the electrostatics problem and highlight the syntactic similarities and differences to the stepsize problem. In \S\ref{app:pot-equalization} we explain the equalization property in the context of both problems and how it arises as the optimality condition. This is the key link between the two problems. In \S\ref{app:pot-proofs} we exploit this connection concretely by using the machinery of potential theory to solve the stepsize problem. This provides alternative proofs of Lemmas~\ref{lem:arc-extremal} and~\ref{lem:arc-equalization} that are significantly shorter than the integral calculations of~\citep{kalousek,pronzato11}.

\subsection{Extremal problem for electrostatic capacitors}\label{app:pot-electrostatics}

The extremal problem central to logarithmic potential theory is
\begin{align}
	\inf_{\mu \in \cM(E)} \iint \log \frac{1}{\abs{\lambda - \omega}} d\mu(\omega) d\mu(\lambda)\,,
	\label{eq:extremal-pot}
\end{align}
given a set $E \subset \C$.
This problem is sometimes called the electrostatics problem for a capacitor due to its physical interpretation: how should you place a distribution of repelling particles on $E$ so as to minimize the total repulsive energy?  Here the repulsive energy is measured via $$I(\mu) := \iint \log \frac{1}{\abs{\lambda - \omega}} d\mu(\omega) d\mu(\lambda) =  \E_{\lambda \sim \mu} \phi_{\mu}(\lambda)\,,$$ which is the $\mu$-average of the logarithmic potential energy at $\lambda$, denoted $$\phi_{\mu}(\lambda) := \E_{\omega \sim \mu} \log \frac{1}{|\lambda - \omega|}\,.$$

Standard arguments based on lower semi-continuity and strict convexity of the objective ensure that an optimal distribution exists and is unique. For certain sets $E$, this optimal distribution can be computed in closed form. 
For example it is well-known that for the interval $E = [m,M]$, the unique optimal distribution is the $\ArcmM$ distribution, see e.g.,~\citep[Example 1.11]{saff10}.

\paragraph*{Syntactic similarities and differences.} 
Let $E = [m,M]$. Then the stepsize extremal problem~\eqref{eq:extremal-app} clearly bears similarities to the electrostatics extremal problem~\eqref{eq:extremal-pot}, namely
\begin{align}
	\inf_{\mu \in \cM(E)} \sup_{\lambda \in E} \phi_{\mu}(0) - \phi_{\mu}(\lambda)
	\qquad \text{versus} \qquad
	\inf_{\mu \in \mathcal{M}(E)} \E_{\lambda\sim \mu} \phi_{\mu}(\lambda)\,.
\end{align}
Yet there are two differences. First, the stepsize problem evaluates the logarithmic potential $\phi_{\mu}(\lambda)$ at a specific point $\lambda \in [m,M]$, whereas the electrostatics problem averages this potential over $\lambda \sim \mu$. Second, the stepsize problem has an additional term of $\phi_{\mu}(0)$.

In general, changing the objective or constraints in an extremal problem can significantly alter the optimal solution. Yet, remarkably, the optimal solution $\mu$ is identical for both problems. This is because both problems have the same optimality condition, described next.

\subsection{Equalization property}\label{app:pot-equalization}

Here we describe how the equalization property arises as the optimality condition in both problems. 

\subsubsection{Equalized potential for the electrostatics problem}

A celebrated fact---sometimes called the Fundamental Theorem of Potential Theory~\citep[Chapter 3.3]{ransford}---is that the optimal distribution $\mu$ to the electrostatics problem~\eqref{eq:extremal-pot} satisfies an \emph{equalization property}: the potential $\lambda \mapsto \phi_{\mu}(\lambda)$ is constant on $E$. (Note that this is equivalent to the constancy on $E$ of the objective for the stepsize problem, namely $\lambda \mapsto \E_{\beta} \log |1 - \beta^{-1}\lambda| = \phi_{\mu}(0) - \phi_{\mu}(\lambda)$.) 
\par This equalization property has an intuitive physical interpretation: if the potential is not constant on $E$, then particles will flow to points of $E$ with lower potential, so the current state is not at equilibrium and therefore cannot be optimal. This is why the optimal solution $\mu$ is often called the equilibrium distribution. This result is attributed to Frostman; here we state a simple version of it that suffices for our setting, namely a compact interval $E = [m,M]$, although it extends to much more general sets $E \subset \C$~\citep[Chapter 3.3]{ransford}.

\begin{theorem}[Frostman's Theorem]\label{thm:pt:frostman}
	Let $E = [m,M]$. Then there is a unique optimal distribution $\mu^*$ for~\eqref{eq:extremal-pot}. Moreover it satisfies
	\begin{itemize}
		\item $\phi_{\mu^*}(\lambda) = I(\mu^*)$ for all $\lambda \in E$.
		\item $\phi_{\mu^*}(\lambda) \leq I(\mu^*)$ for all $\lambda \in \mathbb{C}$.
	\end{itemize}
\end{theorem}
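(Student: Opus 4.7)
The plan is to establish the four conclusions---existence, uniqueness, the equalization $\phi_{\mu^*}=I(\mu^*)$ on $E$, and the upper bound $\phi_{\mu^*}\leq I(\mu^*)$ on $\mathbb{C}$---in sequence, via standard potential-theoretic tools.

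For existence, $\cM(E)$ is weak-$*$ compact by Prokhorov (since $E$ is compact), and $I$ is weak-$*$ lower semicontinuous: writing the kernel as $\sup_n \min(\log\tfrac{1}{|\lambda-\omega|},n)$ exhibits $I$ as a supremum of weak-$*$ continuous functionals, and the hypothesis $\inf I>-\infty$ rules out the degenerate case. Uniqueness follows from strict convexity of $I$ on $\cM(E)$, which reduces to the positive-definiteness statement that $I(\sigma)\geq 0$ for any zero-total-mass, finite-energy signed measure $\sigma$, with equality iff $\sigma=0$. I would prove this classical fact via the Fourier representation of the log kernel (or alternatively via a regularization-plus-layer-cake argument that rewrites $I(\sigma)$ as an integral of $\sigma$-masses of disks squared), and combine it with the midpoint identity
\begin{align*}
I\bigl(\tfrac{\mu_1+\mu_2}{2}\bigr) = \tfrac{1}{2}(I(\mu_1)+I(\mu_2)) - \tfrac{1}{4}I(\mu_1-\mu_2),
\end{align*}
applied to $\sigma = \mu_1-\mu_2$, to force any two minimizers to coincide.

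For the equalization on $E$, I would take a first variation: for $\nu\in\cM(E)$ and $\mu_t := (1-t)\mu^* + t\nu$, the bilinear expansion
\begin{align*}
I(\mu_t) = I(\mu^*) + 2t\Bigl(\textstyle\int \phi_{\mu^*}\,d\nu - I(\mu^*)\Bigr) + t^2 I(\nu-\mu^*),
\end{align*}
combined with $I(\mu_t)\geq I(\mu^*)$, forces $\int\phi_{\mu^*}\,d\nu\geq I(\mu^*)$ for every $\nu\in\cM(E)$. Varying $\nu$ yields $\phi_{\mu^*}(\lambda)\geq I(\mu^*)$ quasi-everywhere on $E$, and integrating against $\mu^*$ with $\int\phi_{\mu^*}\,d\mu^*=I(\mu^*)$ promotes this to equality $\mu^*$-a.e. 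The simple connectedness hypothesis on $E$ (which rules out polar exceptional points on the boundary) then lifts this to pointwise equality on $E$. For the upper bound on $\mathbb{C}$, I would invoke the maximum principle: $\phi_{\mu^*}$ is superharmonic on $\mathbb{C}$ (as an average over $\mu^*$ of the superharmonic functions $z\mapsto-\log|z-\omega|$), harmonic off $\supp(\mu^*)\subseteq E$, and behaves like $-\log|z|\to-\infty$ as $|z|\to\infty$. Applying the minimum principle to $-\phi_{\mu^*}$ on each component of $\mathbb{C}\setminus E$ (the decay at infinity handles the unbounded component) bounds $\phi_{\mu^*}$ by its boundary values there, which by the equalization step equal $I(\mu^*)$, giving $\phi_{\mu^*}\leq I(\mu^*)$ throughout $\mathbb{C}$.

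The principal obstacle is the positive-definiteness of the log kernel underlying strict convexity: naïve expansion of $I(\sigma)$ for signed $\sigma$ involves cancellation of $\pm\infty$ diagonal contributions, and justifying the argument rigorously requires either Fourier-analytic techniques or a delicate regularization limit. A secondary subtlety is upgrading the conclusion from ``quasi-everywhere on $E$'' to ``everywhere on $E$,'' which is precisely the role of the simple connectedness hypothesis.
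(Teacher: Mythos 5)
The paper does not prove this theorem at all: immediately after the statement it writes ``A proof can be found in any textbook on logarithmic potential theory'' with citations, and then gives only an informal calculus-of-variations heuristic to motivate the equalization property. Your proposal is therefore a genuine addition --- a sketch of the classical textbook proof where the paper deliberately supplies none --- and its structure (existence by weak-$*$ compactness and lower semicontinuity of the energy, uniqueness by strict convexity from positive-definiteness of the log kernel, equalization by first variation, upper bound by the maximum principle on each component of the complement) is the standard one and is essentially correct; your midpoint identity and bilinear expansion both check out, and your first-variation step is the rigorous version of the paper's informal derivative computation. Two small imprecisions are worth noting. First, the first variation yields $\phi_{\mu^*}\geq I(\mu^*)$ only quasi-everywhere on $E$, as you acknowledge; but the upgrade to pointwise equality is not really about ``ruling out polar exceptional points'' in the abstract --- the correct mechanism is boundary regularity: for a compact connected set $E$ with more than one point whose complement is connected, $\partial(\C\setminus E)$ is a non-degenerate continuum through each of its points, so the continuum criterion makes every boundary point regular, and regularity (together with lower semicontinuity of $\phi_{\mu^*}$ and the domination bound $\phi_{\mu^*}\leq I(\mu^*)$) is what lifts the q.e.\ statement to everywhere on $E$. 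Second, for the upper bound one should be careful that $\phi_{\mu^*}$ is superharmonic, hence satisfies a \emph{minimum} principle rather than a maximum principle; the argument actually rests on harmonicity off $\supp(\mu^*)$ on each component of $\C\setminus E$, the established boundary value $I(\mu^*)$ on $\partial E$, and the $-\log|z|$ decay at infinity on the unbounded component, combined with (for the q.e.\ subtlety) the principle of domination that the paper itself invokes in the proof of Lemma~\ref{lem:arc-extremal}.
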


A proof can be found in any textbook on logarithmic potential theory, e.g.,~\citep{tsuji,ransford,saff2013logarithmic}. Below we use calculus of variations to give an informal back-of-the-envelope calculation that provides intuition for how this equalization property arises as the optimality condition. Let 
\begin{align*}
	F(\mu) := \iint \log \frac{1}{|x-y|} d\mu(x) d\mu(y) + \gamma \left( \int d\mu(x) - 1\right)
\end{align*}
denote the original objective plus a Lagrangian term for the normalization constraint on $\mu$. Then a quick calculation shows that the first variation is 
\begin{align*}
	\delta F(\mu,h) &= \frac{d}{d\eps} F(\mu+ \eps h) \big|_{\eps = 0} 
	=
	\int \left[ \gamma + 2 \int \log \frac{1}{|x-y|} d\mu(x) \right] dh(y)\,.
\end{align*}
In order for this first variation to vanish along any direction $h$, it must hold that 
\begin{align*}
	\gamma + 2 \int \log \frac{1}{|x-y|} d\mu(x) = 0, \qquad \forall y \in E\,.
\end{align*}
This is the equalization property.

\subsubsection{Equalized convergence rate for the stepsize problem}\label{app:pot-equalization-stepsize}

As discussed in \S\ref{ssec:hi:quad} and formally stated in Lemma~\ref{lem:arc-extremal}, the stepsize problem has the same equalization property at optimality. This is rigorously proven in \S\ref{app:arc-extremal} using the machinery of potential theory; to provide intuition for how this arises as the optimality condition, here we give an informal back-of-the-envelope argument based on the game-theoretic perspective developed in \S\ref{ssec:discussion:lb}.

Suppose $\mu$ is a solution to the stepsize problem~\eqref{eq:extremal-app}. By the calculation in \S\ref{ssec:discussion:lb}, there is a distribution $\nu$ (the Flipped Arcsine distribution) for which 
\begin{align}
    \E_{\beta \sim \mu,\, \lambda \sim \nu}  \log \abs{1 - \beta^{-1}\lambda} = \log \Racc\,.
\end{align}
Observe that this is the $\nu$-average of $P_{\mu}(\lambda) := \E_{\beta \sim \mu} \log |1 - \beta^{-1}\lambda| = \phi_{\mu}(0) - \phi_{\mu}(\lambda)$. Thus, if $\mu$ does not equalize, then (forgoing measure-theoretic technicalities) there exists $\lambda \in \supp(\nu) = [m,M]$ for which $P_{\mu}(\lambda)$
is strictly larger than the $\nu$-average. That is, $\max_{\lambda \in [m,M]} P_{\mu}(\lambda) > \log \Racc$. But this contradicts the optimality of $\mu$ since the value of the stepsize problem~\eqref{eq:extremal-app} is $\log \Racc$. Therefore the equalization property must be satisfied by the optimal distribution $\mu$.

\subsection{Solution to the extremal problem over stepsizes}\label{app:pot-proofs}

So far we have focused on developing conceptual connections between the stepsize problem and electrostatics problem. We now exploit these connections concretely by using the machinery of potential theory to solve the stepsize problem. This provides short proofs of Lemmas~\ref{lem:arc-extremal} and~\ref{lem:arc-equalization}.

\subsubsection{Proof of Lemma~\ref{lem:arc-equalization}}\label{app:arc-equalization}

Here we prove the equalization property.
In the optimization literature, this was proven by computing the integral directly using trigonometric identities~\citep{pronzato11}, or by decomposing the logarithmic integrand into the Chebyshev polynomial basis~\citep{kalousek}. Here we mention an alternative approach by appealing to the following well-known identity from potential theory, which gives the potential for the equilibrium distribution on $[-1,1]$ at any point $z$; see e.g.,~\citep[Example 1.11]{saff2013logarithmic}.

\begin{fact}\label{fact:eq}
	Let $\gamma$ denote the Arcsine distribution on $[-1,1]$. Then for any $z \in \R$,
	\begin{align*}
		\E_{t \sim \gamma} \log \frac{1}{\abs{z-t}} = \log 2 - \log \left( \abs{z} + \sqrt{z^2-1}\right) \cdot \mathds{1}_{z \notin [-1,1]}\,.
	\end{align*}
\end{fact}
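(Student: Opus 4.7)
The plan is to evaluate the integral directly via a Joukowski change of variables combined with the standard Jensen-type averaging formula for logarithms on the unit circle. First I would unfold the Arcsine density by substituting $t = \cos\theta$, turning the expectation into
\[
\E_{t \sim \gamma} \log \frac{1}{|z - t|}
= -\frac{1}{\pi} \int_0^{\pi} \log|z - \cos\theta|\, d\theta
= -\frac{1}{2\pi} \int_0^{2\pi} \log|z - \cos\theta|\, d\theta,
\]
where the second equality uses $\cos(2\pi - \theta) = \cos\theta$ to double the range of integration.

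Next I would Joukowski-parametrize $z$: choose $w \in \C$ with $|w| \geq 1$ satisfying $z = \tfrac{1}{2}(w + w^{-1})$. For $z \in [-1,1]$ this forces $|w| = 1$, and for $z \notin [-1,1]$ one has $|w| > 1$. A short algebraic expansion gives the factorization
\[
z - \cos\theta = \tfrac{1}{2} (w - e^{i\theta})(1 - w^{-1} e^{-i\theta}).
\]
Taking $\log|\cdot|$ and integrating term-by-term with the classical mean-value identity $\tfrac{1}{2\pi} \int_0^{2\pi} \log|a - b e^{i\theta}|\, d\theta = \log \max(|a|, |b|)$, the first factor contributes $\log \max(|w|, 1) = \log|w|$ (since $|w| \geq 1$), the second contributes $\log \max(1, |w^{-1}|) = 0$ (since $|w^{-1}| \leq 1$), and the factor of $\tfrac{1}{2}$ contributes $-\log 2$. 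Altogether,
\[
\E_{t \sim \gamma} \log \frac{1}{|z - t|} = \log 2 - \log|w|.
\]

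Finally I would convert $\log|w|$ back to the form stated in the Fact. Since $w$ is the root of $w^2 - 2 z w + 1 = 0$ with $|w| \geq 1$, and since the two roots $z \pm \sqrt{z^2 - 1}$ multiply to $1$, the modulus $|w|$ equals $|z - \sqrt{z^2 - 1}|$ under the branch of the square root used in the Fact's statement (chosen so that $z - \sqrt{z^2 - 1}$ is the exterior Joukowski root). For $z \in [-1,1]$ the root lies on the unit circle and $\log|w| = 0$, which is precisely why the indicator $\mathds{1}_{z \notin [-1,1]}$ appears in the formula.

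The only substantive obstacle is branch bookkeeping: one must verify that the branch of $\sqrt{z^2-1}$ chosen in the statement indeed picks out the exterior Joukowski root, and that the degeneracy of this map on $[-1,1]$ (where both roots collapse onto the unit circle) correctly produces the indicator. Beyond this bookkeeping, the computation is a short and standard application of the mean-value property of harmonic functions on circles.
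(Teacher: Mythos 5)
Your derivation is correct. Note, however, that the paper does not actually \emph{prove} Fact~\ref{fact:eq} --- it cites it as ``well-known'' and points to~\citep[Example 1.11]{saff2013logarithmic}, so there is no in-paper argument to compare against; your proof is a useful self-contained supplement. Your route (unfold the Arcsine density via $t = \cos\theta$, factor through the Joukowski parametrization $z = \tfrac12(w + w^{-1})$ with $|w|\geq 1$, apply the Jensen mean-value identity $\tfrac{1}{2\pi}\int_0^{2\pi}\log|a - be^{i\theta}|\,d\theta = \log\max(|a|,|b|)$, and collect the constant $\tfrac12$ as a $-\log 2$) is precisely the standard potential-theoretic derivation of the equilibrium potential of $[-1,1]$, and it is essentially the argument one finds in the cited reference. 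Every step checks out: the algebraic factorization $z-\cos\theta=\tfrac12(w-e^{i\theta})(1-w^{-1}e^{-i\theta})$ is an identity, the two mean-value integrals contribute $\log|w|$ and $0$ respectively, and the Arcsine pushforward under $\cos$ is uniform on $[0,\pi]$, giving the $\tfrac{1}{2\pi}\int_0^{2\pi}$ form.

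One point worth making explicit, which you correctly flag as ``branch bookkeeping'': as literally written, the formula $\log|z-\sqrt{z^2-1}|$ is not branch-invariant --- for real $z>1$ with the positive square root, $z-\sqrt{z^2-1}$ is the \emph{interior} Joukowski root (modulus $<1$), which would give the wrong sign. The Fact must be read with the branch chosen so that $|z-\sqrt{z^2-1}|\geq 1$ (equivalently, one could write $\log\max\bigl(|z-\sqrt{z^2-1}|,\,|z+\sqrt{z^2-1}|\bigr)$, which is manifestly branch-free). In the one place the paper actually uses this formula (the proof of Lemma~\ref{lem:arc-equalization}, where $z=-\tfrac{\kappa+1}{\kappa-1}<-1$ and the positive real square root is taken), this convention does happen to coincide with the principal branch, so the computation there goes through. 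Spelling out the branch-free form would make your proof airtight without changing its substance.
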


\begin{proof}[Proof of Lemma~\ref{lem:arc-equalization}]
	Let $\mu$ denote the Arcsine distribution on $[m,M]$. Note that $\mu$ is the push-forward of $\gamma$ under the linear map $L(z) := \tfrac{M+m}{2} + \tfrac{M-m}{2}z$ that sends $[-1,1]$ to $[m,M]$. Thus by a change-of-measure, the $\mu$-potential at any $\omega \in \C$ is
	\begin{align*}
		\phi_{\mu}(\omega) = \E_{\beta \sim \mu} \log \frac{1}{\abs{\beta - \omega}} = \E_{t \sim \gamma} \log \frac{1}{\abs{L(t) - L(z)}}
		=
		\phi_{\gamma}(z) + \log \frac{2}{M-m}\,,
	\end{align*}
	where $z := L^{-1}(\omega)$. 
	This gives the desired identity:
	\begin{align*}
		\E_{\beta \sim \mu} \log \abs{1 - \beta^{-1} \lambda }
		= \E_{\beta \sim \mu} \log \frac{1}{|\beta|} - \E_{\beta \sim \mu} \log \frac{1}{\abs{\lambda - \beta}}
		= \phi_{\mu}(0) - \phi_{\mu}(\lambda) 
		=
		- \log \left( \abs{z} + \sqrt{z^2 - 1}\right)
		= \log \frac{\sqrt{\kappa} - 1}{\sqrt{\kappa} + 1}\,.
	\end{align*}
	Above the first step is by expanding the logarithm, the second step is by definition of the logarithmic potential $\phi_{\mu}$, the third step is by Fact~\ref{fact:eq} and the above formula for $\phi_{\mu}$ in terms of $\phi_{\gamma}$, and the last step is by plugging in the value of $z = L^{-1}(0) = -\tfrac{\kap + 1}{\kap - 1}$ and simplifying.
\end{proof}

\begin{remark}[Connection to Green's function]
	This expression $\log \frac{\sqrt{\kappa} - 1}{\sqrt{\kappa} + 1} = \log \Racc$ is the evaluation at $0$ of Green's function for the interval $[m,M]$. 	This connection essentially arises because: 1) for quadratic optimization, polynomial approximation dictates the convergence rate of GD (see \S\ref{ssec:hi:quad}), and 2) the Bernstein-Walsh Theorem characterizes the optimal rate of convergence of polynomial approximations in terms of Green's function (see e.g.,~\citep[Theorem 1]{6steps}).
    % ~\citep[Theorem 6.3.1]{ransford} 
\end{remark}

\subsubsection{Proof of Lemma~\ref{lem:arc-extremal}}\label{app:arc-extremal}

Here we prove that the Arcsine distribution is the unique optimal solution for the stepsize problem~\eqref{eq:extremal-app} by mirroring the standard argument from potential theory that the Arcsine distribution is the unique optimal solution to the electrostatic problem~\eqref{eq:extremal-pot}, see e.g.,~\citep[Example 2.9]{saff10}. Specifically, our approach is to show that any optimal distribution $\mu$ satisfies the equalization property (see \S\ref{app:pot-equalization-stepsize} for a heuristic argument of this). Indeed, it then follows that the optimal distribution $\mu$ is unique and is the Arcsine distribution by the converse of Frostman's Theorem (stated below), which ensures that the Arcsine distribution is the unique distribution satisfying equalization.

We begin with a simple observation.

\begin{obs}[Support of the optimal stepsize distribution]\label{obs:constrained}
    The constraint $\mu \in \cM(E)$ does not change the value of the stepsize problem~\eqref{eq:extremal-app}.
\end{obs}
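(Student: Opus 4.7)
\medskip

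\noindent\textbf{Proof proposal.} The plan is to show that any probability measure $\mu$ on $(0,\infty)$ can be ``projected'' onto $E = [m,M]$ without increasing the objective $\sup_{\lambda \in E}\,\E_{\beta \sim \mu} \log|1 - \beta^{-1}\lambda|$. Concretely, I would introduce the clipping map $T:(0,\infty) \to [m,M]$ defined by $T(\beta) := \min(M,\max(m,\beta))$ and consider the pushforward $\mu' := T_{\#}\mu \in \cM(E)$. If the pointwise inequality
\begin{align}
    \E_{\beta \sim \mu'} \log |1 - \beta^{-1}\lambda|
    \;\leq\;
    \E_{\beta \sim \mu} \log |1 - \beta^{-1}\lambda|
    \qquad \text{for every } \lambda \in E
    \label{eq:proposal-pointwise}
\end{align}
holds, then taking the supremum over $\lambda \in E$ on both sides shows that any $\mu$ can be replaced by a distribution $\mu' \in \cM(E)$ without worsening the objective. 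Combined with the trivial direction (infimum over a larger class is no larger than infimum over a subclass), this gives equality of the two optimal values.

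The inequality \eqref{eq:proposal-pointwise} reduces, by the change-of-variables formula for pushforwards, to the pointwise statement $f_\lambda(\beta) \geq f_\lambda(T(\beta))$ for all $\beta > 0$, where $f_\lambda(\beta) := \log |1 - \beta^{-1}\lambda|$. I would verify this by splitting into three regions: (i) $\beta \in [m,M]$, in which case $T(\beta) = \beta$ and there is nothing to prove; (ii) $\beta > M \geq \lambda$, where $1 - \beta^{-1}\lambda > 0$ and hence $f_\lambda(\beta) = \log(1 - \beta^{-1}\lambda)$ is strictly increasing in $\beta$, giving $f_\lambda(\beta) \geq f_\lambda(M)$; and (iii) $0 < \beta < m \leq \lambda$, where $\beta^{-1}\lambda - 1 > 0$ and $f_\lambda(\beta) = \log(\beta^{-1}\lambda - 1)$ is strictly decreasing in $\beta$, giving $f_\lambda(\beta) \geq f_\lambda(m)$. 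So in every region $f_\lambda(\beta) \geq f_\lambda(T(\beta))$, which is exactly what is needed.

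There is no serious obstacle, but two small details are worth noting. First, one should restrict attention to $\mu$ supported on $(0,\infty)$ since nonpositive inverse-stepsizes do not correspond to valid gradient-descent steps; any mass outside $(0,\infty)$ can be discarded or clipped to an endpoint, and only lowers the right-hand side of \eqref{eq:proposal-pointwise}, so the argument is unaffected. Second, the integrand $f_\lambda$ has a logarithmic singularity at $\beta = \lambda$; this is harmless because the inequality $f_\lambda(\beta) \geq f_\lambda(T(\beta))$ is interpreted in $[-\infty,\infty)$ and the conclusion is a comparison of integrals rather than of specific values. Thus the main work is just the three-case monotonicity check, after which the observation follows immediately.
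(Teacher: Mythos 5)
Your approach is essentially the one the paper takes: project an arbitrary admissible measure onto the interval and verify a pointwise inequality that guarantees the supremum over $\lambda \in [m,M]$ cannot increase. Your three-case monotonicity check of $f_\lambda(\beta) = \log|1-\beta^{-1}\lambda|$ on $(0,\infty)$ is correct and is the heart of the matter. The paper phrases the same projection in the stepsize variable $\alpha = \beta^{-1}$ rather than in $\beta$ (on $(0,\infty)$ these are equivalent, since $\beta\mapsto\beta^{-1}$ is monotone), and in addition allows $\alpha = a+bi \in \C$: it drops the imaginary part, which only decreases $|1-\lambda\alpha|$ since $|1-\lambda\alpha|^2 = (1-\lambda a)^2 + \lambda^2 b^2 \geq (1-\lambda a)^2$, and then clips the real part to $[M^{-1},m^{-1}]$.

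One caution about your closing remark. You say mass outside $(0,\infty)$ ``can be discarded or clipped to an endpoint.'' As written, your clipping map $T(\beta)=\min(M,\max(m,\beta))$ sends $\beta<0$ to $m$, and the pointwise inequality then \emph{fails}: with $\lambda = M$ one needs $\kappa - 1 = |1-M/m| \leq |1-M/\beta| = 1 + M/|\beta|$, which is false once $|\beta| > M/(\kappa-2)$ and $\kappa>2$. The correct projection in the $\alpha$-variable sends $\alpha<0$ to the left endpoint $M^{-1}$, i.e.\ $\beta<0$ should project to $M$, not $m$; then $|1 - \lambda/M| = 1-\lambda/M \leq 1 < 1 + \lambda/|\beta|$ holds for all $\lambda\in[m,M]$. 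Since you explicitly restrict to $\mu$ supported on $(0,\infty)$ --- which is the natural class for GD stepsizes --- this side remark does not affect your main argument, but it should be corrected or simply omitted.
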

\begin{proof}
    It suffices to argue that for any distribution $\mu$ (with arbitrary support), there exists some distribution $\tilde{\mu} \in \cM(E)$ whose objective value is at least as good, i.e., 
    $\max_{\lambda \in [m,M]} \E_{\tilde{\beta} \sim \tilde{\mu}} \log |1 - \tilde{\beta}^{-1} \lambda| 
        \leq
        \max_{\lambda \in [m,M]} \E_{\beta \sim \mu} \log |1 - \beta^{-1} \lambda|$.
    To this end, it suffices to observe the elementary inequality
    \[
        |1 - \lambda \tilde{\alpha}| \leq |1 - \lambda \alpha|\,, \qquad \forall \lambda \in [m,M]\,,
    \]
   where $\alpha := \beta^{-1} = a + bi \in \C$ is arbitrary and $\tilde{\alpha} := \max(\min(a,M^{-1}),m^{-1})$ is the projection of $\alpha$ onto the interval $[M^{-1},m^{-1}]$. Indeed, this pointwise inequality implies that we can project $\mu$ onto $\cM(E)$ in this way without worsening the objective value of $\mu$. 
\end{proof}

We now show that any optimal distribution, constrained to this interval, must satisfy the equalization property. For this it is helpful to recall several classical facts from potential theory. These can be found, e.g., in page 171 and Theorems 3.2, 2.8, 2.2, and 3.1, respectively, of~\citep{saff10}.

\begin{lemma}[Basic properties of logarithmic potentials]\label{lem:quad-infinite:pt:potential-harmonic}
    If $\mu \in \cM(\C)$, then $\phi_{\mu}$ is lower semicontinuous on $\C$ and harmonic outside the support of $\mu$. 
\end{lemma}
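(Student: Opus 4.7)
The plan is to prove the two claims separately, each by interchanging the $\mu$-integral with a limiting operation applied to the Newtonian kernel $k(\lambda,\omega) := \log\tfrac{1}{|\lambda-\omega|}$. For each fixed $\omega$, $\lambda \mapsto k(\lambda,\omega)$ is continuous on $\C\setminus\{\omega\}$, blows up to $+\infty$ at $\omega$, and is harmonic in $\lambda$ on $\C\setminus\{\omega\}$ (since $\log|z|$ is the real part of a holomorphic branch of $\log z$ on any punctured neighborhood of the origin). Both properties of $\phi_\mu$ then follow from elementary measure-theoretic arguments.

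For lower semicontinuity, I would truncate the singularity from above. Define $k_n(\lambda,\omega) := \min(k(\lambda,\omega), n)$; since $k$ blows up to $+\infty$ on the diagonal, $k_n$ saturates to $n$ in a neighborhood of the diagonal and is therefore jointly continuous on $\C\times\C$. The truncated potential $\phi_\mu^{(n)}(\lambda) := \int k_n(\lambda,\omega)\, d\mu(\omega)$ is then continuous in $\lambda$ by dominated convergence, using the upper bound $k_n \leq n$ and a locally uniform lower bound (which is immediate once $\supp(\mu)$ is compact, as in all applications in this paper). Monotone convergence then gives $\phi_\mu^{(n)} \uparrow \phi_\mu$ pointwise, so $\phi_\mu$ is a pointwise supremum of continuous functions and therefore lower semicontinuous on $\C$.

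For harmonicity outside $\supp(\mu)$, fix $\lambda_0 \notin \supp(\mu)$ and choose $r > 0$ with $\overline{B(\lambda_0, 2r)} \cap \supp(\mu) = \emptyset$. On $B(\lambda_0, r)$, the kernel $\lambda \mapsto k(\lambda,\omega)$ is smooth in $\lambda$ and its first two partial derivatives are bounded uniformly in $\omega \in \supp(\mu)$ (by constants depending only on $r$, since $|\lambda - \omega| \geq r$ on this set). This uniform domination justifies differentiation under the integral sign and yields
\[
\Delta \phi_\mu(\lambda) = \int \Delta_\lambda k(\lambda,\omega)\, d\mu(\omega) = 0
\]
on $B(\lambda_0, r)$, since each integrand vanishes by harmonicity of $k(\cdot,\omega)$ at $\lambda \neq \omega$. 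As $\lambda_0$ was arbitrary, $\phi_\mu$ is harmonic on $\C \setminus \supp(\mu)$.

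The only technical step in either part is the limit-integral exchange, and in each case the required $\mu$-uniform bounds follow immediately once $\supp(\mu)$ is known to be compact; the paper's applications take $\supp(\mu) \subset [m,M]$, so these domination hypotheses are trivially satisfied and the proof presents no real obstacle.
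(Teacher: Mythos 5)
The paper does not prove this lemma at all---it is stated as one of several ``classical facts from potential theory'' and cited to Theorem~3.2 of Saff's survey~\citep{saff10}. So you are filling in a proof where the paper provides only a citation, and the argument you give is indeed the standard textbook one (truncate the kernel from above and take an increasing limit of continuous functions for lower semicontinuity; differentiate under the integral for harmonicity). Both parts are correct.

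Two small points worth noting. First, as you yourself flag, the lemma is stated for $\mu \in \cM(\C)$ but your dominated/monotone-convergence steps and the differentiation-under-the-integral argument all use compactness of $\supp\mu$ (to get a locally uniform lower bound on the kernel, and to ensure $\phi_\mu$ is finite near $\lambda_0$). This is not a defect in your argument so much as a necessary hypothesis: without compact support (or at least a logarithmic integrability condition at $\infty$), $\phi_\mu$ need not even be well defined, and the cited source states the result under compact support. Since the paper only ever applies the lemma to measures on $[m,M]$, the hypothesis is harmless, but strictly speaking the lemma should say $\mu$ compactly supported. Second, ``Newtonian kernel'' is a misnomer in the plane; the standard name for $\log\tfrac{1}{|\lambda-\omega|}$ is the logarithmic kernel. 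That is cosmetic. Otherwise the proof is clean and complete.
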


\begin{lemma}[Continuity of the equilibrium potential]\label{lem:quad-infinite:pt:continuity-principle}
	Consider the setting of Theorem~\ref{thm:pt:frostman}. Then the potential $\phi_{\mu^*}$ is continuous on $\C$. 
\end{lemma}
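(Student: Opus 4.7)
The plan is to split $\C$ into three regions — the open set $\C \setminus E$, the relative interior of $E$, and the boundary $\partial E$ — and verify continuity of $\phi_{\mu^*}$ separately in each.

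On $\C \setminus E$: since $\supp(\mu^*) \subseteq E$ and $E$ is closed, any point $\lambda \in \C \setminus E$ has an open neighborhood disjoint from $\supp(\mu^*)$. Lemma~\ref{lem:quad-infinite:pt:potential-harmonic} therefore gives that $\phi_{\mu^*}$ is harmonic on that neighborhood, hence continuous at $\lambda$. On the relative interior of $E$: Theorem~\ref{thm:pt:frostman} asserts $\phi_{\mu^*}(\lambda) = I(\mu^*)$ for every $\lambda \in E$, so $\phi_{\mu^*}$ is locally constant, hence continuous, on any open subset of $E$.

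The remaining and only delicate case is continuity at boundary points $\lambda_0 \in \partial E$. Here the two bullets of Frostman's theorem combine cleanly with the lower semicontinuity guaranteed by Lemma~\ref{lem:quad-infinite:pt:potential-harmonic}. The first bullet of Theorem~\ref{thm:pt:frostman} gives $\phi_{\mu^*}(\lambda_0) = I(\mu^*)$; lower semicontinuity then yields $\liminf_{\lambda \to \lambda_0} \phi_{\mu^*}(\lambda) \geq \phi_{\mu^*}(\lambda_0) = I(\mu^*)$; and the global upper bound $\phi_{\mu^*}(\lambda) \leq I(\mu^*)$ for all $\lambda \in \C$ (the second bullet of Frostman) yields $\limsup_{\lambda \to \lambda_0} \phi_{\mu^*}(\lambda) \leq I(\mu^*)$. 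Squeezing yields $\lim_{\lambda \to \lambda_0} \phi_{\mu^*}(\lambda) = \phi_{\mu^*}(\lambda_0)$, i.e., continuity at $\lambda_0$.

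The boundary step is the main obstacle, but it dissolves once one recognizes that Frostman's two-sided pinning of $\phi_{\mu^*}$ — pointwise equality to $I(\mu^*)$ on $E$ combined with a global upper bound by the same constant — dovetails with the automatic lower semicontinuity of logarithmic potentials to force continuity at precisely the place where it could otherwise fail. No additional regularity assumption on $E$ is needed beyond what Theorem~\ref{thm:pt:frostman} already grants, and no further machinery from potential theory has to be invoked.
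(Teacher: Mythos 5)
The paper does not prove this lemma; it simply cites it (along with the other potential-theoretic facts) from~\citep{saff10}. Your proof, therefore, provides a self-contained argument from the other stated lemmas, which is worth comparing against the textbook route. The key observation — that the two bullets of Theorem~\ref{thm:pt:frostman} pin $\phi_{\mu^*}$ from above by $I(\mu^*)$ globally while lower semicontinuity (Lemma~\ref{lem:quad-infinite:pt:potential-harmonic}) pins the $\liminf$ from below by $\phi_{\mu^*}(\lambda_0) = I(\mu^*)$ at any $\lambda_0 \in E$ — is correct, and it is actually more elementary than invoking the general continuity principle of potential theory, since the latter is only needed when one does not already know the equilibrium potential attains its global supremum on $E$.

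One thing to tighten: your middle case (``relative interior of $E$'') is either vacuous or mislabeled. For the relevant set $E = [m,M]$, the $\C$-interior of $E$ is empty, so there is no open subset of $\C$ on which ``locally constant'' applies; and constancy of the restriction $\phi_{\mu^*}|_E$ on the relative interior $(m,M)$ does not by itself yield continuity of $\phi_{\mu^*}$ as a function on $\C$, since one must control approaches from off the segment. But this is harmless because your squeeze argument in the third case uses nothing special about boundary points — it applies verbatim at every $\lambda_0 \in E$. So the cleanest version of your proof has exactly two cases: off $E$, harmonicity gives continuity; on $E$, the LSC/squeeze argument gives continuity. It is also worth being explicit that this squeeze relies on the strong form of Frostman's theorem stated in the paper (equality on all of $E$, not merely quasi-everywhere), which holds here because a compact interval is regular for potential theory, as the paper flags when it qualifies Theorem~\ref{thm:pt:frostman} as ``a simple version\dots\ that suffices for our setting.''
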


\begin{lemma}[Principle of domination]\label{lem:quad-infinite:pt:principle-domination}
    Suppose $\mu, \nu \in \mathcal{M}(\C)$ are compactly supported and that $I(\mu) <  \infty$. If for some constant $c$, $\phi_{\mu}(z) - \phi_{\nu}(z) \leq c$ holds $\mu$-a.e., then it holds for all $z \in \mathbb{C}$. 
\end{lemma}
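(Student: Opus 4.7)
My plan is to set $v(z) := \phi_\nu(z) + c - \phi_\mu(z)$, which is lower semicontinuous on $\C$ by Lemma~\ref{lem:quad-infinite:pt:potential-harmonic}, and to prove $v \geq 0$ on all of $\C$ given that this holds $\mu$-a.e. The strategy is the minimum principle for superharmonic functions, applied on the open set $\Omega := \C \setminus \supp(\mu)$. On $\Omega$, $\phi_\mu$ is harmonic (Lemma~\ref{lem:quad-infinite:pt:potential-harmonic}) and $\phi_\nu$ is superharmonic on all of $\C$, so $v$ is superharmonic on $\Omega$.

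First I would analyze the behavior of $v$ on $\partial \Omega = \supp(\mu) \cup \{\infty\}$. At infinity, since $\mu$ and $\nu$ are compactly supported probability measures, both potentials behave like $\log(1/|z|) + o(1)$, so $v(z) \to c$; the conclusion forces $c \geq 0$ (else the inequality fails at infinity), so this boundary piece is harmless. On $\supp(\mu)$, the hypothesis gives $v \geq 0$ on a set of full $\mu$-measure, and combined with lower semicontinuity this will eventually yield $\liminf_{z \to \zeta,\, z \in \Omega} v(z) \geq 0$ at every $\zeta \in \supp(\mu)$ outside a polar exceptional set, which is exactly the boundary condition demanded by the generalized minimum principle.

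The main obstacle is bridging the gap between the ``$\mu$-almost everywhere'' hypothesis and the ``quasi-everywhere'' (i.e., polar exceptional set) boundary condition required by the minimum principle. Without this upgrade the lemma can fail: a measure $\mu$ concentrated on a polar set, such as a point mass, could satisfy the hypothesis vacuously while violating the conclusion. The assumption $I(\mu) < \infty$ is precisely the leverage that makes the bridge go through, via the classical fact that measures of finite logarithmic energy do not charge polar sets. Concretely, one would argue that the $\mu$-null exceptional set in the hypothesis can be enlarged to a polar set while preserving $v \geq 0$ quasi-everywhere on $\supp(\mu)$, using Choquet-style capacity/energy estimates; this is the technical heart of the argument.

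With the polar boundary condition in hand, applying the minimum principle to the superharmonic function $v$ on $\Omega$ concludes $v \geq 0$ on $\Omega$. For points in $\supp(\mu)$, the inequality $v \geq 0$ then follows from lower semicontinuity of $v$ together with the fact that every point of $\supp(\mu)$ is a limit of points in $\Omega$ (except possibly on a polar set, which we have already controlled). Combining these two pieces yields $v \geq 0$ on all of $\C$, as required.
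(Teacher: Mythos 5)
The paper does not prove this lemma; it is cited as a classical result from~\citep{saff10} in Appendix~\ref{app:pot-proofs}, so there is no internal proof to compare your sketch against. Your high-level framework --- set $v := \phi_\nu + c - \phi_\mu$, observe that $v$ is superharmonic on $\C\setminus\supp(\mu)$ and lower semicontinuous on $\C$, and apply a generalized minimum principle with a quasi-everywhere boundary condition --- is the right scaffolding for this kind of statement.

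However, the step you yourself flag as ``the technical heart'' is not actually carried out, and the mechanism you propose for it is incorrect. You argue that since measures of finite logarithmic energy do not charge polar sets, ``the $\mu$-null exceptional set in the hypothesis can be enlarged to a polar set.'' That implication runs in the wrong direction: $I(\mu)<\infty$ guarantees that every \emph{polar} set is $\mu$-null, not that every $\mu$-null set is polar or contained in one. A $\mu$-null Borel set can have positive capacity (e.g.\ any compact set disjoint from $\supp(\mu)$, or even subsets of $\supp(\mu)$ that $\mu$ happens not to charge), so no capacity/energy estimate will shrink an arbitrary $\mu$-null set into a polar one, and the quasi-everywhere boundary condition you want to feed into the minimum principle is not justified. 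The classical proofs (Saff--Totik, Landkof, Ransford) avoid this head-on upgrade entirely, instead exploiting finite energy through the quasi-continuity / fine continuity of $\phi_\mu$, or through an approximation of $\mu$ from below by measures with continuous potentials; the hypothesis $I(\mu)<\infty$ enters in a genuinely different way than you describe. Separately, your treatment of the point at infinity is circular: you observe $v(z)\to c$ and assert ``the conclusion forces $c\geq 0$,'' but you cannot invoke the conclusion you are trying to prove --- you would need a separate argument (from the hypothesis together with $I(\mu)<\infty$, say via Fubini and the positivity of mutual energies) that $c\geq 0$, or else structure the minimum principle so as not to need a boundary condition at infinity.
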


\begin{lemma}[Maximum principle]\label{lem:quad-infinite:pt:max-principle}
	Suppose $h$ is a harmonic function on a domain $\Omega \subset \C$. If $h$ attains a local maximum in $\Omega$, then $h$ is constant on $\Omega$.
\end{lemma}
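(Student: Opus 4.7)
The plan is to invoke the mean value characterization of harmonic functions to show $h$ is locally constant near the assumed local maximum, and then to propagate this constancy throughout the connected domain $\Omega$ via real analyticity.

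First I would recall the mean value property: $h$ harmonic on $\Omega$ is equivalent to
\begin{align*}
	h(z_0) = \frac{1}{2\pi} \int_0^{2\pi} h(z_0 + re^{i\theta}) \, d\theta
\end{align*}
holding for every closed disk $\overline{D(z_0, r)} \subset \Omega$. Suppose $z_* \in \Omega$ is a local maximum of $h$ with value $M_0 := h(z_*)$, so $h \leq M_0$ on some open disk $D(z_*, r_0) \subset \Omega$. For any $r \in (0, r_0)$, the mean value identity combined with the pointwise bound $h(z_* + re^{i\theta}) \leq M_0$ forces
\begin{align*}
	M_0 = h(z_*) = \frac{1}{2\pi} \int_0^{2\pi} h(z_* + re^{i\theta}) \, d\theta \leq M_0,
\end{align*}
so equality must hold throughout. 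Since the integrand is continuous in $\theta$ and nowhere exceeds $M_0$, it must equal $M_0$ for every $\theta$. Letting $r$ vary in $(0, r_0)$ yields $h \equiv M_0$ on the entire disk $D(z_*, r_0)$.

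To extend this to all of $\Omega$, I would invoke the classical fact that a harmonic function on an open subset of $\C$ is real-analytic (one standard proof uses the Poisson integral formula to represent $h$ inside any subdisk as a convergent power series in the two real coordinates). The identity theorem for real-analytic functions on a connected open set then applies: since the real-analytic function $h - M_0$ vanishes on the nonempty open set $D(z_*, r_0) \subset \Omega$, it must vanish on all of the connected set $\Omega$. Hence $h \equiv M_0$ on $\Omega$.

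There is no substantive obstacle here; the whole argument is classical and rests on two imported tools, namely the mean value characterization of harmonicity and the real analyticity of harmonic functions, both of which I would simply cite from a standard reference on harmonic function theory. An alternative route avoiding analyticity is to run an open-and-closed connectedness argument directly on the maximal set where $h$ is locally constant at value $M_0$, using the same mean value averaging at each new candidate point; the only delicate aspect in that variant is re-establishing the local upper bound $h \leq M_0$ at points on the boundary of the set of local constancy, which is why the analyticity route is cleaner.
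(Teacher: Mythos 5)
The paper does not prove this lemma at all; it is stated as one of several classical facts from potential theory and cited directly from a textbook (Theorems in \citet{saff10}), since the maximum principle for harmonic functions is a standard result. Your proof is a correct, self-contained derivation of that classical fact: the mean value property forces local constancy at a local maximum (the averaging argument is airtight, since a continuous function bounded above by its own average must equal that average), and real analyticity of harmonic functions together with the identity theorem propagates constancy to the whole connected domain. One small remark: a ``domain'' is by convention open and connected, so the connectedness hypothesis you invoke for the identity theorem is already built into the statement and need not be assumed separately. Your alternative suggestion (an open-and-closed argument on the set $\{z \in \Omega : h(z) = M_0\}$) also works and is slightly more elementary since it avoids the machinery of real analyticity: that set is closed in $\Omega$ by continuity, open by the mean value argument you already ran, nonempty, hence all of $\Omega$. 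Either route is fine; there is no gap.
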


\begin{lemma}[Converse to Frostman's Theorem]\label{lem:pt:frostman-converse} Suppose $\mu \in \calM(E)$ satisfies $I(\mu) <\infty$. If $\phi_{\mu}(z) = c$ on $E$ for some constant $c$, then $\mu$ is the equilibrium distribution for $E$.  
\end{lemma}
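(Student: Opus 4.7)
The plan is to show $I(\mu) = I(\mu^*)$, where $\mu^*$ is the equilibrium distribution for $E$ produced by Theorem~\ref{thm:pt:frostman}, and then invoke the uniqueness of this minimizer to conclude $\mu = \mu^*$. The uniqueness statement in Theorem~\ref{thm:pt:frostman} ultimately comes from strict convexity of the energy $I$ on measures with finite energy, so any $\mu \in \calM(E)$ attaining the minimum value of $I$ must coincide with $\mu^*$. To apply Theorem~\ref{thm:pt:frostman}, note that the optimal value is not $-\infty$ because the kernel $\log(1/\abs{z-w})$ is bounded below on the compact set $E\times E$, so $I(\nu) > -\infty$ for every $\nu \in \calM(E)$; moreover $I(\mu^*) \leq I(\mu) < \infty$ by assumption and optimality.

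The execution is three short calculations. First, since $\phi_\mu \equiv c$ on $E$ and $\supp(\mu) \subseteq E$,
\begin{align*}
    I(\mu) \;=\; \int \phi_\mu(z)\, d\mu(z) \;=\; c.
\end{align*}
Second, the equalization property $\phi_{\mu^*} \equiv I(\mu^*)$ on $E$ (from Theorem~\ref{thm:pt:frostman}) together with $\supp(\mu) \subseteq E$ gives
\begin{align*}
    \int \phi_{\mu^*}(z)\, d\mu(z) \;=\; I(\mu^*).
\end{align*}
Third, applying Fubini to swap the order of integration in the double integral and then using $\phi_\mu \equiv c$ on $E \supseteq \supp(\mu^*)$,
\begin{align*}
    \int \phi_{\mu^*}(z)\, d\mu(z) \;=\; \iint \log\frac{1}{\abs{z-w}}\, d\mu^*(w)\, d\mu(z) \;=\; \int \phi_\mu(w)\, d\mu^*(w) \;=\; c.
\end{align*}
Combining the three identities yields $I(\mu) = c = I(\mu^*)$, so $\mu$ is also a minimizer of $I$ over $\calM(E)$, and uniqueness forces $\mu = \mu^*$.

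The only delicate step is justifying Fubini, since the log kernel is unbounded on the diagonal. The standard resolution is to split $\log(1/\abs{z-w}) = \log^+(1/\abs{z-w}) - \log^+\abs{z-w}$: the second term is uniformly bounded on the compact product $E \times E$, so its double integral is a harmless constant. The first term is nonnegative, so Tonelli applies unconditionally and the iterated integrals agree in $[0,+\infty]$; finiteness of the resulting mixed energy follows from the standard potential-theoretic inequality bounding $\abs{\iint \log\tfrac{1}{\abs{z-w}} d\mu(z) d\mu^*(w)}$ in terms of $I(\mu)$ and $I(\mu^*)$ (both finite here), at which point the full Fubini statement is valid. I expect this Fubini justification to be the only technical subtlety; the rest of the argument is purely formal manipulation once $\mu^*$ has been introduced.
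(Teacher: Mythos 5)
Your argument is correct, and it is worth noting that the paper does not actually supply a proof of this lemma at all: Lemmas~\ref{lem:quad-infinite:pt:potential-harmonic} through \ref{lem:pt:frostman-converse} are cited to \citep{saff10} without proof. So there is no in-paper argument to compare against; your energy-comparison proof stands on its own as a self-contained justification.

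The structure is sound. You reduce everything to showing $I(\mu)=I(\mu^*)$, with (1) $I(\mu)=\int\phi_\mu\,d\mu=c$ from the hypothesis, (2) $\int\phi_{\mu^*}\,d\mu=I(\mu^*)$ from Frostman's equalization on $E$, and (3) $\int\phi_{\mu^*}\,d\mu=\int\phi_{\mu}\,d\mu^*=c$ by symmetry of the kernel; then uniqueness of the energy minimizer (part of Theorem~\ref{thm:pt:frostman}) forces $\mu=\mu^*$. The Fubini justification can be streamlined: on the compact set $E\times E$ the kernel $\log\tfrac{1}{|z-w|}$ is bounded below by $-\log(\operatorname{diam} E)$, so after adding a constant it is nonnegative and Tonelli applies directly; there is no need for the $\log^+$ splitting or the Cauchy--Schwarz-type mixed-energy bound you invoke (the value is automatically finite once you identify it with $I(\mu^*)\leq I(\mu)<\infty$ via step (2)). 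Also note that the paper's version of Frostman's theorem asserts equalization everywhere on $E$, not merely quasi-everywhere, so you avoid the usual capacity-zero caveat in step (2); this relies on the compact, simply connected hypothesis in the paper's statement, which holds for $E=[m,M]$.
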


\begin{proof}[Proof of Lemma~\ref{lem:arc-extremal}]
	For shorthand, let $\mu$ denote the $\ArcmM$ distribution and note that $I(\mu) < \infty$ by Lemma~\ref{lem:arc-equalization}. Suppose $\nu \in \cM([m,M])$ has objective value for the stepsize problem~\eqref{eq:extremal} that is at least as good as $\mu$. Then by the equalization property for $\mu$ (Lemma~\ref{lem:arc-equalization}),
    \[
        \phi_{\nu}(0) - \phi_{\nu}(\lambda) = 
        \E_{\beta \sim \nu} \log \abs{1 - \beta^{-1} \lambda} \leq
        \E_{\beta \sim \mu} \log \abs{1 - \beta^{-1} \lambda} 
        =
        \phi_{\mu}(0) - \phi_{\mu}(\lambda)\,, \qquad \forall \lambda \in [m,M]\,.
    \]
    Re-arranging gives
	\begin{align}
		f(\lambda) \leq c\,,
		  \qquad \forall \lambda \in [m, M],
		\label{eq:prop:arcsine:opt-implies-equalize:1}
	\end{align}
        where $f := \phi_{\mu} - \phi_{\nu}$ and $c :=f(0)$. 
        By the principle of domination (Lemma~\ref{lem:quad-infinite:pt:principle-domination}), this inequality~\eqref{eq:prop:arcsine:opt-implies-equalize:1} holds for all $\lambda \in \C$. Since $0 \in \Omega := \C \setminus [m,M]$, we conclude that $f$ achieves its maximum on $\Omega$ at $0$. But because $f$ is harmonic on $\Omega$ (Lemma~\ref{lem:quad-infinite:pt:potential-harmonic}), the Maximum Principle (Lemma~\ref{lem:quad-infinite:pt:max-principle}) implies that $f$ is constant on $\Omega$, i.e.,
	\begin{align}
		f(\lambda) = c,
		\qquad \forall \lambda \in \mathbb{C} \setminus [m, M]\,.\label{eq:prop:arcsine:opt-implies-equalize:2}
	\end{align}
	Fix any $\lambda \in [m,M]$ and let $\{z_n\}_{n \in \mathbb{N}}$ be any sequence in $\mathbb{C} \setminus [m,M]$ converging to $\lambda$. Note that $\phi_{\nu}(\lambda) \leq \liminf_{n \to \infty} \phi_{\nu}(z_n)$ by lower semicontinuity (Lemma~\ref{lem:quad-infinite:pt:potential-harmonic}), and $\phi_{\mu} (\lambda) = \limn \phi_{\mu} (z_n)$ by continuity of the equilibrium potential (Lemma~\ref{lem:quad-infinite:pt:continuity-principle}). Thus by~\eqref{eq:prop:arcsine:opt-implies-equalize:2},
	\begin{align*}
            f(\lambda) \geq \limsup_{n \to \infty} f(z_n) = c\,, \qquad \forall \lambda \in [m,M].
	\end{align*}
	Combining this with~\eqref{eq:prop:arcsine:opt-implies-equalize:1} shows that
	\begin{align*}
		f(\lambda) = c\,, \qquad \forall \lambda \in [m, M].
	\end{align*}
	Thus $\nu$ has constant logarithmic potential $\phi_{\nu}(\lambda) = \phi_{\mu}(\lambda) - c$ on $[m,M]$, and therefore by the converse to Frostman's Theorem (Lemma~\ref{lem:pt:frostman-converse}), it must be the equilibrium distribution, a.k.a., $\nu = \ArcmM$. We conclude that the Arcsine distribution is the unique optimal solution for the stepsize problem~\eqref{eq:extremal-app}. The optimal value then follows from Lemma~\ref{lem:arc-equalization}. 
\end{proof}

\section{Deferred proofs}\label{app:deferred}

\subsection{Alternative intuitive proof of Lemma~\ref{lem:arc-equalization} without potential theory}\label{app:equalizing-alternative}

For pedagogical reasons and to provide further intuition for the equalizing property (Lemma~\ref{lem:arc-equalization}), here we present a short semi-rigorous proof that uses only elementary calculus (c.f., the rigorous proof using potential theory in Appendix~\ref{app:pot}). Begin by re-parameterizing: $\beta \sim \ArcmM$ has distribution $\beta = c + d \cos \theta$ where $\theta \sim \mathrm{Unif}(0,\pi)$, $c := \tfrac{M+m}{2}$, $d := \tfrac{M-m}{2}$. 
% The integral in question is then
The desired integral is then
\[
    F(\lambda) := \frac{1}{\pi} \int_0^{\pi} \log \left|1 - \frac{\lambda}{c  + d \cos \theta}\right| d\theta\,.
\]

\paragraph*{Step 1: equalization.} To prove $F$ is constant on $[m,M]$, we argue that $F'$ is zero on $(m,M)$. Forgoing technical details about swapping the derivative and integral, $F'$ is calculated to be 
\begin{align}
    F'(\lambda)
    =
   \frac{1}{\pi} \int_0^{\pi} \frac{d}{d\lambda} \log \left|1 - \frac{\lambda}{c  + d \cos \theta}\right| d\theta
   =
   - \frac{1}{\pi} \mathrm{PV} \int_0^{\pi} \frac{d\theta}{c + d\cos \theta - \lambda} \,.
\end{align}
Here the principal value arises from the simple pole at the value of $\theta$ where $ c+ d \cos \theta = \lambda$. This vanishes since it is well-known\footnote{This can be proved with the substitution $t = \tan(\theta/2)$. Indeed, after some algebra, this substitution gives $\mathrm{PV} \int_0^{\pi} \frac{d\theta}{a + b\cos \theta} = \mathrm{PV}\int_0^{\infty} \frac{2dt}{P-Qt^2}$ where $P := a+b$ and $Q := b-a$ and the simple pole is now at $t_0 = \sqrt{\tfrac{P}{Q}}$. The antiderivative $\int \frac{dt}{P-Qt^2} = \frac{1}{2\sqrt{PQ}} \log |\tfrac{\sqrt{P} + \sqrt{Q}t}{\sqrt{P}-\sqrt{Q}t}| := g(t)$, hence the PV integral is $\lim_{\eps \searrow 0} (g(t_0 - \eps) - g(0) + g(\infty) - g(t_0 + \eps)) = 0$.}   that, more generally,
\begin{align*}
    \mathrm{PV} \int_0^{\pi} \frac{d\theta}{a + b\cos \theta}  = 0
\end{align*}
for any $|a| < |b|$ and $b> 0$. (For us, $a = c - \lambda$ and $b = d$.) 

\paragraph*{Step 2: value.} Since $F$ is constant, we can evaluate at the midpoint $\lambda = c = (M+m)/2$. Compute
\begin{align*}
     F(\lambda)
     &= \frac{1}{\pi} \int_0^{\pi} \log \Big| 1 - \frac{c}{c + d \cos \theta} \Big| d \theta
     = 
     \frac{1}{\pi} \int_0^{\pi} \Big[ \log d + \log |\cos \theta| - \log (c + d\cos \theta) \Big] d\theta
     \\ &= 
     \log d - \log 2 - \log \frac{c + \sqrt{c^2-d^2}}{2}
     = \log \frac{\sqrt{\kappa}-1}{\sqrt{\kappa}+1}\,.
\end{align*}
Above, the first step is by definition of $F$, the second step is by expanding the logarithm, and the final step is by collecting the logarithms and simplifying. It remains to justify the three integral calculations in the penultimate step. The first integral is trivial as the integrand $\log d$ is constant in $\theta$; the second and third are due to the standard integral formulas~\citep[4.224(7) and 4.224(9)]{GR}.

\subsection{Equivalence of separability and commutative Hessians}\label{app:sep-commute}

Here we prove an alternative characterization of separability. We remark that this provides a simple way to certify that a function is not separable: it suffices to exhibit points $x$ and $y$ for which the Hessians $\nabla^2 f(x)$ and $\nabla^2 f(y)$ do not commute.

\begin{lemma}[Equivalence of separability and commutative Hessians]\label{lem:sep-commute}
	Let $f \in C^2(\R^d)$.
	Then the following are equivalent:
	\begin{itemize}
		\item [(i)] 
		$f(x) = \sum_{i=1}^d f_i([y]_i)$ where $y = Ux$ for some orthogonal $U$ and functions $f_i : \R \to \R$.
		\item [(ii)] 
		$\{\nabla^2 f(x)\}_{x \in \R^d}$ commute.
	\end{itemize}
\end{lemma}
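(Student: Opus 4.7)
I would prove the two implications separately; the forward direction is a direct chain-rule computation, and the reverse splits into a simultaneous-diagonalization step followed by an integration step.

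For $(i)\Rightarrow(ii)$, set $h(y):=\sum_{i=1}^d f_i(y_i)$, so that $f=h\circ U$. The chain rule gives $\nabla^2 f(x)=U^T\nabla^2 h(Ux)U=U^T D(Ux)U$, where $D(y):=\operatorname{diag}(f_1''(y_1),\dots,f_d''(y_d))$. Since diagonal matrices commute, any two matrices of the form $U^T D_1 U$ and $U^T D_2 U$ also commute, which yields $(ii)$.

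For $(ii)\Rightarrow(i)$, the first step is to produce a single orthogonal matrix $U$ that simultaneously diagonalizes \emph{every} $\nabla^2 f(x)$. This is the classical fact that any family of pairwise-commuting real symmetric matrices is simultaneously diagonalizable, and the standard inductive argument goes through even for our (possibly uncountable) family: if every $\nabla^2 f(x)$ is a scalar multiple of the identity, any orthonormal basis works; otherwise pick some $\nabla^2 f(x_0)$ with a nontrivial eigenspace decomposition $\R^d=\bigoplus_\lambda V_\lambda$, observe that commutativity forces every other $\nabla^2 f(x)$ to preserve each $V_\lambda$, and recurse inside each $V_\lambda$. Because each refinement strictly refines an orthogonal direct-sum decomposition of $\R^d$, the process terminates in at most $d$ steps, producing a single orthogonal $U$ with $U\nabla^2 f(x)U^T$ diagonal for all $x$.

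The second step is an integration argument. Define $h(y):=f(U^T y)$, so that $\nabla^2 h(y)=U\nabla^2 f(U^T y)U^T$ is diagonal for every $y\in\R^d$. In particular $\partial_{y_i}\partial_{y_j}h\equiv 0$ for $i\neq j$, so $\partial_{y_i}h$ depends only on $y_i$; call this function $g_i$. Setting $F_i(y_i):=\int_0^{y_i}g_i(t)\,dt$, the function $h(y)-\sum_i F_i(y_i)$ has vanishing gradient on the connected set $\R^d$ and hence equals some constant $C$; absorbing $C$ into $F_1$ gives $h(y)=\sum_i f_i(y_i)$. Translating back, $f(x)=h(Ux)=\sum_i f_i([Ux]_i)$, which is $(i)$. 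The only conceptually subtle point is Step 1 — handling possibly varying eigenvalue multiplicities across the infinite family — but the termination-in-$d$-steps observation above reduces it to the finite case.
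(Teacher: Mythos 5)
Your proof is correct and takes essentially the same route as the paper: chain rule for $(i)\Rightarrow(ii)$, and simultaneous diagonalization followed by ``vanishing mixed partials implies separable'' for $(ii)\Rightarrow(i)$. The paper states the simultaneous-diagonalization step and the integration step as one-liners, whereas you spell out the two technical points the paper glosses over (how to get a single $U$ for a possibly uncountable commuting family via the finite refinement argument, and how exactly the $f_i$ are produced by integrating $\partial_{y_i}h$); both elaborations are right and are the standard ways to fill those gaps.
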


\par \noindent \emph{Proof of (i) $\Longrightarrow$ (ii).} The function $h(y) := f(U^Ty) = \sum_{i=1}^d f_i([y]_i)$ is separable, hence its Hessian $\nabla^2 h$ is diagonal. The chain rule implies $\nabla^2 f = U^T \nabla^2 h U$, thus the Hessians of $f$ are simultaneously diagonalizable in the basis $U$, hence they commute. 
\\ \\ \noindent \emph{Proof of (ii) $\Longrightarrow$ (i).} Since the Hessians commute, they are simultaneously diagonalizable, i.e., there exists an orthogonal matrix $U$ such that $\nabla^2 f(x) = U^T D(x) U$ for all $x$, where $D(x)$ is diagonal. Defining $h(y) := f(U^Ty)$, it follows by the chain rule that $\nabla^2 h(y) = D(U^Ty)$ is diagonal. Thus all mixed partial derivatives vanish, hence $\tfrac{\partial}{\partial [y]_i}h(y)$ is a function only of $[y]_i$.
Integrating implies $h(y) = \sum_{i=1}^d f_i([y]_i)$ for some functions $f_i$. Thus $f(x) = h(Ux) = \sum_{i=1}^d f_i([Ux]_i)$ as desired.

\subsection{Proof of Lemma~\ref{lem:L1-martingale}}\label{app-pf:lem:L1-martingale}

We first isolate a useful identity: the random variables $Z_t = \log |1 - \beta_t^{-1} \lambda_t|$ defined in \S\ref{ssec:sep:uni} are uncorrelated---despite not being independent. Intuitively, this is due to the equalization property of the Arcsine distribution (from which $\beta_t^{-1}$ are independently drawn) since it ensures that each $Z_t$ has the same (conditional) expectation regardless of the dependencies injected through the non-stationary process $\lambda_t$.

\begin{lemma}\label{lem:cov}
	$\Cov(Z_s, Z_t) = 0$ for all $s \neq t$.
\end{lemma}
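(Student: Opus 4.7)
The plan is to mirror the conditional-expectation calculation from equation~\eqref{eq:sep:uni:main:expectation}, but applied to the product $Z_s Z_t$ rather than to $Z_t$ alone. Without loss of generality, suppose $s < t$, and let $\cF_{t-1} := \sigma(\beta_0,\dots,\beta_{t-1})$ denote the natural filtration generated by the inverse stepsizes up through time $t-1$.

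The key structural observation is that conditional on $\cF_{t-1}$, the variable $Z_s = \log|1 - \beta_s^{-1}\lambda_s|$ is completely determined (since $s \leq t-1$ and $\lambda_s$ depends only on $\beta_0,\dots,\beta_{s-1}$), and the scalar $\lambda_t \in [m,M]$ is also determined (since it depends only on $\beta_0,\dots,\beta_{t-1}$). The only remaining randomness in $Z_t$ comes from $\beta_t$, which is drawn fresh from $\ArcmM$ and is independent of $\cF_{t-1}$. Therefore, the equalization property (Lemma~\ref{lem:arc-equalization}) applies inside the conditional expectation and yields
\begin{align*}
    \E\!\left[ Z_t \,\big|\, \cF_{t-1} \right]
    \;=\; \E_{\beta_t \sim \ArcmM}\!\left[ \log\abs{1 - \beta_t^{-1}\lambda_t} \,\Big|\, \cF_{t-1} \right]
    \;=\; \log \Racc\,.
\end{align*}

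Pulling out the $\cF_{t-1}$-measurable factor $Z_s$ and then using the tower property gives
\begin{align*}
    \E[Z_s Z_t]
    \;=\; \E\!\left[ Z_s \cdot \E[Z_t \mid \cF_{t-1}] \right]
    \;=\; \E[Z_s] \cdot \log \Racc
    \;=\; \E[Z_s]\,\E[Z_t]\,,
\end{align*}
where the last equality invokes~\eqref{eq:sep:uni:main:expectation}, which already established $\E[Z_t] = \log \Racc$ for every $t$. Subtracting gives $\Cov(Z_s, Z_t) = 0$, completing the proof.

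I do not anticipate a serious obstacle here: the entire argument is a short tower-property calculation, and every ingredient has already been developed in the course of proving the univariate case of Theorem~\ref{thm:sep:main}. The only mild subtlety is being careful that $\lambda_t$, despite being data-dependent and possibly adversarial, remains $\cF_{t-1}$-measurable, so that the Arcsine equalization identity is legitimately invoked pointwise inside the conditional expectation --- this is exactly the same point that made the argument at~\eqref{eq:sep:uni:main:expectation} go through.
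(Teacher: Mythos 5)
Your proof is correct and takes essentially the same approach as the paper: both condition on $\cF_{t-1}=\sigma(\beta_0,\dots,\beta_{t-1})$, apply the tower property, and invoke the equalization property (Lemma~\ref{lem:arc-equalization}) to get $\E[Z_t\mid\cF_{t-1}]=\log\Racc$. The only cosmetic difference is that the paper centers the variables and computes $\E[(Z_s-\log\Racc)(Z_t-\log\Racc)]=0$ directly, whereas you compute $\E[Z_s Z_t]=\E[Z_s]\,\E[Z_t]$ and subtract.
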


\begin{proof}
	Suppose $s < t$ without loss of generality. Then
	$\Cov(Z_s, Z_t)
	=
	\E_{\beta_0, \dots, \beta_t} [(Z_s - \log \Racc) (Z_t - \log \Racc)]
	=
	\E_{\beta_0, \dots, \beta_{t-1}} [ \E_{\beta_t} [ 
	(Z_s - \log \Racc) (Z_t - \log \Racc) \, | \, \beta_0, \dots, \beta_{t-1} ]]
	=
	0$, where 
	the first step is by the definition of covariance and~\eqref{eq:sep:uni:main:expectation}, the second step is by the Law of Iterated Expectations, and the third step is because $\E_{\beta_t}[Z_t \; | \; \beta_0, \dots, \beta_{t-1}] = \log \Racc$ by the equalization property of the Arcsine distribution (Lemma~\ref{lem:arc-equalization}).
\end{proof}

\begin{proof}[Proof of Lemma~\ref{lem:L1-martingale}]
	First we check that $\{M_n\}_{n \in \mathbb{N}}$ is indeed a martingale:
	$$
	\E[M_{n} \; | \; F_{n-1}]
	=
	M_{n-1} + \frac{\E\left[Z_{n-1} - \E[Z_{n-1}] \; \Big| \; F_{n-1} \right]}{n}
	=
	M_{n-1}.
	$$
	The final step uses $\E[Z_{n-1} | F_{n-1} ] = \E[Z_{n-1}]$, which follows by an identical argument as in~\eqref{eq:sep:uni:main:expectation}.
	\par Next, we show $\{M_n\}_{n \in \mathbb{N}}$ is $L_1$-bounded. By the Cauchy-Schwarz inequality,
	$$
	\E|M_n| \leq \sqrt{\E[M_n^2]} = \sqrt{\Var(M_n)}
	$$
	Thus it suffices to show that $\Var(M_n)$ is uniformly bounded. To this end, since $\Cov(Z_s, Z_t) = 0$ for any $s \neq t$ by Lemma~\ref{lem:cov}, it follows that
	$$
	\Var(M_n)
	=
	\Var\left(\sum_{t=0}^{n-1} \frac{Z_t}{t+1}\right)
	=
	\sum_{t=0}^{n-1} \Var\left(\frac{Z_t}{t+1}\right)
	\leq
	V\sum_{t=1}^{n}  \frac{1}{t^2}
	\leq 
	V\sum_{t=1}^{\infty} \frac{1}{t^2}
	=
	V \frac{\pi^2}{6}
	$$
	where $V := \sup_{\lambda \in [m, M]} \Var_{\beta \sim \ArcmM}(\log \abs{ 1 - \beta^{-1}\lambda})$ is a uniform upper bound on the variance of $Z_t$. Note that $V < \infty$ because the logarithmic potential is an integrable singularity (see also Appendix 5 of~\citep{kalousek} for quantitative bounds on $V$). Thus $M_n$ has finite variance, as desired.
\end{proof}

    \subsection{Details for \S\ref{ssec:discussion:notions}}\label{app:notions}

	Here we observe that random stepsizes can make GD unstable in alternative notions of convergence. 
	Instability in the following lemma occurs for $\kappa > 4$ since then $\sqrt{\kappa} - 1 > 1$. 
	See \S\ref{ssec:discussion:notions} for a discussion of this phenomenon and why the notion of convergence studied in the rest of this paper is more relevant to optimization practice.
	
	\begin{lemma}[Instability in alternative notions of convergence]\label{lem:stability}
		Consider GD with i.i.d. inverse stepsizes from $\ArcmM$. There is an $m$-strongly convex, $M$-smooth, quadratic function $f$ such that for any initialization $x_0 \neq x^*$ and any number of iterations $n$,
		\begin{align}
			\E\Bigg[\frac{\|x_{n} - x^*\|}{\|x_0 - x^*\|} \Bigg]^{1/n} \geq
			\sqrt{\kappa} - 1\,.
			\label{eq:notion-bad}
		\end{align}
	\end{lemma}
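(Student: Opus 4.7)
}

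The plan is to reduce to the simplest possible univariate quadratic and extract a closed form for the expected contraction at a single, carefully chosen curvature. Specifically, pick $d=1$ and take $f(x)=\tfrac{M}{2}(x-x^{*})^{2}$, which is $m$-strongly convex and $M$-smooth (since $m\leq M$). The reason this is the ``right'' worst-case choice will become clear below: $\lambda=M$ is precisely the curvature at which a stepsize $\beta_{t}^{-1}$ near the upper edge of the support causes $|1-\beta_{t}^{-1}\lambda|$ to be large, and it is also the curvature for which the absolute value sign can be removed in closed form.

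The first step is to write GD in closed form. Since $\nabla f(x)=M(x-x^{*})$, the GD update with inverse stepsizes $\beta_{t}$ gives $x_{t+1}-x^{*}=(1-\beta_{t}^{-1}M)(x_{t}-x^{*})$, and hence
\begin{align*}
\frac{|x_{n}-x^{*}|}{|x_{0}-x^{*}|}
=\prod_{t=0}^{n-1}\bigl|1-\beta_{t}^{-1}M\bigr|.
\end{align*}
Because the $\beta_{t}$ are i.i.d., the factors on the right are i.i.d., so by independence
\begin{align*}
\E\Bigg[\frac{|x_{n}-x^{*}|}{|x_{0}-x^{*}|}\Bigg]
=\bigl(\E_{\beta\sim\ArcmM}|1-\beta^{-1}M|\bigr)^{n},
\end{align*}
and taking the $n$-th root reduces the claim to showing $\E_{\beta\sim\ArcmM}|1-\beta^{-1}M|\geq\sqrt{\kappa}-1$.

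The second step is to compute this single expectation. Since $\beta\in(m,M)$ almost surely, $\beta^{-1}M\geq 1$ almost surely, so $|1-\beta^{-1}M|=M\beta^{-1}-1$ and
\begin{align*}
\E_{\beta\sim\ArcmM}|1-\beta^{-1}M|=M\,\E_{\beta\sim\ArcmM}[\beta^{-1}]-1.
\end{align*}
The inverse-moment of the Arcsine distribution is a textbook computation: substituting $\beta=\tfrac{M+m}{2}+\tfrac{M-m}{2}\cos\theta$ for $\theta\in[0,\pi]$ turns the Arcsine density against Lebesgue measure into $d\theta/\pi$, and then the classical integral
\begin{align*}
\int_{0}^{\pi}\frac{d\theta}{a+b\cos\theta}=\frac{\pi}{\sqrt{a^{2}-b^{2}}},\qquad |a|>|b|,
\end{align*}
with $a=\tfrac{M+m}{2}$ and $b=\tfrac{M-m}{2}$ yields $\E[\beta^{-1}]=1/\sqrt{Mm}$. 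Plugging back in gives $\E|1-\beta^{-1}M|=\sqrt{M/m}-1=\sqrt{\kappa}-1$, which establishes~\eqref{eq:notion-bad} with equality.

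There is no real obstacle here; everything is just a direct calculation once one hits on the right $\lambda$. The only mild technical point is justifying the removal of the absolute value (ensured by $\beta\in(m,M)$ a.s.\ and $\lambda=M$), and recalling the closed form of the inverse moment of the Arcsine distribution. The conceptual content is that this choice of $\lambda$ exactly saturates the optimal rate for the ``arithmetic-mean'' notion of convergence on the right side of~\eqref{eq:notions}, in stark contrast to the ``geometric-mean'' notion on the left, which converges at the accelerated rate $\Racc=(\sqrt{\kappa}-1)/(\sqrt{\kappa}+1)$ via the equalization property---precisely the discrepancy discussed in \S\ref{ssec:discussion:notions}.
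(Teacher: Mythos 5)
Your proof is correct, and it tracks the paper's argument closely: same test function $f(x)=\tfrac{M}{2}(x-x^*)^2$, same reduction to a single-step expectation via independence, and same evaluation $\E_{\beta\sim\ArcmM}[\beta^{-1}]=1/\sqrt{Mm}$ (the paper packages this as Lemma~\ref{lem:inverse}, derived from an integral representation of the geometric mean; your direct trigonometric substitution is equivalent). The one genuine difference is the final step. The paper takes the expectation of the \emph{signed} product $\prod_t(1-\beta_t^{-1}M)$, uses independence to get $\E[x_n]=(1-\sqrt{\kappa})^n x_0$, and then invokes Jensen's inequality $\E\|x_n\|\geq\|\E x_n\|$ to reinsert the absolute value and obtain the stated lower bound. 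You instead observe that at $\lambda=M$ the sign of $1-\beta^{-1}M$ is deterministic (since $\beta\leq M$ a.s.\ forces $\beta^{-1}M\geq 1$), so the absolute value can be stripped off \emph{before} taking expectations, and the i.i.d.\ product factorizes exactly. This sidesteps Jensen entirely and, as a bonus, upgrades the paper's inequality to an exact equality for this $f$ — showing not only that the alternative notion of convergence can exceed $1$, but that the bound $\sqrt{\kappa}-1$ is attained. That is a modest but real improvement over the paper's argument.
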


	To prove this, we make use of the following integral representation of the geometric mean of two positive scalars $a, b > 0$ in terms of the weighted harmonic mean $H_{t}(a,b) := (t a^{-1} + (1-t )b^{-1})^{-1}$. This identity is well-known (e.g., it follows from~\citep[Exercise V.4.20]{bhatia2013matrix}, 
    or directly from \citep[\S3.121-2]{GR} by taking $q = 1/b$ and $p = 1/b-1/a$).

	\begin{lemma}[Arcsine integral representation of the geometric mean]\label{lem:whm}
		For any $a, b > 0$,
		\begin{align*}
			\E_{t \sim \text{\emph{Arcsine(0,1)}}} \big[ H_{t}(a,b) \big] = \sqrt{ab}\,.
		\end{align*}
	\end{lemma}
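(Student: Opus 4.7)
The plan is to reduce the claim to a standard trigonometric integral. I would begin by rewriting $H_t(a,b) = ab/(tb + (1-t)a)$ and inserting the explicit Arcsine density to express the expectation as
\begin{align*}
\E_{t \sim \Arc{0}{1}}[H_t(a,b)] = \frac{1}{\pi}\int_0^1 \frac{ab}{tb + (1-t)a} \cdot \frac{dt}{\sqrt{t(1-t)}}\,.
\end{align*}

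Next I would perform the canonical substitution $t = (1-\cos\theta)/2$ for $\theta \in [0,\pi]$, which turns the Arcsine measure into the uniform measure on $[0,\pi]$. One checks directly that $\sqrt{t(1-t)} = (\sin\theta)/2$ and $dt = ((\sin\theta)/2)\, d\theta$, so $dt/\sqrt{t(1-t)} = d\theta$. Since $tb + (1-t)a = (a+b)/2 + ((a-b)/2)\cos\theta$, the expectation becomes
\begin{align*}
\E_{t \sim \Arc{0}{1}}[H_t(a,b)] = \frac{2ab}{\pi}\int_0^\pi \frac{d\theta}{(a+b) + (a-b)\cos\theta}\,.
\end{align*}

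Finally I would invoke the classical integral $\int_0^\pi \frac{d\theta}{A + B\cos\theta} = \frac{\pi}{\sqrt{A^2 - B^2}}$, valid for $A > |B|$ and derivable either by the Weierstrass tangent half-angle substitution or by residues. Taking $A = a+b$ and $B = a-b$ gives $A^2 - B^2 = 4ab$, so the expression collapses to $\frac{2ab}{\pi} \cdot \frac{\pi}{2\sqrt{ab}} = \sqrt{ab}$, as desired.

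This is a routine calculation once the substitution is spotted, so there is no real obstacle; the only thing to keep straight is the positivity $A > |B|$, which follows immediately from $a,b > 0$. As an alternative route that ties in more directly with the potential-theoretic framework of Appendix \ref{app:pot}, one could observe that $H_t(a,b)$ is an affine-rational function of $s := 1-2t$, which is Arcsine-distributed on $[-1,1]$, and then apply the Cauchy/Stieltjes transform identity $\E_{s \sim \Arc{-1}{1}}[1/(z-s)] = 1/\sqrt{z^2-1}$ for $z \notin [-1,1]$. This identity is (up to sign) the $z$-derivative of the logarithmic-potential formula in Fact \ref{fact:eq}, so it would derive the lemma from machinery already established in the paper rather than from a tabulated integral.
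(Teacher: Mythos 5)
The paper does not actually prove this lemma; it simply cites it as a known identity (pointing to Bhatia's \emph{Matrix Analysis} and Gradshteyn--Ryzhik). Your direct derivation is correct and self-contained, so it fills in what the paper leaves to references. The substitution $t=(1-\cos\theta)/2$ correctly maps $\Arc{0}{1}$ to the uniform measure on $[0,\pi]$, the algebra giving $tb+(1-t)a = \tfrac{a+b}{2}+\tfrac{a-b}{2}\cos\theta$ is right, and the cited formula $\int_0^\pi \tfrac{d\theta}{A+B\cos\theta}=\tfrac{\pi}{\sqrt{A^2-B^2}}$ (valid since $A=a+b > |a-b|=|B|$) immediately yields $\sqrt{ab}$. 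This is essentially the same computation that the cited table entry encodes, just carried out explicitly; it buys the reader a proof instead of a reference. Your sketched alternative via the Stieltjes transform $\E_{s\sim\Arc{-1}{1}}[1/(z-s)]$ is a nice idea that would tie the lemma to the potential-theoretic machinery of Appendix~\ref{app:pot}, but if you wanted to flesh it out you would need to be careful with signs and branch choices when differentiating the logarithmic-potential identity and when writing $H_t(a,b)$ in the form $c/(z-s)$, since $z=-(a+b)/(a-b)$ can lie on either side of $[-1,1]$ depending on whether $a>b$ or $a<b$; the direct trigonometric route avoids these bookkeeping issues entirely.
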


	This lets us compute the expectation of the proposed random stepsizes.
	
	\begin{lemma}[Expectation of proposed random stepsizes]\label{lem:inverse}
		For any $0 < m < M < \infty$,
		\begin{align*}
			\E_{\beta \sim \ArcmM} \big[ \beta^{-1} \big] 
			= \frac{1}{\sqrt{Mm}} \,.
		\end{align*}
	\end{lemma}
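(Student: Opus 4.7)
The plan is to reduce this computation directly to the integral identity in Lemma~\ref{lem:whm} via a linear change of variables on the Arcsine distribution. The first step is to use the fact that the Arcsine family is closed under affine transformations: if $t \sim \text{Arcsine}(0,1)$, then $\beta := m + (M-m)t = (1-t)m + tM$ has law $\Arc{m}{M}$. This follows from a direct change of variables in the density $\tfrac{1}{\pi\sqrt{t(1-t)}}$, or equivalently from the trigonometric parametrization $t = \tfrac{1}{2}(1+\cos\Theta)$ with $\Theta$ uniform on $[0,\pi]$. Consequently, $\E_{\beta \sim \Arc{m}{M}}[\beta^{-1}] = \E_{t \sim \text{Arcsine}(0,1)}\bigl[((1-t)m + tM)^{-1}\bigr]$.

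The second step is the key algebraic observation: the integrand is, up to a multiplicative constant, the weighted harmonic mean appearing in Lemma~\ref{lem:whm}. Expanding the definition $H_t(a,b) = (t/a + (1-t)/b)^{-1} = ab/(tb + (1-t)a)$ and setting $a = m$, $b = M$ yields $H_t(m,M) = mM/(tM + (1-t)m)$, so that
\[
    \frac{1}{(1-t)m + tM} = \frac{1}{tM + (1-t)m} = \frac{H_t(m,M)}{mM}.
\]
Taking expectations and invoking Lemma~\ref{lem:whm} gives $\E[\beta^{-1}] = \E_t[H_t(m,M)]/(mM) = \sqrt{mM}/(mM) = 1/\sqrt{mM}$, as claimed.

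I do not foresee any substantive obstacle; the entire proof is essentially two lines once the right substitution is made. The only point requiring a modicum of care is matching the labels $(a,b)$ in $H_t(a,b)$ with $(m,M)$ so that the convex combination $(1-t)m + tM$ lines up with the denominator of the harmonic mean; if the labeling came out reversed, one could invoke the symmetry $t \leftrightarrow 1-t$, which preserves $\text{Arcsine}(0,1)$, to swap the roles. All the analytic work is absorbed into Lemma~\ref{lem:whm}, whose proof the paper already references.
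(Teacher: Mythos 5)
Your proof is correct and follows essentially the same route as the paper's: both re-parametrize $\beta$ via $t \sim \mathrm{Arcsine}(0,1)$ and reduce to Lemma~\ref{lem:whm}. The only cosmetic difference is that the paper applies the lemma with $(a,b) = (M^{-1}, m^{-1})$ so that $\beta^{-1} = H_t(M^{-1},m^{-1})$ directly, whereas you apply it with $(a,b)=(m,M)$ and pull out a factor of $1/(mM)$; these are trivially equivalent.
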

	\begin{proof} 
		Re-parameterize $\beta = t M + (1-t) m$ so that $\beta^{-1} = H_t(M^{-1}, m^{-1})$. Since $\beta \sim \mathrm{Arcsine}(m,M)$ implies $t \sim \mathrm{Arcsine}(0,1)$, applying Lemma~\ref{lem:whm} provides the desired identity
		\begin{align*}
			\E_{\beta \sim \ArcmM} \big[ \beta^{-1} \big]
			= \E_{t \sim \mathrm{Arcsine}(0,1)} \big[ H_{t}(M^{-1}, m^{-1}) \big]
			=
			\frac{1}{\sqrt{Mm}}\,.
		\end{align*}
	\end{proof}

	We are now ready to prove Lemma~\ref{lem:stability}. 
 
    \begin{proof}[Proof of Lemma~\ref{lem:stability}]
        Let $\beta_t$ denote the inverse stepsizes; recall that these are i.i.d.\ $\ArcmM$. By expanding the definition of GD for the function $f(x) = \tfrac{M}{2}x^2$,
	\begin{align*}
		x_{n} = \prod_{t=0}^{n-1} (1 - \beta_t^{-1} M) x_0
		\,.
	\end{align*}
	By taking expectations, using independence of the stepsizes, and applying Lemma~\ref{lem:inverse},
	\begin{align*}
		\E [x_{n}] = (1 - \sqrt{\kappa})^n x_0
		\,.
	\end{align*}
	Thus by Jensen's inequality and then the fact that $\kappa > 1$,
	we conclude the desired inequality
	\[
	\E \Bigg[  \frac{\|x_{n} - x^*\|}{\|x_0 - x^*\|}  \Bigg]^{1/n}
	\geq
	\Bigg( \frac{\| \E x_{n} - x^*\|}{\|x_0 - x^*\|}  \Bigg)^{1/n}\
	= \sqrt{\kappa} - 1.
	\]
    \end{proof}

\subsection{Proof of Theorem~\ref{thm:sep:hp}}\label{app:hp}

For shorthand, let $V := \sup_{\lambda \in [m,M]} \Var_{\beta \sim \ArcmM} \left( \log \abs{1 - \beta^{-1} \lambda} \right)$ denote a uniform upper bound on the variance of any $Z_t$. By the integral calculations in~\citep[Appendix 5]{kalousek},
\begin{align*}
	V \leq \pi^2 + o_{\kappa}(1)\,.
\end{align*}
Thus because $\Cov(Z_s, Z_t) = 0$ for $s \neq t$ by Lemma~\ref{lem:cov}, 
\begin{align*}
	\Var\left( \frac{1}{n} \sum_{t=0}^{n-1} Z_t \right)
	=
	\frac{1}{n^2}\sum_{t=0}^{n-1} \Var\left( Z_t \right) 
	\leq 
	\frac{V}{n}
	\leq \frac{\pi^2 + o_{\kappa}(1)}{n}\,.
\end{align*}
Thus by Chebyshev's inequality and the equalization property of the Arcsine distribution (Lemma~\ref{lem:arc-equalization}),
\begin{align}
	\Prob\left( \frac{1}{n} \sum_{t=0}^{n-1} \log|1 - \beta_t^{-1} \lambda_t| \geq \log \Racc + \eps \right) \leq \frac{\pi^2 + o_{\kappa}(1)}{n \eps^2}\,,
\end{align}
where $\beta_t$ are i.i.d.\ $\ArcmM$ and $\lambda_t \in [m,M]$ is any process adapted to the filtration $\sigma(\{\beta_s\}_{s < t})$.
Thus by~\eqref{eq:rate-main-proof}, the average convergence rate 
$R_{n,i} := (\tfrac{| [x_{n}]_i - [x^*]_i | }{ | [x_0]_i - [x^*]_i|})^{1/n}$
for the function $f_i$ satisfies
\begin{align}
	\Prob\left( R_{n,i} \geq e^{\eps} \, \Racc \right) \leq \frac{\pi^2 + o_{\kappa}(1)}{n \eps^2}
\end{align}
for any coordinate $i \in S := \{i \in [d] : [x_0]_i \neq [x^*]_i\}$, i.e., for any $i$ for which the initialization is not already optimal. Thus the overall convergence rate
$R_n := (\tfrac{\|x_{n} - x^*\| }{ \|x_0 - x^*\|})^{1/n}$
for $f$ satisfies
\begin{align}
	\Prob\left(R_n \geq e^{\eps} \, \Racc \right) \leq
	\Prob\left(\max_{i \in S} R_{n,i} \geq e^{\eps} \, \Racc \right)
	\leq 
	\frac{d(\pi^2 + o_{\kappa}(1))}{n \eps^2}\,.
\end{align}
Above the first step is because $R_n \leq \max_{i \in S} R_{n,i}$ (since $R_n^2 = \tfrac{\sum_{i \in S} |[x_{n}]_i - [x^*]_i|^2}{\sum_{i \in S} |[x_{0}]_i - [x^*]_i|^2} \leq \max_{i \in S} \tfrac{|[x_{n}]_i - [x^*]_i|^2}{ |[x_{0}]_i - [x^*]_i|^2} = \max_{i \in S} R_{n,i}^2$). The second step is by a union bound and the previous display. The proof is then complete by setting $\delta$ equal to the right hand side of the above display.

	\subsection{Details for \S\ref{ssec:discussion:inexact}}\label{app:inexact}
	\begin{proof}[Proof of Theorem~\ref{thm:inexact}]
		The proof is similar to that in \S\ref{ssec:sep:uni}, so we just highlight the differences. Denote $\tilde{\lambda}_t := \tilde{g}_t / (x_t - x^*)$. Then
		\[
		x_{t+1} - x^*
		= x_t - \alpha_t \tilde{g}_t - x^*
		= (1 - \alpha_t \tilde{\lambda}_t) (x_t - x^*)\,,
		\]
		thus 
		\[
		x_n - x^* = \prod_{t=0}^{n-1} (1 - \alpha_t \tilde{\lambda}_t) (x_0 - x^*)\,,
		\]
		and so in particular 
		\[
		\log \left( \frac{|x_n - x^*|}{|x_0 - x^*|}\right)^{1/n}
		= 
		\frac{1}{n} \sum_{t=0}^{n-1} \log \abs{ 1 - \alpha_t \tilde{\lambda}_t }\,. 
		\]
        Now by the multiplicative error assumption~\eqref{eq:inexact} and Lemma~\ref{lem:sep:uni:helper}, $\tilde{m} \leq \tilde{\lambda}_t \leq \tilde{M}$ where $\tilde{m} := (1 - \eps)m$ and $\tilde{M} := (1+\eps)M$. 
		Thus by a similar martingale-based Law of Large Numbers argument as in \S\ref{ssec:sep:uni}, it suffices to compute
		\begin{align}
                      \max_{\tilde{\lambda} \in [\tilde{m},\tilde{M}]} \E_{\beta \sim \ArcmM} \log \abs{1 - \beta^{-1}\tilde{\lambda}}
			\label{eq:pf-inexact:1}
		\end{align}
		and in particular to show that this is equal to the logarithm of the upper bound in~\eqref{eq:thm-inexact}. By the same calculation as in Appendix~\ref{app:arc-equalization}, this expectation simplifies to
		\begin{align}
			\E_{\beta \sim \ArcmM} \log \abs{1 - \beta^{-1} \tilde{\lambda}} = \log \Racc - \log (|z| - \sqrt{z^2 - 1}) \cdot \mathds{1}_{z \notin [-1,1]}\,,
		\end{align}
		where $z := \tfrac{2}{M-m} (\tilde{\lambda} - \tfrac{M+m}{2})$.
        This quantity is increasing in $|z|$, thus~\eqref{eq:pf-inexact:1} is maximized at $\tilde{\lambda} = \tilde{M}$ which corresponds to $|z| = 1+\tfrac{2M\eps}{M-m} = 1+ \tilde{\eps}$. Plugging this into~\eqref{eq:pf-inexact:1} yields the desired bound.
		\par Finally, for tightness, note that the analysis is tight at the aforementioned extreme $\tilde{\lambda} = \tilde{M}$ which corresponds to $f(x) = \tfrac{Mx^2}{2}$, with estimates $\tilde{g}_t$ that maximally overestimate the true gradient.
	\end{proof}

    \begin{proof}[Proof of Theorem~\ref{thm:inexact-reoptimized}]
        By~\eqref{eq:pf-inexact:1} in the proof of  Theorem~\ref{thm:inexact}, the worst-case convergence rate for $m$-strongly convex and $M$-smooth functions with $\eps$-inexact gradients is governed by the convergence rate for $\tilde{m}$-strongly convex and $\tilde{M}$-smooth functions with exact gradients. Thus by Lemmas~\ref{lem:arc-extremal} and~\ref{lem:arc-equalization}, the unique optimal distribution for inverse stepsizes is $\Arc{\tilde{m}}{\tilde{M}}$ which achieves the rate $(\sqrt{\tilde{\kappa}}-1)/(\sqrt{\tilde{\kappa}}+1)$. Tightness follows by a similar argument as in the proof of Theorem~\ref{thm:inexact}. 
    \end{proof}

    % \small
    \footnotesize
	\addcontentsline{toc}{section}{References}
	\bibliographystyle{plainnat}
	\bibliography{hedging}{}

\end{document}